\documentclass[reqno,11pt]{amsart}
\usepackage{amsmath,amssymb,latexsym,soul,cite,mathrsfs,ragged2e}
\usepackage{color,enumitem,graphicx}
\usepackage[colorlinks=true,urlcolor=blue,
citecolor=red,linkcolor=blue,linktocpage,pdfpagelabels,
bookmarksnumbered,bookmarksopen]{hyperref}
\usepackage[english]{babel}
\usepackage[left=2.6cm,right=2.6cm,top=2.9cm,bottom=2.9cm]{geometry}
\usepackage{tikz}
\usepackage{pgfplots}
\usetikzlibrary{patterns, positioning}
\usepackage{mathtools}
\usepackage{xcolor}
\usepackage{comment}
\definecolor{darkgreen}{RGB}{0,100,0}
\newtheorem{theorem}{Theorem}
\newtheorem{lemma}[theorem]{Lemma}

\newtheorem{proposition}[theorem]{Proposition}
\newtheorem{remark}[theorem]{Remark}
\newtheorem{definition}[theorem]{Definition}

\newtheorem{theoremletter}{Theorem}

\newtheoremstyle{tttheorem}
{}                
{}                
{\slshape}        
{}                
{\bfseries}       
{'}               
{ }               
{}                
\theoremstyle{tttheorem}

\def\XXint#1#2#3{{\setbox0=\hbox{$#1{#2#3}{\int}$ }
		\vcenter{\hbox{$#2#3$ }}\kern-.6\wd0}}

\newenvironment{boldromanenum}
{\begin{enumerate}[label=\textbf{(\roman*)}, font=\normalfont]}
{\end{enumerate}}

\title[Minimax solutions for FitzHugh-Nagumo systems]{Minimax solutions for FitzHugh-Nagumo systems with critical exponential Growth}
\author[J.M. do \'O]{Jo\~{a}o Marcos do \'O}
\author[J. C. de Albuquerque]{Jos\'{e} Carlos de Albuquerque}
\author[H. H. C. e Silva]{Hugo H. C. e Silva}

\address[J.M. do \'O]{Department of Mathematics,
	Federal University of Para\'{\i}ba
	\newline\indent 
	58051-900, Jo\~ao Pessoa-PB, Brazil \newline\indent Orcid: 0000-0001-7039-4365}
\email{\href{mailto:jmbo@mat.ufpb.br}{jmbo@mat.ufpb.br}}

\address[J. C. de Albuquerque]{Department of Mathematics,
	Federal University of Pernambuco
	\newline\indent 
	50670-420, Recife-PE, Brazil \newline\indent Orcid: 0000-0003-2273-6054}
\email{\href{mailto: josecarlos.melojunior@ufpe.br}{josecarlos.melojunior@ufpe.br}}

\address[H. H. C. e Silva]{
    Department of Mathematics,
	Federal University of Para\'{\i}ba
	\newline\indent 
	58051-900, Jo\~ao Pessoa-PB, Brazil \newline\indent Orcid: 0009-0007-2312-032X}
\email{\href{mailto:hugo.henryque@ufpe.br}{hugo.henryque@ufpe.br}}

\pgfplotsset{compat=1.18}
\begin{document}

\begin{abstract}
We study a class of FitzHugh-Nagumo type systems with critical exponential growth in a smooth bounded domain of $\mathbb{R}^{2}$. The classical reduction method turns the strongly indefinite system into a scalar nonlocal problem whose quadratic part induces a norm equivalent to, but different from the standard Sobolev norm, so that the Trudinger-Moser threshold must be handled in this framework. We prove the existence of a ground state solution via a mountain pass argument
combined with a Nehari level comparison. We also prove the existence, without imposing the monotonicity condition
required in the ground state argument, of a nontrivial solution obtained
through a saddle-point linking argument based on the spectral decomposition
of $\operatorname{T}=-\Delta+B$. \\

\noindent \textbf{Keywords:} FitzHugh-Nagumo systems; critical exponential growth; Trudinger-Moser inequality; ground state solutions; saddle-point geometry; linking theorem; nonlocal problem; Moser functions.\\
\textbf{MSC (2020):} 35J20; 35J47; 35J50; 35B33; 35Q92.
\end{abstract}

	\maketitle
	
	
	\begin{center}
		\footnotesize
	\end{center}

%

\vspace{-1.0cm}



\section{Introduction}

The FitzHugh-Nagumo system is a classical example of a reaction-diffusion system arising in the mathematical description of excitable media. Its origin goes
back to the classical work of Hodgkin and Huxley \cite{Hodgkin1952Quantitative},
where a quantitative model for the conduction and excitation of nerve impulses was
proposed. Although the Hodgkin-Huxley model is highly successful from the
biophysical point of view, its mathematical structure is rather involved. In order
to retain the essential activator-inhibitor mechanism while obtaining a more
tractable model, FitzHugh \cite{FitzHugh1961Impulse} and, independently, Nagumo,
Arimoto and Yoshizawa \cite{Nagumo1962Active} introduced a simplified system for
the propagation of electrical impulses along nerve axons. In these models, the
first component represents an activator variable, while the second one describes a
recovery or inhibitor variable.

From this point of view, FitzHugh-Nagumo equations may be regarded as a
prototype of reaction-diffusion systems of the form
\begin{equation*}
\begin{cases}
u_t = D_1 \Delta u + g(u) - v, & \text{in } (0,+\infty) \times \Omega,\\
v_t = D_2 \Delta v + \varepsilon(u-\gamma v), & \text{in } (0,+\infty) \times \Omega,
\end{cases}
\end{equation*}
where $\Omega \subset \mathbb{R}^{N}$ is a domain and $D_1,D_2,\varepsilon,\gamma$
are positive parameters. Stationary states, standing waves and related elliptic
systems associated with this model have been widely investigated in several
directions, including bifurcation, stability, concentration phenomena, peak
solutions, multiplicity and variational methods. See, for instance,
\cite{MR0819214,Conley1986Bifurcation,Figueiredo1986Maximum,Dancer2004Minimization,
Dancer2005Multipeak,Dancer2006Interior,Ren2005Nucleation,Wei2005Clustered,
Wei2006Standing,Sweers2003Bifurcation,Chen2014Variational,Alvesfig} and the
references therein.

In the present paper, we study the existence of a ground state solution
and of a nontrivial solution obtained through a saddle-point linking
argument for the following class of FitzHugh-Nagumo type systems
\begin{equation} \label{SC}\tag{$S$}
\begin{cases}
-\Delta u = -\delta v + f(x,u), & \text{in } \Omega, \\
-\Delta v = \delta u - \gamma v, & \text{in } \Omega, \\
u = v = 0, & \text{on } \partial \Omega,
\end{cases}
\end{equation}
where $\Omega$ is a smooth bounded domain in $\mathbb{R}^{2}$, $\delta,\gamma > 0$ and $f$ has critical exponential growth in the Trudinger-Moser sense.

A first difficulty in studying system \eqref{SC} is that the natural associated energy
functional is strongly indefinite. Its quadratic part
has opposite signs in the variables $u$ and $v$, and therefore neither direct
minimization nor the standard mountain pass argument can be applied to the
system in a straightforward way. A method for overcoming this difficulty was introduced by de Figueiredo
and Mitidieri \cite{Figueiredo1986Maximum} in the analysis of elliptic systems of
FitzHugh-Nagumo type. The idea is to solve the second equation of the system with
respect to $v$ and substitute this expression into the first equation. In this
way, the original system is reduced to a scalar elliptic equation with a nonlocal
term. More precisely, since $\gamma>0$, the bilinear form
\[
\operatorname{\sigma}(v,\varphi)=\int_{\Omega}\nabla v\nabla\varphi\, \mathrm{d}x
+\gamma\int_{\Omega}v\varphi\, \mathrm{d}x
\]
is coercive on $H_0^1(\Omega)$. Hence, by the Lax-Milgram theorem, the operator
$(-\Delta+\gamma)$, endowed with homogeneous Dirichlet boundary conditions, is invertible in the weak sense. Thus the second equation gives
\begin{equation*}
v=\delta(-\Delta + \gamma)^{-1}u.    
\end{equation*}
Consequently,
\begin{equation*}
\delta v=\delta^2(-\Delta + \gamma)^{-1}u \coloneqq B[u],    
\end{equation*}
and the system \eqref{SC} can be rewritten as the nonlocal problem
\begin{equation}\label{nonlocalproblem} \tag{$P$}
\begin{cases}
-\Delta u+B[u]=f(x,u), & \text{in } \Omega,\\
u=0, & \text{on } \partial\Omega,
\end{cases}
\end{equation}
where $B$ is the linear resolvent operator associated with the second equation
of \eqref{SC}. This reduction removes the strong indefiniteness of the original system, but it
introduces a different difficulty. The reduced equation is no longer a standard
local scalar equation: the operator $B$ produces a global contribution in the
energy and induces a natural norm on $H^{1}_{0}(\Omega)$ which is equivalent to
the usual Sobolev norm, but not identical to it. This distinction is essential in the critical exponential regime. Although the sharp Trudinger-Moser constant remains \(4\pi\), the normalization of
concentrating sequences and the corresponding minimax estimates must be
adapted to the nonlocal norm generated by the FitzHugh-Nagumo coupling.
We shall also use the spectral structure of the operator
\begin{equation*}
\operatorname{T} \coloneqq -\Delta+B: L^{2}(\Omega) \rightarrow L^{2}(\Omega),
\end{equation*}
with $D(T) = H^{2}(\Omega) \cap H^{1}_{0}(\Omega).$ This operator was introduced and studied by de Figueiredo and Mitidieri
\cite{Figueiredo1986Maximum} in their analysis of FitzHugh-Nagumo type
elliptic systems, where its spectral structure was established in detail. We
recall here the facts that will be used throughout the linking argument, all
of which are due to \cite{Figueiredo1986Maximum}. If
$(\lambda_j)_{j \in \mathbb{N}}$ denotes the sequence of Dirichlet eigenvalues
of $-\Delta$ in $\Omega$, counted with multiplicity, and
$(\varphi_j)_{j \in \mathbb{N}}$ is a corresponding $L^2$-orthonormal basis of
eigenfunctions, then $B[\varphi_j] = \dfrac{\delta^2}{\lambda_j + \gamma}\varphi_j$,
so that the same eigenfunctions $\varphi_j$ diagonalize $\operatorname{T} = -\Delta + B$, with
eigenvalues
\begin{equation} \label{eq:eigenvalues_intro}
\widehat{\lambda}_{j} = \lambda_{j} + \frac{\delta^2}{\lambda_{j} + \gamma}, \qquad j \in \mathbb{N};
\end{equation}
see \cite[eq.~$(1.2)$]{Figueiredo1986Maximum}. Moreover, it was shown in
\cite[Proposition~1.4]{Figueiredo1986Maximum} that these are the only
eigenvalues of $\operatorname{T}$ and that its spectrum consists precisely of the sequence
$(\widehat{\lambda}_j)_{j \in \mathbb{N}}$. In particular, although $\operatorname{T}$ is
nonlocal, its eigenspaces are generated by the Dirichlet eigenfunctions of the
Laplacian, so that the finite-dimensional space used in the linking construction
may be defined solely in terms of $\varphi_{1}, \dots, \varphi_{k}$. Finally, under
the assumption $\delta \leq \lambda_{1} + \gamma$, the sequence
$(\widehat{\lambda}_j)_{j \in \mathbb{N}}$ is nondecreasing, since the map
$t \mapsto t + \delta^2/(t+\gamma)$ is nondecreasing for $t \geq \lambda_1$. The
monotonicity of $(\widehat{\lambda}_j)_{j\in\mathbb{N}}$ and the sharp condition
ensuring it were already discussed in \cite[Remark~1.2]{Figueiredo1986Maximum}. This spectral decomposition will be used in the saddle point part of the paper to
construct a linking geometry.

Another feature of the system \eqref{SC} is that $f$ has critical exponential growth motivated by the Trudinger-Moser inequality. In its sharp form, due to Moser \cite{moser1971sharp}:
\begin{theoremletter}(Trudinger-Moser) \label{eq:TMclassicaC}There exists a constant $C > 0$ such that
\begin{equation*}
    \sup \limits_{ \{u \in H^{1}_{0}(\Omega) \;  \vert  \; \Vert \nabla u \Vert_{L^{2}(\Omega)} \leq 1 \}} \int_{\Omega} \exp(\alpha u^{2}) \; \mathrm{d}x \leq C \vert \Omega \vert,  \quad \text{for every } \; 0 < \alpha \leq 4 \pi.
\end{equation*}
Moreover, the constant $4\pi$ is sharp, in the sense that the above supremum is infinite for every $\alpha>4\pi$.
\end{theoremletter}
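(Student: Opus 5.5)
The plan follows Moser's original argument: symmetrize, reduce to a one-dimensional inequality, and prove sharpness by testing against explicit concentrating functions.

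\emph{Step 1: reduction to the radial case.} Extend $u\in H^1_0(\Omega)$ by zero. Replace $|u|$ by its Schwarz symmetrization $u^*$ on the ball $\Omega^*$ with $|\Omega^*|=|\Omega|$. Equimeasurability gives $\int_\Omega e^{\alpha u^2}=\int_{\Omega^*}e^{\alpha (u^*)^2}$. The P\'olya--Szeg\H{o} inequality gives $\|\nabla u^*\|_2\le\|\nabla u\|_2\le 1$. Since $e^{\alpha u^2}\le e^{4\pi u^2}$ for $\alpha\le 4\pi$, it suffices to treat $\alpha=4\pi$ and radially nonincreasing $u$ on a ball of radius $R$.

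\emph{Step 2: the change of variables.} Following Moser, set $|x|=Re^{-t/2}$ and $w(t)=\sqrt{4\pi}\,u(x)$. Then $w(0)=0$, $w$ is nondecreasing, and
\[
\int_\Omega|\nabla u|^2\,dx=\int_0^\infty \dot w(t)^2\,dt\le 1,\qquad \frac{1}{|\Omega|}\int_\Omega e^{4\pi u^2}\,dx=\int_0^\infty e^{w(t)^2-t}\,dt .
\]
The statement is therefore equivalent to a uniform bound on $\int_0^\infty e^{w^2-t}\,dt$ over all such $w$.

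\emph{Step 3: the one-dimensional estimate (the main obstacle).} Cauchy--Schwarz gives $w(t)\le\sqrt t\,\bigl(\int_0^t\dot w^2\bigr)^{1/2}\le\sqrt t$. This only shows $w^2-t\le 0$, which is not integrable, so it is not enough. The sharper version is
\[
w(t)^2\le t\int_0^t\dot w^2\,ds\le t\Bigl(1-\int_t^\infty \dot w^2\,ds\Bigr).
\]
I will use Moser's splitting. Fix $T$ with $w(T)^2$ close to $T$; on $[0,T]$ the bound $w^2\le t$ shows the contribution of this interval is at most linear in $T$ times a controlled factor. On $[T,\infty)$ write $w(t)\le w(T)+\sqrt{t-T}\,\bigl(\int_T^\infty\dot w^2\bigr)^{1/2}$. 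Then use $\int_T^\infty\dot w^2\le 1-w(T)^2/T$ and complete the square. This gives $w^2-t\le -c(t-T)+O(1)$ when $w(T)^2/T$ is near $1$, and exponential decay otherwise. The delicate point is choosing $T$ uniformly. I would follow Moser's case analysis on the set where $w(t)^2\ge t-\text{const}$, showing that this set is bounded by an absolute constant. That yields $\int_0^\infty e^{w^2-t}\,dt\le C$ with $C$ independent of $w$ and of $\Omega$.

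\emph{Step 4: sharpness.} For $\alpha>4\pi$, test with the Moser functions $w_n(t)=t/\sqrt n$ for $t\le n$ and $w_n(t)=\sqrt n$ for $t>n$. These have $\int_0^\infty\dot w_n^2\,dt=1$, and
\[
\int_0^\infty e^{(\alpha/4\pi)w_n^2-t}\,dt\ge\int_n^\infty e^{(\alpha/4\pi-1)n-(t-n)}\,dt=e^{(\alpha/4\pi-1)n}\to\infty .
\]
Translated back, these are $u_n\in H^1_0(B_R)\subset H^1_0(\Omega)$ for a ball $B_R\subset\Omega$, with $\|\nabla u_n\|_2=1$ and $\int_\Omega e^{\alpha u_n^2}\,dx\to\infty$. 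This proves the supremum is infinite for every $\alpha>4\pi$.
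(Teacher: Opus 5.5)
The paper does not prove this statement; it quotes it from Moser and only reruns the sharpness mechanism in its own $B$-norm lemma. Your Steps 1, 2 and 4 are correct, and your $w_n$ are exactly the paper's Moser functions $\widetilde G_m$ with $n=2\log m$. Step 3, however, carries the whole upper bound, and the mechanism you describe there fails.

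\textbf{Why Step 3 fails.} The extremals themselves show the problem. One has $t-w_n(t)^2=t(n-t)/n$ on $[0,n]$ and $t-w_n(t)^2=t-n$ afterwards. So the points where $w_n^2$ is within $\lambda$ of $t$ lie near $t=0$ and near $t=n\to\infty$, and no uniform choice of $T$ exists.
\begin{itemize}
\item If $T\approx n$, your bound $\int_0^T e^{w^2-t}\,dt\le T$ is useless.
\item If $T$ is kept bounded, the forward estimate gives nothing. With $\vartheta=w(T)^2/T<1$, completing the square yields only
\[
t-w(t)^2\ \ge\ \bigl(\sqrt{\vartheta(t-T)}-\sqrt{(1-\vartheta)T}\,\bigr)^2 .
\]
This vanishes at $t=T/\vartheta$, and the value is attained by $w_{T/\vartheta}$. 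So ``exponential decay otherwise'' is false.
\item The set $\{w^2\ge t-c\}$ is not contained in a bounded interval uniformly in $w$; only its measure is bounded. A measure bound at one fixed level $c$ says nothing about the complement. That complement has infinite measure, and there you only know $e^{w^2-t}<e^{-c}$.
\end{itemize}

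\textbf{The missing idea.} Control the distribution function of $\Theta(t):=t-w(t)^2\ge 0$ at every level, then integrate in $\lambda$. Set $E_\lambda:=\{t\ge 0:\Theta(t)\le\lambda\}$. The layer-cake formula gives $\int_0^\infty e^{-\Theta(t)}\,dt=\int_0^\infty |E_\lambda|\,e^{-\lambda}\,d\lambda$, so it suffices to show $|E_\lambda|\le C\lambda$.

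Let $a$ be the first point of $E_\lambda$ in $[2\lambda,\infty)$; if there is none, $|E_\lambda|\le 2\lambda$. Put $\eta^2:=\int_a^\infty \dot w^2\,dt$. Then $a-\lambda\le w(a)^2\le a(1-\eta^2)$, which forces $a\eta^2\le\lambda$. Hence, for $t\ge a$,
\[
w(t)\le \sqrt a+\sqrt{\lambda(t-a)/a},\qquad \Theta(t)\ge \tfrac12(t-a)-2\sqrt{\lambda(t-a)} .
\]
Therefore $E_\lambda\cap[2\lambda,\infty)\subset[a,\,a+(2+\sqrt6)^2\lambda]$, and $|E_\lambda|\le\bigl(2+(2+\sqrt6)^2\bigr)\lambda$.

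Note that $a$ depends on $\lambda$ and on $w$, and may be arbitrarily large. The region before $a$ is controlled by the definition of $a$ as a first entry point, not by the crude bound $e^{w^2-t}\le 1$. With this lemma in place, your Steps 1, 2 and 4 complete the proof.
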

Problems involving Trudinger-Moser critical growth have been studied in different
elliptic settings. Adimurthi \cite{adimurthi1990existence} considered Dirichlet
problems in the framework of the $N$-Laplacian. For semilinear elliptic equations
in $\mathbb{R}^{2}$, de Figueiredo, Miyagaki and Ruf \cite{de2014elliptic} proved
a compactness result based on the condition that the minimax level lies below the
critical energy level $2\pi/\alpha_{0}$, the threshold associated with the
Trudinger-Moser inequality in the variational setting. In a related direction, Barboza and do \'{O} \cite{10.57262/die/1580958028} studied H\'enon elliptic equations in
$\mathbb{R}^{2}$ with subcritical and critical exponential growth. Figueiredo and Severo
\cite{figueiredo2016ground} considered a Kirchhoff problem with exponential
critical growth, where a nonlocal term is present. To the best of our knowledge,
only a few works deal with FitzHugh-Nagumo type systems involving critical
growth phenomena. Figueiredo and Montenegro \cite{FigueiredoMontenegro2021}
studied a zero mass FitzHugh-Nagumo system with critical growth, treating both
polynomial and exponential regimes. Torres Ledesma \cite{TorresLedesma2022}
considered fractional FitzHugh-Nagumo type systems in $\mathbb{R}^{N}$, allowing
critical polynomial growth when $N\geq 2$ and critical exponential growth in the
case $N=1$ and $s=1/2$. More recently, Carvalho, Furtado and Melo
\cite{carvalho2025fitzhugh} studied a two-dimensional nonlinear FitzHugh-Nagumo system with exponential growth and radial weights in $\mathbb{R}^{2}$. These works indicate that FitzHugh-Nagumo type systems with critical growth
remain relatively unexplored, especially in bounded two-dimensional domains
where the reduction procedure leads to a nonlocal norm interacting with the
Trudinger-Moser threshold. 

In contrast with the preceding works, we consider a smooth bounded domain in dimension two and exploit the resolvent reduction of the strongly indefinite system. The resulting quadratic form induces a nonstandard norm, requiring both a sharp Trudinger–Moser inequality in this norm and a projected Moser construction compatible with the spectral splitting used in the linking argument. 

In this work, we assume that $f\in C(\overline{\Omega}\times\mathbb{R},\mathbb{R})$ and that the following
conditions hold:

\begin{enumerate}[
label=$(f_{\arabic*})$,
ref=$(f_{\arabic*})$,
series=f
]
\item \label{f1}
There exists $\alpha_0>0$ such that $f$ has critical exponential growth at infinity, namely 
\begin{equation*}
\lim_{\vert s\vert\rightarrow + \infty} \vert f(x,s) \vert \exp(-\alpha s^{2})
=
\begin{cases}
0, & \text{for all } \alpha>\alpha_0,\\
+\infty, & \text{for all } \alpha<\alpha_{0},
\end{cases}
\end{equation*}
uniformly with respect to $x\in\overline{\Omega}$.

\vspace{0.2cm}

\item \label{f2}
The function $f$ is sublinear at the origin, uniformly with respect to $x$, in the sense that
\begin{equation*}
\lim_{s\rightarrow 0}
\sup_{x\in \overline{\Omega}}
\left\vert
\frac{f(x,s)}{s}
\right\vert
=
0.
\end{equation*}

\vspace{0.2cm}

\item \label{f3}
There exist constants $s_0,K_0>0$ such that
\begin{equation*}
\vert F(x,s) \vert \leq K_0\vert f(x,s)\vert,
\quad
\text{for all } (x,s)\in \overline{\Omega}\times
\left((-\infty,-s_0]\cup [s_0,+\infty)\right),
\end{equation*}
where $F(x,s)=\int_0^s  f(x,\tau )\, \mathrm{d}\tau$.

\vspace{0.2cm}

\item \label{f4}
The following limit holds:
\begin{equation*}
\lim_{s\rightarrow +\infty}
\frac{s f(x,s)}{\exp(\alpha_0 s^{2})}
=
+\infty,
\end{equation*}
uniformly with respect to $x\in \overline{\Omega}$.

\vspace{0.2cm}

\item \label{f5}
For every $x\in\overline{\Omega}$ and every $s\in\mathbb R\setminus\{0\}$,
\begin{equation*}
0<2F(x,s)\leq s f(x,s).
\end{equation*}

\vspace{0.2cm}

\item \label{f6}
For every $x\in \overline{\Omega}$, the function
\begin{equation*}
s\mapsto \frac{f(x,s)}{\vert s \vert}
\end{equation*}
is strictly increasing on $(-\infty,0)$ and on $(0,+\infty)$.
\end{enumerate}
A typical example of a function satisfying assumptions
\ref{f1}-\ref{f6} is
$
f(x,s)=\vert s\vert^{p-2}s\exp(\alpha_0s^{2}),
$
where $p>2$.

\vspace{0.2cm}

We now state our first main result. The notions of ground state solution
for the original system \eqref{SC} and for the reduced problem
\eqref{nonlocalproblem} are specified in
Definitions~\ref{eq:defingroundstateforj} and~\ref{def:groundstate_reduced},
respectively.

\begin{theorem}\label{eq:mudouagoraC}
Suppose that assumptions \ref{f1}-\ref{f6} hold and that
$\delta,\gamma>0$. Then the nonlocal problem
\eqref{nonlocalproblem} admits a nontrivial ground state solution
$u\in H^{1}_{0}(\Omega)$. 
Moreover, if $v\in H^{1}_{0}(\Omega)$ is defined by
$
\delta v=B[u],
$
then $(u,v)$ is a nontrivial ground state solution of the system \eqref{SC}.
\end{theorem}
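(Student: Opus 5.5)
We work with the reduced functional
\[
I(u)=\frac12\|u\|_B^2-\int_\Omega F(x,u)\,\mathrm{d}x,\qquad \|u\|_B^2:=\int_\Omega|\nabla u|^2\,\mathrm{d}x+\int_\Omega B[u]\,u\,\mathrm{d}x ,
\]
on $H^1_0(\Omega)$. Since $B=\delta^2(-\Delta+\gamma)^{-1}$ is self-adjoint, positive and compact, $\|\cdot\|_B$ is a norm equivalent to the gradient norm, and $\|\nabla u\|_2\le\|u\|_B$. By \ref{f1}--\ref{f2} and Theorem~\ref{eq:TMclassicaC}, $I\in C^1$ and its critical points are weak solutions of \eqref{nonlocalproblem}.

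\textbf{Mountain pass geometry and the level.} By \ref{f2} and \ref{f1}, $|F(x,s)|\le\varepsilon s^2+C|s|^q\exp(\beta s^2)$ with $q>2$. Combining this with Hölder and Trudinger--Moser for small $\|u\|_B$ gives $I\ge\rho>0$ on a small sphere. From \ref{f5} and \ref{f3}, $F$ grows superquadratically, so $I(tu)\to-\infty$. Ekeland's principle or the mountain pass theorem without (PS) then yields a Cerami/(PS) sequence at level $c$. Using \ref{f6}, $t\mapsto I(tu)$ has a unique maximum for $u\neq0$, so
\[
c=\inf_{\mathcal N}I=\inf_{u\ne0}\max_{t>0}I(tu),
\]
where $\mathcal N$ is the Nehari manifold. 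This identification is the Nehari level comparison.

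\textbf{Main obstacle: the upper bound $c<2\pi/\alpha_0$.} We take Moser functions $M_n$ concentrated at $x_0\in\Omega$ with $\|\nabla M_n\|_2=1$. The nonlocal term satisfies $\int_\Omega B[M_n]M_n\le C\|M_n\|_2^2=O(1/\log n)$, so $\|M_n\|_B^2=1+O(1/\log n)$. Set $w_n=M_n/\|M_n\|_B$ and argue by contradiction, supposing $\max_t I(tw_n)\ge2\pi/\alpha_0$ for all $n$. At the maximizer $t_n$ we get $t_n^2\ge4\pi/\alpha_0$, and
\[
t_n^2=\int_\Omega t_nw_nf(x,t_nw_n)\,\mathrm{d}x .
\]
On the ball $B_{r/n}(x_0)$, \ref{f4} gives a lower bound $\gtrsim \exp\bigl(\alpha_0t_n^2\log n/(2\pi\|M_n\|_B^2)-2\log n\bigr)\cdot\Lambda$ with $\Lambda$ arbitrarily large. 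This forces $t_n$ bounded and $t_n^2\to4\pi/\alpha_0$. The delicate point is that the correction $\|M_n\|_B^2-1=O(1/\log n)$ enters the exponent multiplied by $\log n$, producing a bounded factor $e^{-C}$ rather than $1$. Because $\Lambda$ is arbitrarily large, this is absorbed, and the contradiction $4\pi/\alpha_0\ge\Lambda e^{-C}\cdot$const follows as in de Figueiredo--Miyagaki--Ruf. We will check carefully that the error is truly $O(1/\log n)$; this holds since $\|M_n\|_2^2\sim 1/\log n$.

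\textbf{Compactness and ground state.} The (PS)$_c$ sequence is bounded by \ref{f5} and \ref{f3}, and $u_n\rightharpoonup u$. By the de Figueiredo--Miyagaki--Ruf lemma, $f(x,u_n)\to f(x,u)$ in $L^1$, so $u$ is a critical point. Suppose $u=0$. Then $\int F(x,u_n)\to0$, so $\|u_n\|_B^2\to2c<4\pi/\alpha_0$. Since $\|\nabla u_n\|_2\le\|u_n\|_B$, Trudinger--Moser with exponent slightly above $\alpha_0$ gives $\int f(x,u_n)u_n\to0$, hence $c=0$, a contradiction. So $u\ne0$, $u\in\mathcal N$, and Fatou applied to $\tfrac12 sf-F\ge0$ gives $I(u)\le c$, so $I(u)=c$ and $u$ is a ground state.

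\textbf{The system.} For the system, setting $\delta v=B[u]$ gives a solution of \eqref{SC}, and every solution of \eqref{SC} arises this way with $J(u,v)=I(u)$ on the reduction. Minimality therefore transfers to the system.
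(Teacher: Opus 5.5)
Your proposal is correct. The mountain pass geometry, the comparison $c\le c_{\mathcal N}$, the estimate $c<2\pi/\alpha_0$ and the transfer to the system all follow the paper. Normalizing the Moser functions by $\Vert M_n\Vert_B$, instead of carrying $\Vert G_n\Vert_B^2=1+\varepsilon_n$ through the computation, is cosmetic: in both versions the $O(1/\log n)$ defect of the $B$-norm becomes a fixed factor $e^{-C}$, which is absorbed by the arbitrarily large constant from \ref{f4}.

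The genuine difference is the compactness step.
\begin{itemize}
\item You prove directly that the weak limit $u$ is a critical point, by testing with $C_c^\infty(\Omega)$ functions and using $f(x,u_n)\to f(x,u)$ in $L^1(\Omega)$. You rule out $u=0$ with the level bound. You then get $\mathcal{I}(u)\le c$ from Fatou's lemma applied to $\frac12 sf-F\ge 0$, which already follows from \ref{f5}. Finally, $u\in\mathcal N$ and $c\le c_{\mathcal N}$ give $\mathcal I(u)\ge c$.
\item The paper instead proves the auxiliary inequality of Lemma~\ref{eq:lema050526} and uses it to obtain $\mathcal I(u_0)\ge 0$. It then excludes $\mathcal{I}(u_0)<c$ through the Lions-type Lemma~\ref{eq:lionsLemm} in the $B$-norm. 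This gives strong convergence of the Palais--Smale sequence first, and criticality of $u_0$ only afterwards.
\end{itemize}

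Your route is shorter and makes Lemmas~\ref{eq:lionsLemm} and~\ref{eq:lema050526} unnecessary for Theorem~\ref{eq:mudouagoraC}. Strong convergence can even be recovered a posteriori: $\mathcal I(u)=c$ together with $\int_\Omega F(x,u_n)\to\int_\Omega F(x,u)$ forces $\Vert u_n\Vert_B\to\Vert u\Vert_B$. The paper's route buys a Palais--Smale-type compactness mechanism that it adapts in the saddle-point case (Proposition~\ref{prop:ps-saddle}). There Fatou only yields $\mathcal I(u)\le\hat c_n$, and strong convergence is what identifies the level.

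Two small points:
\begin{itemize}
\item You only use $c\le c_{\mathcal N}$. The equality $c=c_{\mathcal N}$ you assert needs an extra argument (that every path in $\Gamma$ meets $\mathcal N$), so it can simply be dropped.
\item The implication ``$u=0\Rightarrow\int_\Omega F(x,u_n)\to0$'' is exactly where \ref{f3} enters. It goes through $|F|\le C_0+K_0|f|$ and the generalized dominated convergence theorem, as in Lemma~\ref{eq:limitesu0C}, so state it explicitly.
\end{itemize}
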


Theorem \ref{eq:mudouagoraC} concerns the local-minimum geometry of the energy associated with the reduced problem $(P)$ at the trivial solution $u=0$. Indeed, assumption \ref{f2} implies that $F(x,s)=o(s^2)$ as $s\to 0$, uniformly with respect to $x\in\Omega$. Consequently, the nonlinear contribution is of lower order than the positive quadratic part of the reduced energy near the origin, and $u=0$ is a strict local minimum. This local behavior provides the positive part of the mountain pass geometry, while the monotonicity assumption \ref{f6} allows the corresponding minimax level to be identified with the least energy on the Nehari manifold.

The second part of the paper addresses the complementary situation in which the trivial solution is no longer a local minimum of the reduced energy. The behavior of $F(x,s)$ near zero is instead related to a spectral gap of $\operatorname{T}=-\Delta+B$. This produces a finite-dimensional family of descending directions, while the energy remains positive on a sufficiently small sphere along the complementary directions. We refer to this configuration as the saddle-point geometry at the origin, and the associated critical point is obtained through a linking argument. Thus, the terms {\it local-minimum geometry} and {\it saddle-point geometry} refer throughout the paper to the local behavior, near $u=0$ in $H_0^1(\Omega)$, of the energy associated with the reduced problem \eqref{nonlocalproblem}. In particular, the two theorems concern distinct and incompatible local behaviors of the reaction term near the origin and should not be interpreted as a multiplicity result for the same nonlinearity.

For the saddle point result, we shall need additional assumptions involving the
eigenvalues $\widehat{\lambda}_j$ of $\operatorname{T}$ defined in
\eqref{eq:eigenvalues_intro}. We fix $k\in\mathbb{N}$ such that
$\lambda_k<\lambda_{k+1}$. Under the assumption
$\delta\leq\lambda_1+\gamma$, this implies that 
\begin{equation} \label{eq:lambda-gap} \tag{$\mathcal{G}_k$}
\widehat{\lambda}_k<\widehat{\lambda}_{k+1}.    
\end{equation}
The assumptions below are formulated with respect to this spectral gap and
provide the estimates required to establish the linking geometry. Moreover,
assumption \ref{f4} provides the lower growth at the critical exponential rate
needed to show that the corresponding minimax level lies below the
Trudinger-Moser threshold.

\begin{enumerate}[label=$(f_{\arabic*})$, ref=$(f_{\arabic*})$, start=7]

\item \label{f7}
There exist $\delta_0>0$ and
$\mu_*\in(\hat{\lambda}_k,\hat{\lambda}_{k+1})$ such that
\begin{equation*}
F(x,s)\leq \frac{\mu_*}{2}s^2,
\quad
x\in\overline{\Omega},\quad \vert s\vert\leq\delta_0.
\end{equation*}

\item \label{f8}
There exists
$\sigma\in(0,\mu_*-\hat{\lambda}_k)$ such that
\begin{equation*}
F(x,s)\geq
\frac{\hat{\lambda}_k+\sigma}{2}s^2,
\quad
x\in\overline{\Omega},\quad s\in\mathbb R.
\end{equation*}

\item \label{f9}
There exist $\delta_1>0$ and $C>0$ such that
\begin{equation*}
\vert f(x,s)\vert\leq C\vert s\vert,
\quad
x\in\overline{\Omega},\quad \vert s\vert\leq\delta_1.
\end{equation*}
\end{enumerate}
A typical example of a function satisfying the assumptions \ref{f1}, \ref{f3}-\ref{f5} and \ref{f7}-\ref{f9} is
\begin{equation*}
f(x,s)=a_0s+bs\left(\exp(\alpha_0s^2)-1-\alpha_0s^2\right),
\end{equation*}
where $b>0$ and $a_0$ is a positive constant satisfying
$
\widehat{\lambda}_k+\sigma<a_0<\mu_{*}<\widehat{\lambda}_{k+1}.
$

Our second main result can be stated as follows.

\begin{theorem}[Saddle point at the origin] \label{eq:teoremadoissaddle} Assume that $\delta \leq \lambda_{1}+\gamma$, so that the spectral gap condition \eqref{eq:lambda-gap} is satisfied, and that
\ref{f1}, \ref{f3}-\ref{f5},
\ref{f7}-\ref{f9} hold. Then the equation \eqref{nonlocalproblem} has a
nontrivial weak solution. Consequently, the system \eqref{SC} has a nontrivial
weak solution $(u,v)$ such that $\delta v=B[u]$.
\end{theorem}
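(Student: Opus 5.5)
We obtain the solution from Rabinowitz's linking theorem for the reduced functional
\[
I(u)=\frac12\int_\Omega |\nabla u|^2\,\mathrm{d}x+\frac12\int_\Omega B[u]u\,\mathrm{d}x-\int_\Omega F(x,u)\,\mathrm{d}x ,\qquad u\in H^1_0(\Omega).
\]
We equip $H^1_0(\Omega)$ with the norm $\|u\|_{T}^2=\langle Tu,u\rangle$, which is equivalent to the standard one because $0\le \langle B[u],u\rangle\le \frac{\delta^2}{\lambda_1+\gamma}\|u\|_{L^2}^2$. We split $H^1_0(\Omega)=V\oplus W$ with $V=\mathrm{span}\{\varphi_1,\dots,\varphi_k\}$ and $W=V^{\perp}$. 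Since the $\varphi_j$ diagonalize $T$ with eigenvalues $\widehat\lambda_j$, and these are nondecreasing when $\delta\le\lambda_1+\gamma$, we get
\[
\|v\|_T^2\le\widehat\lambda_k\|v\|_{L^2}^2 \ \text{ on } V,\qquad \|w\|_T^2\ge\widehat\lambda_{k+1}\|w\|_{L^2}^2 \ \text{ on } W.
\]
By \ref{f1}, \ref{f9} and the Trudinger--Moser inequality, $I\in C^1$. Critical points of $I$ are weak solutions of \eqref{nonlocalproblem}, and $\delta v=B[u]$ then gives a solution of \eqref{SC}.

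\emph{Geometry.} The plan has two pieces.
\begin{enumerate}
\item[(i)] Positivity on a small sphere in $W$. Combine \ref{f7} on $\{|u|\le\delta_0\}$ with \ref{f1} on $\{|u|>\delta_0\}$ to get
\[
F(x,s)\le \frac{\mu_*}{2}s^2+C|s|^q\exp(\beta s^2),\qquad q>2.
\]
For $w\in W$ this yields
\[
I(w)\ge\frac12\Bigl(1-\frac{\mu_*}{\widehat\lambda_{k+1}}\Bigr)\|w\|_T^2-C\|w\|^q .
\]
The exponential factor is controlled by H\"older and Trudinger--Moser once $\|w\|$ is small. Hence $I\ge a>0$ on $\partial B_\rho\cap W$.
\item[(ii)] Nonpositivity on the boundary of $Q=\{v+te:\ v\in V,\ \|v+te\|\le R,\ t\ge0\}$ for a suitable $e\in W$. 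On $V$, \ref{f8} gives
\[
I(v)\le\frac12\bigl(\widehat\lambda_k-\widehat\lambda_k-\sigma\bigr)\|v\|_{L^2}^2\le0 .
\]
On the rest of $\partial Q$ with $R$ large, use \ref{f5} together with \ref{f4}, which give superquadratic growth of $F$. Since $V\oplus\mathbb{R}e$ is finite dimensional, $I\to-\infty$ there.
\end{enumerate}

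\emph{Compactness below the threshold.} Take a Palais--Smale sequence at the linking level $c$. Using \ref{f3} and \ref{f5} in the standard de Figueiredo--Miyagaki--Ruf fashion, the sequence is bounded and $f(x,u_n)\to f(x,u)$ in $L^1$. The goal is then to show that if $c<2\pi/\alpha_0$, the weak limit $u$ is a nontrivial critical point. The Trudinger--Moser inequality must be applied with $\|\nabla u\|_2\le\|u\|_T$, so the threshold is the same $4\pi$. Suppose $u=0$. Then $\int F(x,u_n)\to0$, so $\|u_n\|_T^2\to2c<4\pi/\alpha_0$. Hence $\alpha_0\|\nabla u_n\|_2^2$ stays below $4\pi$, which gives $\int f(x,u_n)u_n\to0$ and therefore $u_n\to0$. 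This contradicts $c\ge a>0$. Once $u\ne0$, it is a critical point by the $L^1$ convergence of $f(x,u_n)$.

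\emph{Main obstacle: the estimate $c<2\pi/\alpha_0$.} First take $e=P_W\widetilde M_n$, the projection onto $W$ of the Moser function $M_n$ centered at $x_0\in\Omega$, normalized in $\|\cdot\|_T$. The projection is needed to preserve the linking structure. Since $V$ consists of smooth functions and $M_n\rightharpoonup0$ with $\|M_n\|_{L^2}=O(1/\sqrt{\log n})$, the $V$-component and the $B$-contribution are $O(1/\log n)$.

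Then estimate
\[
\max_{Q}I(v+te)=\max_{v\in V,\,t\ge0}I(v+te).
\]
Write $\|v+te\|_T^2\le\widehat\lambda_k\|v\|_2^2+t^2\|e\|_T^2+o(1)$, and absorb the $V$-part using \ref{f8}. This reduces the problem to $\sup_t\bigl[\frac{t^2}{2}-\int F(x,v+te)\bigr]$. Argue by contradiction: if the supremum is at least $2\pi/\alpha_0$, then the maximizing $t_n$ satisfies $t_n^2\ge4\pi/\alpha_0-O(1/\log n)$. Using \ref{f4}, on $B_{1/n}(x_0)$ we have $sf(x,s)\ge\beta\exp(\alpha_0s^2)$ with $\beta$ arbitrarily large. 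The critical point equation then forces $t_n^2\ge\beta\,e^{\,(\alpha_0t_n^2/(4\pi)-1)\cdot 2\log n}\cdot(\text{const})$, which is contradictory for large $\beta$. This part needs careful bookkeeping of the $v$-perturbation near $x_0$ (it is bounded in $L^\infty$, since $V$ is finite dimensional and $\|v\|\le R$) and of the $O(1/\log n)$ corrections. I expect this to be the delicate step.
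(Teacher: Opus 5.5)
Your overall architecture matches the paper's: linking over $\mathcal{V}_k\oplus\mathbb{R}z_n$ with the $B$-projected Moser function, then compactness below $2\pi/\alpha_0$. Your route to a critical point (nontrivial weak limit plus $L^1$ convergence of $f(x,u_n)$) is fine. The gap is in the level estimate, which you defer as ``bookkeeping''; the mechanism you sketch does not work. At a maximizer $u_n=v_n+t_nz_n$, on $B(x_0,d/n)$ one has $u_n^2=t_n^2z_n^2+2t_nz_nv_n+v_n^2$ with $z_n\approx\sqrt{\log n/(2\pi)}$. So the cross term has no sign and is of order $\|v_n\|_{L^\infty}\sqrt{\log n}$. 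Nothing in your argument gives $\|v_n\|_{L^\infty}=O((\log n)^{-1/2})$. Without it, this term can turn $\int_{B(x_0,d/n)}\exp(\alpha_0u_n^2)\,\mathrm{d}x$ from order one into something tending to $0$; for instance it is about $\exp(-C\sqrt{\log n})$ if $v_n\approx -c<0$ near $x_0$. Hence your inequality $t_n^2\ge\beta\,e^{2(\alpha_0t_n^2/(4\pi)-1)\log n}\cdot\mathrm{const}$ does not follow. The reduction to $\sup_t\bigl[\frac{t^2}{2}-\int_\Omega F(x,v+te)\,\mathrm{d}x\bigr]$ is also unjustified: $(f_8)$ can absorb $\frac12\|v\|_B^2$ only by consuming $\int F$, while $v$ stays inside $F$. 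Finally, $\|v\|\le R$ gives no bound uniform in $n$, since $R$ depends on $e=e_n$.

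The missing idea is to keep, not discard, the energy gain that $(f_8)$ produces. If $\max_{Q_n}I\ge 2\pi/\alpha_0$, three facts give $t_n^2\ge\frac{4\pi}{\alpha_0}+\sigma\|v_n\|_{L^2}^2$: $(f_8)$, the simultaneous $B$- and $L^2$-orthogonality of $\mathcal{V}_k$ and $\mathcal{W}_k$, and $\|v\|_B^2\le\widehat\lambda_k\|v\|_{L^2}^2$ on $\mathcal{V}_k$. Combine this with $\|v\|_{L^\infty}\le K_k\|v\|_{L^2}$ on $\mathcal{V}_k$ and complete the square to get $u_n^2\ge\frac{4\pi}{\alpha_0}z_n^2-\frac{t_n^2K_k^2}{\sigma}$ on the ball. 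The $\mathcal{V}_k$-component then costs only a bounded constant in the exponent. Accordingly, $\beta$ in $(f_4)$ must exceed $4e^{C_B/\pi}/(\alpha_0 d^2)$ with $C_B=C_{B,0}+8\pi^2K_k^2/\sigma$. You also need uniform bounds on $(t_n,v_n)$; the paper obtains them by a dichotomy on $t_n/\|v_n\|_B$, and they then force $t_n^2\to4\pi/\alpha_0$ and $v_n\to0$. Next, use the identity $\|u_n\|_B^2=\int_\Omega f(x,u_n)u_n\,\mathrm{d}x$, obtained by testing stationarity of $I$ on $\mathcal{V}_k\oplus\mathbb{R}z_n$ at the interior maximizer with $u_n$ itself. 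Its integrand is nonnegative by $(f_5)$, so it can be restricted to the ball. That is not true of $\int_\Omega f(x,u_n)z_n\,\mathrm{d}x$, which is what the $t$-derivative alone gives. A minor point: $(f_4)$ concerns only $s\to+\infty$. The growth making $I\to-\infty$ on $\mathcal{V}_k\oplus\mathbb{R}e$ should come from $(f_3)$ and $(f_5)$, or from $(f_1)$ and $(f_5)$.
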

The main contribution of this paper is to overcome the combined difficulties
produced by the strongly indefinite variational structure of FitzHugh-Nagumo
systems and the loss of compactness associated with critical exponential growth
in dimension two. By means of the classical reduction method, the original system
is transformed into a reduced nonlocal equation whose energy functional has a
positive definite quadratic part. The reduction, however, introduces a resolvent
term $B$, which induces an equivalent but nonstandard norm $\Vert \cdot \Vert_B$ on
$H^1_0(\Omega)$, and the critical regime must be analyzed with respect to this norm. Since
$\Vert u \Vert_B \ge \Vert \nabla u \Vert_{L^2(\Omega)}$, the supremum defining the Trudinger-Moser
inequality in the $B$-norm is taken over a strictly smaller set than in the
classical case, so that boundedness is inherited but sharpness is not. We prove
that the optimal constant remains exactly $4\pi$ (Lemma~\ref{eq:TMeditadaC}),
the sharpness resting on a renormalization of the concentrating Moser sequence in
the $B$-norm (Lemma~\ref{eq:lemadanorma0711C}). This is the analytical core on which both
existence results stand: the sharp value $4\pi$ is what locates the threshold
$2\pi/\alpha_0$ at which compactness is lost, and every minimax estimate in the
paper is calibrated against it. A different optimal constant would displace the
concentration scale relative to the critical exponent $\alpha_0$ fixed
in~\ref{f1}, and the entire compactness analysis would have to be reorganized
accordingly. To exploit this threshold along weakly convergent sequences, we
further establish a Lions-type compactness lemma adapted to the $B$-norm; see
Lemma~\ref{eq:lionsLemm}.

Both theorems use this sharp threshold, but the arguments diverge from there. In
the ground state case, the minimax level is preserved below $2\pi/\alpha_0$ through the
usual concentrating Moser sequence; see Lemma~\ref{eq:lemadonivelc}. In the saddle
point case, we use the spectral decomposition of $\operatorname{T}= -\Delta+B$ to construct a
linking geometry, and we control the critical level through a projected Moser
estimate in the $B$-norm compatible with the finite-dimensional spectral
splitting; see Lemma~\ref{lem:projected-moser-B}. This construction dispenses with the
monotonicity condition~\ref{f6} required in the ground state argument. In both
cases, the solution of the reduced problem is transferred back to the original
FitzHugh-Nagumo system through the resolvent correspondence $\delta v = B[u]$.

\vspace{0.2cm}

The paper is organized as follows. In Section \ref{sec2C}, we develop the
variational framework for system \eqref{SC}. We discuss the strongly indefinite
structure of the original system, derive the nonlocal reduction, and introduce
the functional setting induced by the resolvent operator. Particular attention is
given to the nonlocal norm on $H^1_0(\Omega)$ and to the asymptotic behavior of
Moser functions in this norm. In Section \ref{sec3C}, we prove the existence of a
ground state solution. We establish the mountain pass geometry, prove the
critical minimax estimate below the Trudinger-Moser threshold, recover the
compactness needed to obtain a nontrivial critical point, and then identify it as
a ground state solution. In Section \ref{sec4C}, we study the saddle point case.
We first establish the spectral estimates associated with $\operatorname{T} = -\Delta+B$, then prove
the linking geometry, obtain the projected Moser estimate, and finally verify the
Palais-Smale condition needed to establish the existence of a nontrivial solution in the saddle point case.

\section{The variational framework}\label{sec2C}
In this section, we introduce the fundamental concepts needed to solve System \eqref{SC} variationally.
Before passing to the nonlocal reduction, let us describe the variational
structure naturally associated with system \eqref{SC}. Formally, the energy
functional corresponding to \eqref{SC} is given by
\begin{equation}
\label{eq:functional-system}
\mathcal{J}(u,v)
=
\frac{1}{2}\int_{\Omega} \vert \nabla u \vert^2\,\mathrm{d}x
-
\frac{1}{2}\int_{\Omega} \vert \nabla v \vert^2\,\mathrm{d}x
-
\frac{\gamma}{2}\int_{\Omega} v^2\,\mathrm{d}x
+
\delta\int_{\Omega} uv\,\mathrm{d}x
-
\int_{\Omega} F(x,u)\,\mathrm{d}x,
\end{equation}
for $(u,v)\in H_0^1(\Omega)\times H_0^1(\Omega)$, where
$
F(x,s)=\int_0^s f(x, \tau)\, \mathrm{d} \tau.
$
It is standard to check, under our assumptions and by the Trudinger-Moser inequality, that the functional $\mathcal{J}$ is well defined and of class $C^{1}$.
The functional $\mathcal{J}$ is strongly indefinite, since the quadratic part has
opposite signs in the variables $u$ and $v$. However, its critical points
are precisely the weak solutions of system \eqref{SC}. Indeed, for every
$(\varphi,\psi)\in H_{0}^{1}(\Omega)\times H_{0}^{1}(\Omega)$, we have
\begin{align*}
\mathcal{J}'(u,v)(\varphi,\psi)
=&
\int_{\Omega} \nabla u \nabla \varphi\,\mathrm{d}x
-
\int_{\Omega} \nabla v \nabla \psi\,\mathrm{d}x
-
\gamma\int_{\Omega} v\psi\,\mathrm{d}x
+
\delta\int_{\Omega} v\varphi\,\mathrm{d}x
\nonumber\\
&
+
\delta\int_{\Omega} u\psi\,\mathrm{d}x
-
\int_{\Omega} f(x,u)\varphi\,\mathrm{d}x.
\end{align*}
Thus, $\mathcal{J}'(u,v)=0$ if and only if
\begin{align*}
\int_{\Omega} \nabla u \nabla \varphi\,\mathrm{d}x
+
\delta\int_{\Omega} v\varphi\,\mathrm{d}x
-
\int_{\Omega} f(x,u)\varphi\,\mathrm{d}x
=&0,
\\
\int_{\Omega} \nabla v \nabla \psi\,\mathrm{d}x
+
\gamma\int_{\Omega} v\psi\,\mathrm{d}x
-
\delta\int_{\Omega} u\psi\,\mathrm{d}x
=&0,
\end{align*}
for every $\varphi,\psi\in H_0^1(\Omega)$. Therefore, critical points of $\mathcal{J}$
are precisely weak solutions of \eqref{SC}.

\begin{definition} \label{eq:defingroundstateforj}
We say that a pair $(u,v)\in H_0^1(\Omega)\times H_0^1(\Omega)$ is a ground state solution of system \eqref{SC} if $(u,v)$ is a nontrivial
weak solution of \eqref{SC} and
\begin{equation*}
\mathcal{J}(u,v)
=
\inf
\left\{
\mathcal{J}(\varphi,\psi)
\, :  \,
(\varphi,\psi)\in H_0^1(\Omega)\times H_0^1(\Omega),\
(\varphi,\psi)\neq (0,0),\
\mathcal{J}'(\varphi,\psi)=0
\right\},
\end{equation*}
where $\mathcal{J}$ denotes the energy functional associated with system \eqref{SC}.
\end{definition}

We recall that, since $\gamma>0$, the operator $-\Delta+\gamma$, endowed with homogeneous Dirichlet boundary conditions, is invertible. More precisely, for each $u\in L^2(\Omega)$ there exists a unique solution $v\in H_0^1(\Omega)\cap H^2(\Omega)$ of
\begin{equation*}
\begin{cases}
-\Delta v+\gamma v=\delta u, & \mbox{in } \Omega,\\
v=0, & \mbox{on } \partial\Omega.
\end{cases}
\end{equation*}
Thus,
\begin{equation*}
v=\delta(-\Delta+\gamma)^{-1}u.
\end{equation*}
In particular, the resolvent operator is first defined as a bounded linear operator
\begin{equation*}
(-\Delta+\gamma)^{-1}:L^2(\Omega)\to H_0^1(\Omega)\cap H^2(\Omega).
\end{equation*}
Using the compact embedding $H_0^1(\Omega)\hookrightarrow L^2(\Omega)$, we can also consider $(-\Delta+\gamma)^{-1}$ as a compact linear operator from $L^2(\Omega)$ into itself. Moreover, this operator is self-adjoint and positive in $L^2(\Omega)$. Multiplying by $\delta$, we obtain
\begin{equation*}
\delta v=\delta^2(-\Delta+\gamma)^{-1}u \eqqcolon B[u].
\end{equation*}
This defines a bounded, linear, compact, and self-adjoint operator
\begin{equation*}
B : L^2(\Omega) \rightarrow L^2(\Omega).
\end{equation*}
Since $H_0^1(\Omega) \hookrightarrow L^2(\Omega)$, for every $u,w\in H_0^1(\Omega)$ the term
\begin{equation*}
\int_\Omega B[u]w\, \mathrm{d}x
\end{equation*}
is well defined. Moreover, for each $u \in L^{2}(\Omega) \setminus \{0\},$ we have
\begin{equation*}
\int_\Omega B[u]u\, \mathrm{d}x > 0,
\end{equation*}
that is, the operator $B$ is positive. In terms of the operator $B$, System \eqref{SC} is equivalent to the nonlocal problem \eqref{nonlocalproblem}, namely,
\begin{equation*}
\begin{cases}
-\Delta u+B[u]=f(x,u), & \mbox{in } \Omega,\\
u=0, & \mbox{on } \partial\Omega.
\end{cases}
\end{equation*}
Thus, $(u,v)\in H_{0}^{1}(\Omega)\times H_{0}^{1}(\Omega)$ is a weak solution of system \eqref{SC} if and only if $u$ is a weak solution of \eqref{nonlocalproblem} and $\delta v=B[u]$. The operator $B$ induces the norm
\begin{equation*}
\Vert u\Vert_B =
\left[
\int_{\Omega}  \vert \nabla u \vert^2 \; \mathrm{d}x + \int_{\Omega}B[u]u \, \mathrm{d}x
\right]^{1/2}.
\end{equation*}
This norm is equivalent to the usual Dirichlet norm on $H_0^1(\Omega)$. Indeed, the positivity of $B$ implies that
\begin{equation*}
\Vert u\Vert_B^2 \geq \Vert \nabla u\Vert_{L^2(\Omega)}^2.
\end{equation*}
On the other hand, the boundedness of $B$ on $L^2(\Omega)$, combined with H\"{o}lder's inequality and the Poincar\'e inequality, yields
\begin{equation*}
\Vert u\Vert_B^2
\leq \Vert \nabla u\Vert_{L^2(\Omega)}^2
+\Vert B\Vert_{\mathcal{L}(L^2(\Omega),L^2(\Omega))}\,\Vert u\Vert_{L^2(\Omega)}^2
\leq C\,\Vert \nabla u\Vert_{L^2(\Omega)}^2.
\end{equation*}
Hence $\Vert\cdot\Vert_B$ and the standard $H_0^1(\Omega)$-norm are equivalent. Moreover, this norm is induced by the inner product
\begin{equation*}
\langle u,w\rangle_B =
\int_{\Omega} \nabla u\nabla w \, \mathrm{d}x + \int_{\Omega} B[u]w \, \mathrm{d}x.
\end{equation*}

We first consider the case where $\Omega=B(0,1)\subset\mathbb{R}^2$ is the unit ball centered at the origin. For every $n\in\mathbb{N}$ with $n\geq 2$, we consider the Moser function
\begin{equation} \label{eq:moserfunctionC}
\widetilde{G}_n(x) = \frac{1}{\sqrt{2\pi}}
\begin{cases}
(\log n)^{1/2}, & \text{if } \vert x \vert \leq \tfrac{1}{n}, \\[6pt]
\dfrac{\log \tfrac{1}{\vert x\vert }}{(\log n)^{1/2}}, & \text{if } \tfrac{1}{n}\leq \vert x\vert \leq 1, \\[10pt]
0, & \text{if } \vert x\vert \geq 1.
\end{cases}   
\end{equation}
Recall that the Moser function $\tilde G_n$ satisfies (see \cite{moser1971sharp})
\begin{equation*}
\int_{\Omega} \vert \nabla \tilde G_n \vert^{2}\, \mathrm{d}x = 1.
\end{equation*}
In order to analyze the asymptotic behavior of the Moser functions with respect to the nonlocal norm induced by the operator $B$, we establish the following result:
\begin{lemma} \label{eq:lemadanorma0711C}
Assume that $\Omega = B(0,1)$ and let $\tilde G_n$ be the Moser functions. Then
\begin{equation*}
\Vert \tilde G_n \Vert_{B}^{2} = 1 + O\left(\frac{1}{\log(n)} \right)= 1 + o_n(1).
\end{equation*}
\end{lemma}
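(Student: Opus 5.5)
Since $\Vert \tilde G_n\Vert_B^2 = \int_\Omega |\nabla \tilde G_n|^2\,\mathrm{d}x + \int_\Omega B[\tilde G_n]\tilde G_n\,\mathrm{d}x = 1 + \int_\Omega B[\tilde G_n]\tilde G_n\,\mathrm{d}x$, the plan is to show that the nonlocal term is $O(1/\log n)$. We will bound it by an $L^2$ norm and then compute that norm explicitly. The first step uses that $B=\delta^2(-\Delta+\gamma)^{-1}$ is bounded and self-adjoint on $L^2(\Omega)$. By Cauchy--Schwarz,
\[
0<\int_\Omega B[\tilde G_n]\tilde G_n\,\mathrm{d}x \le \Vert B\Vert_{\mathcal{L}(L^2(\Omega),L^2(\Omega))}\,\Vert \tilde G_n\Vert_{L^2(\Omega)}^2 \le \frac{\delta^2}{\lambda_1+\gamma}\,\Vert \tilde G_n\Vert_{L^2(\Omega)}^2 .
\]
The operator norm bound is explicit by the spectral representation $B[\varphi_j]=\frac{\delta^2}{\lambda_j+\gamma}\varphi_j$ recalled in the introduction. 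Alternatively, one can test $-\Delta w+\gamma w=\delta^2 u$ with $w$ to get $\gamma\Vert w\Vert_2\le \delta^2\Vert u\Vert_2$. Either bound suffices.

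The second step is the computation of $\Vert\tilde G_n\Vert_{L^2(\Omega)}^2$ in polar coordinates. On the inner disc $|x|\le 1/n$, the contribution is $\frac{1}{2\pi}\log n\cdot \pi n^{-2}=\frac{\log n}{2n^2}$, which is $o(1/\log n)$. On the annulus $1/n\le |x|\le 1$, it is
\[
\frac{1}{2\pi\log n}\int_{1/n}^{1} 2\pi r\,(\log r)^2\,\mathrm{d}r \le \frac{1}{\log n}\int_0^1 r(\log r)^2\,\mathrm{d}r=\frac{1}{4\log n}.
\]
Hence $\Vert \tilde G_n\Vert_{L^2}^2=O(1/\log n)$. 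In fact it equals $\frac{1}{4\log n}+O(n^{-2}\log n)$, but only the upper bound is needed.

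Combining the two steps gives $1\le \Vert\tilde G_n\Vert_B^2\le 1+\frac{C}{\log n}$ with $C=\frac{\delta^2}{4(\lambda_1+\gamma)}+o(1)$. This is exactly the claim $\Vert\tilde G_n\Vert_B^2=1+O(1/\log n)=1+o_n(1)$. I do not expect a genuine obstacle here. The only point needing care is to justify that $B$ maps $L^2$ boundedly with an explicit constant, which the paper has already recorded. The remaining work is the elementary integral $\int_0^1 r(\log r)^2\,\mathrm{d}r=1/4$. This integral is the reason the correction is of order $1/\log n$ rather than smaller, since the Moser functions, although concentrating in gradient, spread their $L^2$ mass over the whole ball at scale $(\log n)^{-1/2}$.
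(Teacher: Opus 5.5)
Your proposal is correct and follows the paper's route: you bound the nonlocal term by $\Vert B\Vert\,\Vert\tilde G_n\Vert_{L^2(\Omega)}^2$ via Cauchy--Schwarz, then compute the $L^2$ norm in polar coordinates, with the annulus contributing at most $\frac{1}{\log n}\int_0^{\infty}s^2e^{-2s}\,\mathrm{d}s=\frac{1}{4\log n}$. Your bookkeeping is slightly tighter, since the explicit constant $\frac{\delta^2}{4(\lambda_1+\gamma)}$ gives the stated $O(1/\log n)$ rate directly, whereas the paper's final step only records $o_n(1)$.
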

\begin{proof}
First, notice that
\begin{equation*}
\int_{\Omega} B[\tilde{G}_n] \tilde{G}_n \, \mathrm{d}x \leq \int_{\Omega} \vert B[\tilde{G}_n]\vert \, \vert \tilde{G}_{n} \vert \, \mathrm{d}x 
   \leq \Vert B[\tilde{G}_n] \Vert_{L^2(\Omega)} \, \Vert \tilde{G}_n \Vert_{L^2(\Omega)}.
\end{equation*}
\noindent Since $B\in \mathcal{L}(L^2(\Omega),L^2(\Omega))$, it follows that
\begin{equation}\label{estimB1}
\int_{\Omega} B[\tilde{G}_n] \tilde{G}_n \, \mathrm{d}x \leq \Vert B \Vert_{\mathcal L(L^2(\Omega),L^2(\Omega))} \, \Vert \tilde{G}_n \Vert_{L^2(\Omega)}^2.
\end{equation}
By polar coordinates, we have
\begin{equation}\label{estimB2}
\Vert \tilde{G}_{n} \Vert_{L^2(\Omega)}^2 = \int_{\Omega} \vert \tilde{G}_n \vert^2 \, \mathrm{d}x 
   = \int_{0}^{1} \int_{0}^{2\pi} (\tilde{G}_n(r))^2 \, r \, \mathrm{d}\theta \, \mathrm{d}r.
\end{equation}
We rewrite the integral as follows:
\begin{align}\label{estimB3}
 \int_{0}^{1} \int_{0}^{2\pi} \bigl(\tilde{G}_n(r)\bigr)^{2} \, r \, \mathrm{d}\theta \, \mathrm{d}r  = & \dfrac{1}{2 \pi}\int_{0}^{1/n} \int_{0}^{2\pi} r \; \log n  \, \mathrm{d}\theta \, \mathrm{d}r 
   + \dfrac{1}{2 \pi \log n}\int_{1/n}^{1} \int_{0}^{2\pi} \left(\log \tfrac{1}{r}\right)^{2} r \, \mathrm{d}\theta \, \mathrm{d}r \nonumber \\
     = & \dfrac{\log n}{2n^{2}} + \dfrac{1}{\log n}\int_{1/n}^{1}  \left(\log \tfrac{1}{r}\right)^{2} r \, \mathrm{d}r \nonumber \\  = & o_{n}(1) + \dfrac{1}{\log n}\int_{1/n}^{1} \left(\log \tfrac{1}{r}\right)^{2} r \, \mathrm{d}r. 
\end{align}
Making the change of variables $s = \log(1/r),$ we obtain $\exp(s) = 1/r,$ which implies $r = \exp(-s).$ Thus $\mathrm{d}r = - \exp(-s)\; \mathrm{d}s$ and $r \; \mathrm{d}r = - \exp(-2s) \; \mathrm{d}s.$ Therefore, it follows that
\begin{align*}
 \int_{0}^{1} \int_{0}^{2\pi} \bigl(\tilde{G}_n(r)\bigr)^{2} \, r \, \mathrm{d} \theta \, \mathrm{d}r  = & o_{n}(1) + \dfrac{1}{\log n}\int_{1/n}^{1} \left(\log \tfrac{1}{r}\right)^{2} r \, \mathrm{d}r \\  = & o_{n}(1) + \dfrac{1}{\log n} \int^{0}_{\log n} - s^{2} \exp(-2s) \; \mathrm{d}s \\  = & o_{n}(1) + \dfrac{1}{\log n} \int^{\log n}_{0} s^{2} \exp(-2s) \; \mathrm{d}s.
\end{align*}
Note that 
\begin{equation*}
    \int^{\log n}_{0} s^{2} \exp(-2s) \; \mathrm{d}s \rightarrow \int^{+ \infty}_{0} s^{2} \exp(-2s) \; \mathrm{d}s = \dfrac{1}{4}, \quad n \rightarrow + \infty,
\end{equation*}
and hence, 
\begin{equation}\label{estimB4}
   O\left(\frac1{\log n}\right) = \dfrac{1}{\log n} \int^{\log n}_{0} s^{2} \exp(-2s) \; \mathrm{d}s \rightarrow 0, \quad n \rightarrow + \infty.
\end{equation}
Therefore, combining \eqref{estimB1}-\eqref{estimB4}, we obtain
\begin{equation*}
\int_{\Omega} B[\widetilde G_n]\widetilde G_n\, \mathrm{d}x  =o_{n}(1).   
\end{equation*}
Since $\Vert \nabla\widetilde G_{n} \Vert_{L^{2}(\Omega)}^2=1$, we conclude that
\begin{equation*}
\Vert \widetilde G_{n}\Vert_{B}^{2} = 1 + o_{n}(1).
\end{equation*}
This completes the proof.
\end{proof}
If $\Omega \subset \mathbb{R}^{2}$ is an arbitrary open bounded domain, let $x_0 \in \Omega,$ $d >0$ be such that the open ball $B(x_0, d)$ is contained in $\Omega$ and $\tilde{G}_{n}$ as defined in \eqref{eq:moserfunctionC}. For each $n\in\mathbb{N}$, it is standard to verify that the function
\begin{equation} \label{eq:mosersequenced}
    G_n(x) \coloneqq \widetilde{G}_n \left( \frac{x-x_0}{d} \right), \text{ for all } x \in \Omega,
\end{equation}
belongs to $H_0^1(\Omega)$, $\Vert G_{n} \Vert^{2}_{H^{1}_{0}(\Omega)} = 1$, and the support of $G_n$ is contained in $B(x_0, d)$. By the change of variables $y=(x-x_0)/d$ and proceeding as in
Lemma~\ref{eq:lemadanorma0711C}, we have
\begin{equation*}
\Vert G_n \Vert_{L^2(\Omega)}^2 = d^2 \Vert \widetilde G_n \Vert_{L^2(B(0,1))}^2 = O\left(\frac{1}{\log(n)} \right).    
\end{equation*}
Therefore,
\begin{equation*}
 \int_\Omega B[G_n]G_n \, \mathrm{d}x = O\left(\frac{1}{\log(n)} \right),  
\end{equation*}
and hence
\begin{equation} \label{eq:normassintoticad}
\Vert G_n \Vert_{B}^{2}
=
\int_{\Omega} \vert \nabla G_n \vert^{2}\, \mathrm{d}x
+
\int_{\Omega} B[G_n]\,G_n\, \mathrm{d}x
=
1 + d^{2} o_{n}(1) = 1 + O\left(\frac{1}{\log(n)} \right) =1 + o_{n}(1). 
\end{equation} 

\vspace{0.3cm}

The next lemma is the sharp Trudinger-Moser inequality for the $B$-norm. Since $\Vert u \Vert_{B}\geq \Vert \nabla u \Vert_{L^{2}(\Omega)}$, the boundedness follows from the classical Trudinger-Moser (Theorem \ref{eq:TMclassicaC}). The sharpness of $4\pi$ does not follow directly, since the supremum is taken over a smaller set. Its proof requires the renormalization of the Moser functions provided by Lemma \ref{eq:lemadanorma0711C}.

\begin{lemma}[Sharp Trudinger-Moser inequality for the \(B\)-norm] \label{eq:TMeditadaC}
\label{lem:TM-B-norm}
For each fixed $u\in H^1_0(\Omega)$, one has
\[
    \int_\Omega \exp(\alpha u^2)\,\mathrm{d}x < +\infty
    \qquad\text{for every }\alpha>0.
\]
Moreover, there exists a constant $C>0$, independent of $u$, such that
\[
    \sup_{\substack{u\in H^1_0(\Omega)\\ \Vert u \Vert_B\le 1}}
    \int_\Omega \exp(\alpha u^2)\,\mathrm{d}x
    \le C|\Omega|,
    \qquad
    \text{for every } 0 < \alpha\le 4\pi .
\]
The constant $4\pi$ is sharp in the following sense: for every \(\alpha>4\pi\),
\[
    \sup_{\substack{u\in H^1_0(\Omega)\\ \Vert u \Vert_B\le 1}}
    \int_\Omega \exp(\alpha u^2)\,\mathrm{d}x
    =+\infty .
\]
\end{lemma}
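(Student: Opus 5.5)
The proof splits into three parts: finiteness for fixed $u$, the uniform bound, and sharpness. Only the last uses the nonlocal structure in an essential way.

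\emph{Finiteness for fixed $u$.} This is the classical Trudinger statement for $H^1_0(\Omega)$ functions and is independent of $B$. I would use density. Pick $\varphi\in C_c^\infty(\Omega)$ with $\Vert\nabla(u-\varphi)\Vert_{L^2(\Omega)}^2\le \pi/\alpha$, and write $u^2\le 2(u-\varphi)^2+2\varphi^2$. Then
\[
\exp(\alpha u^2)\le \exp(2\Vert\varphi\Vert_\infty^2\alpha)\exp\bigl(2\alpha (u-\varphi)^2\bigr).
\]
Now $2\alpha(u-\varphi)^2=2\alpha\Vert\nabla(u-\varphi)\Vert_{L^2}^2\,w^2\le 2\pi w^2$ with $w=(u-\varphi)/\Vert\nabla(u-\varphi)\Vert_{L^2}$. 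So Theorem~\ref{eq:TMclassicaC} bounds the integral.

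\emph{Uniform bound.} If $\Vert u\Vert_B\le1$, then positivity of $B$ gives $\Vert\nabla u\Vert_{L^2}\le\Vert u\Vert_B\le 1$. Hence the set $\{\Vert u\Vert_B\le 1\}$ is contained in $\{\Vert\nabla u\Vert_{L^2}\le1\}$, and the supremum is bounded by the classical one, namely $C|\Omega|$, for $0<\alpha\le4\pi$.

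\emph{Sharpness.} This is the main point, and I would handle it with the Moser sequence renormalized in the $B$-norm. Fix $\alpha>4\pi$ and a ball $B(x_0,d)\subset\Omega$. Take $G_n$ as in \eqref{eq:mosersequenced} and set $w_n=G_n/\Vert G_n\Vert_B$, so that $\Vert w_n\Vert_B=1$. By \eqref{eq:normassintoticad}, $\Vert G_n\Vert_B^2=1+\varepsilon_n$ with $\varepsilon_n=O(1/\log n)$. On $B(x_0,d/n)$ we have $G_n^2=\log n/(2\pi)$, hence
\[
\int_\Omega \exp(\alpha w_n^2)\,\mathrm{d}x\ \ge\ \pi\frac{d^2}{n^2}\exp\Bigl(\frac{\alpha}{4\pi}\,\frac{2\log n}{1+\varepsilon_n}\Bigr)=\pi d^2\exp\Bigl(\Bigl(\frac{\alpha}{2\pi(1+\varepsilon_n)}-2\Bigr)\log n\Bigr).
\]
Because $\varepsilon_n\to0$ and $\alpha/(2\pi)>2$, the exponent coefficient is eventually bounded below by a positive constant. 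The right-hand side therefore tends to $+\infty$, which gives sharpness.

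The only delicate step is to ensure that the $B$-correction does not destroy this divergence. The qualitative statement $\varepsilon_n=o_n(1)$ suffices, since $\alpha>4\pi$ is strict; the rate $O(1/\log n)$ would only matter at the endpoint $\alpha=4\pi$. That rate is supplied by Lemma~\ref{eq:lemadanorma0711C} and its rescaled form \eqref{eq:normassintoticad}.
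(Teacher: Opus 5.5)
Your proposal is correct and follows essentially the same route as the paper. Finiteness for fixed $u$ comes from density plus the classical inequality, the uniform bound from the inclusion $\{\Vert u\Vert_B\le 1\}\subset\{\Vert\nabla u\Vert_{L^2(\Omega)}\le 1\}$, and sharpness from the $B$-normalized Moser sequence $G_n/\Vert G_n\Vert_B$ evaluated on $B(x_0,d/n)$. Your threshold $\pi/\alpha$ in place of the paper's $2\pi/\alpha$ is immaterial, and the one detail to add is the trivial case $\nabla(u-\varphi)\equiv 0$ before normalizing, which the paper treats separately.
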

\begin{proof} 
Let $u\in H_0^1(\Omega)$ and $\alpha>0$. Since
$\|\nabla\,\cdot\,\|_{L^2(\Omega)}$ and $\|\cdot\|_B$ are equivalent norms on
$H_0^1(\Omega)$, they induce the same topology, and $C_c^\infty(\Omega)$ is
dense in $H_0^1(\Omega)$ with respect to either of them. Choose
$\psi\in C_c^\infty(\Omega)$ such that
\begin{equation} \label{eq:choicephi}
\Vert \nabla(u-\psi) \Vert_{L^2(\Omega)}^2 < \frac{2\pi}{\alpha},    
\end{equation}
and set $w \coloneqq u-\psi$, so that $u=w+\psi$. If $\|\nabla w\|_{L^2(\Omega)}=0$,
then $w\equiv 0$ and $u=\psi\in L^\infty(\Omega)$, whence the integral is
trivially finite. Assume therefore $\|\nabla w\|_{L^2(\Omega)}>0$.

By Young's inequality
\[
  u^2=(w+\psi)^2 \le 2w^2+2\psi^2 .
\]
Since $\psi\in C_c^\infty(\Omega)$, we have
$M \coloneqq \|\psi\|_{L^\infty(\Omega)}<+\infty$, and therefore
\[
  \int_\Omega \exp(\alpha u^2)\, \mathrm{d}x
    \le \exp(2\alpha M^2) \int_\Omega \exp(2\alpha w^2)\, \mathrm{d}x .
\]
Define
\[
\overline{w}:=\frac{w}{\|\nabla w\|_{L^2(\Omega)}}.
\]
Then \(\|\nabla\overline{w}\|_{L^2(\Omega)}=1\). Moreover, setting
\[
\beta \coloneqq 2\alpha\|\nabla w\|_{L^2(\Omega)}^2,
\]
the choice of \(\psi\) in \eqref{eq:choicephi} implies that
\[
\beta<4\pi.
\]
Therefore, by the classical Trudinger-Moser inequality (Theorem \ref{eq:TMclassicaC}),
\[
\int_{\Omega}\exp(2\alpha w^2)\,\mathrm{d}x
=
\int_{\Omega}\exp\bigl(\beta\overline{w}^{\,2}\bigr)\,\mathrm{d}x
\leq C|\Omega|<+\infty.
\]
Consequently,
\[
\int_{\Omega}\exp(\alpha u^2)\, \mathrm{d}x \leq
C\exp\bigl(2\alpha\|\psi\|_{L^\infty(\Omega)}^2\bigr)|\Omega|
<+\infty.
\]
Since $\alpha>0$ was arbitrary, this proves the first assertion.

Now, let $u \in H^{1}_{0}(\Omega)$ be such that $\Vert u \Vert_{B} \leq 1.$ This implies that $\Vert \nabla u \Vert_{L^{2}(\Omega)} \leq  1.$ Then, by the Trudinger-Moser inequality (Theorem \ref{eq:TMclassicaC}), we have that
\begin{equation*}
    \int_{\Omega} \exp(\alpha u^{2})\,\mathrm{d}x \leq \sup \limits_{ \{u \in H^{1}_{0}(\Omega) \;  \vert  \; \Vert \nabla u \Vert_{L^{2}(\Omega)} \leq 1 \}} \int_{\Omega} \exp(\alpha u^{2}) \; \mathrm{d}x \leq C \vert \Omega \vert, \quad \text{for all } \; \alpha \leq 4 \pi.
\end{equation*}
Since $u \in H^{1}_{0}(\Omega)$ is arbitrary, it follows that
\begin{equation*}
    \sup \limits_{ \{u \in H^{1}_{0}(\Omega) \;  \vert  \; \Vert u \Vert_{B} \leq 1 \}} \int_{\Omega} \exp(\alpha u^{2}) \; \mathrm{d}x \leq C \vert \Omega \vert, \quad \text{for all } \; \alpha \leq 4 \pi.
\end{equation*}

We claim that the constant $\alpha = 4\pi$ is sharp, in the sense that, if $\alpha > 4\pi$, then
\begin{equation*}
\sup_{\Vert u \Vert_B \leq 1}
\int_{\Omega} \exp(\alpha u^2)\,\mathrm{d}x = +\infty.
\end{equation*}
In order to prove this claim, we consider the Moser sequence defined in \eqref{eq:mosersequenced}. It follows from \eqref{eq:normassintoticad} that
$
\Vert G_n \Vert_B = 1+o_n(1).
$
Define
$
H_n = \frac{G_n}{\Vert G_n \Vert_B}.
$
Then
$
\Vert H_n \Vert_B = 1.
$

Note that
\begin{equation*}
\int_{\Omega} \exp(\alpha H_n^2)\,\mathrm{d}x = \int_{\Omega} \exp\left(\frac{\alpha G_n^2}{1+o_{n}(1)}\right)\,\mathrm{d}x
\geq
\int_{B(x_{0},d/n)}
\exp\left(
\frac{\alpha G_n^{2}}{1 + o_{n}(1)}
\right)\,\mathrm{d}x.
\end{equation*}
On $B(x_{0},d/n) \subset \Omega \subset \mathbb{R}^{2}$, we have that
$
G_n^2(x)=\frac{\log n}{2\pi}.
$
Thus,
\begin{equation*}
\int_{\Omega} \exp(\alpha H_n^2)\,\mathrm{d}x
\geq
\int_{B(x_{0},d/n)}
\exp\left(
\frac{\alpha \log n}{2\pi(1+o_n(1))}
\right)\,\mathrm{d}x\geq
\int_{B(x_{0},d/n)}
n^{\frac{\alpha}{2\pi(1+o_n(1))}}\,\mathrm{d}x,
\end{equation*}
which implies that
\begin{equation*}
\int_{\Omega} \exp(\alpha H_n^2)\,\mathrm{d}x
\geq
n^{\frac{\alpha}{2\pi(1+o_n(1))}}
 \vert B(x_{0},d/n) \vert =\pi d^{2}
n^{\frac{\alpha}{2\pi (1+o_n(1))}-2}.
\end{equation*}
Finally, if $\alpha>4\pi$, then $\frac{\alpha}{2\pi}-2>0$ and therefore
\begin{equation*}
\int_{\Omega} \exp(\alpha H_n^2)\,\mathrm{d}x
\rightarrow +\infty, \text{ as } \; n \rightarrow + \infty.
\end{equation*}
This proves the sharpness of the constant \(4\pi\) and completes the proof.
\end{proof}

The functional $\mathcal{I}:H^1_0(\Omega)\rightarrow\mathbb{R}$ associated with problem \eqref{nonlocalproblem} is given by
\begin{equation*}
    \mathcal{I}(u) = \dfrac{1}{2} \Vert u \Vert_{B}^{2} - \int_{\Omega} F(x,u) \, \mathrm{d} x.
\end{equation*}
In view of the Trudinger-Moser inequality (see Lemma \ref{eq:TMeditadaC}) and assumptions \ref{f1}-\ref{f3} (or \ref{f9} in the saddle point at origin case) the functional is well defined. Using the standard arguments (see \cite{de1989lectures}) and Lemma \ref{eq:TMeditadaC} one can prove that $\mathcal{I} \in C^{1}(H^{1}_{0}(\Omega), \mathbb{R})$ with
\begin{equation*}
    \mathcal{I}'(u)w = \langle u , w \rangle_{B} - \int_{\Omega} f(x,u)w \, \mathrm{d} x,
\end{equation*}
and the critical points correspond to weak solutions of \eqref{nonlocalproblem}.

\begin{definition} \label{def:groundstate_reduced}
We say that $u\in H^1_0(\Omega)$ is a ground state solution of the reduced
problem \eqref{nonlocalproblem} if $u$ is a nontrivial weak solution of \eqref{nonlocalproblem} and
\begin{equation*}
    \mathcal{I}(u)=\inf\left\{
    \mathcal{I}(w): w\in H^1_0(\Omega),\ w\neq 0,\ \mathcal{I}'(w)=0
    \right\}.
\end{equation*}
\end{definition}

We now introduce a compactness refinement adapted to the norm $\Vert \cdot \Vert_{B}$.
It will be used to control exponential terms along weakly convergent sequences.
\begin{lemma}[Lions' compactness lemma for the $B$-norm] \label{eq:lionsLemm}
Let $(u_n)_{n \in \mathbb{N}}\subset H_0^1(\Omega)$ be a sequence such that
$\Vert u_n \Vert_B = 1$, $u_n \rightharpoonup u \neq 0$ weakly in $H_0^1(\Omega)$ and $\Vert u \Vert_B<1$. Then, for every
\begin{equation*}
    0 < p < \frac{4\pi}{1-\Vert u \Vert_B^2},
\end{equation*}
there holds
\begin{equation*}
    \sup_{n\in\mathbb{N}}
    \int_{\Omega}\exp(pu_n^2)\,\mathrm{d}x <+\infty .
\end{equation*}
\end{lemma}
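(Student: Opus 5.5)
We follow the classical argument of P.-L.~Lions, adapted to the Hilbert structure $\langle\cdot,\cdot\rangle_B$. The point is that the inner product $\langle\cdot,\cdot\rangle_B$ is continuous on $H_0^1(\Omega)$, so weak convergence in $H_0^1(\Omega)$ is also weak convergence in $(H_0^1(\Omega),\langle\cdot,\cdot\rangle_B)$. Setting $w_n \coloneqq u_n-u$, we therefore get $w_n\rightharpoonup 0$, and the Hilbert identity
\[
\Vert w_n\Vert_B^2=\Vert u_n\Vert_B^2-2\langle u_n,u\rangle_B+\Vert u\Vert_B^2 = 1-\Vert u\Vert_B^2+o_n(1).
\]
Since $\Vert \nabla w_n\Vert_{L^2(\Omega)}\le \Vert w_n\Vert_B$, this yields $\limsup_n \Vert\nabla w_n\Vert_{L^2(\Omega)}^2\le 1-\Vert u\Vert_B^2$. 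This is exactly where the $B$-norm threshold enters, and it is the only place where the nonlocal term matters. It helps rather than hinders, because $B$ is positive.

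Next, fix $p<4\pi/(1-\Vert u\Vert_B^2)$ and choose $\varepsilon>0$ small such that
\[
q \coloneqq p(1+\varepsilon)\bigl(1-\Vert u\Vert_B^2+\varepsilon\bigr)<4\pi.
\]
Using the elementary inequality $u_n^2\le (1+\varepsilon)w_n^2+(1+\varepsilon^{-1})u^2$ together with H\"older's inequality with exponents $r,r'$, where $r>1$ is close to $1$ so that $rq<4\pi$ still holds, we bound
\[
\int_\Omega \exp(pu_n^2)\,\mathrm{d}x\le \Bigl(\int_\Omega \exp\bigl(rp(1+\varepsilon)w_n^2\bigr)\,\mathrm{d}x\Bigr)^{1/r}\Bigl(\int_\Omega \exp\bigl(r'p(1+\varepsilon^{-1})u^2\bigr)\,\mathrm{d}x\Bigr)^{1/r'}.
\]
The second factor is a finite constant independent of $n$, by the first assertion of Lemma~\ref{eq:TMeditadaC}, which states that $\exp(\alpha u^2)\in L^1(\Omega)$ for every $\alpha>0$ and fixed $u$.

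For the first factor, take $n\ge n_0$ so that $\Vert\nabla w_n\Vert_{L^2(\Omega)}^2\le 1-\Vert u\Vert_B^2+\varepsilon$. Write $w_n=\Vert\nabla w_n\Vert_{L^2(\Omega)}\,\overline{w}_n$, with $\Vert\nabla\overline{w}_n\Vert_{L^2(\Omega)}=1$; the case $w_n=0$ is trivial. The exponent then becomes $rp(1+\varepsilon)\Vert\nabla w_n\Vert_{L^2(\Omega)}^2\,\overline{w}_n^2\le rq\,\overline{w}_n^2$, with $rq\le 4\pi$. The classical Trudinger--Moser inequality (Theorem~\ref{eq:TMclassicaC}), or equivalently the bound in Lemma~\ref{eq:TMeditadaC}, then gives a uniform bound $C|\Omega|$ for all $n\ge n_0$.

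Finally, each of the finitely many terms with $n<n_0$ is finite, again by the first part of Lemma~\ref{eq:TMeditadaC}. Taking the maximum of these together with the uniform bound gives the supremum over all $n$.

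The main obstacle is choosing the parameters $\varepsilon$ and $r$ so that the strict inequality $rq<4\pi$ survives all the losses. This is possible because the hypothesis on $p$ is strict and every loss factor tends to $1$. We also note that the assumption $u\neq 0$ is not really needed for the argument, apart from making the threshold exceed $4\pi$.
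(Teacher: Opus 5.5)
Your proof is correct, but it takes a different route from the paper's.

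The paper uses the compactness of $B$ to get $\int_\Omega B[u_n]u_n\,\mathrm{d}x\to\int_\Omega B[u]u\,\mathrm{d}x$, and hence $\Vert\nabla u_n\Vert_{L^2(\Omega)}^2\to\ell\coloneqq 1-\int_\Omega B[u]u\,\mathrm{d}x>0$. It then renormalizes in the Dirichlet norm, $w_n=u_n/\Vert\nabla u_n\Vert_{L^2(\Omega)}\rightharpoonup u/\sqrt{\ell}$, and checks that $1-\Vert\nabla(u/\sqrt{\ell})\Vert_{L^2(\Omega)}^2=(1-\Vert u\Vert_B^2)/\ell$. This puts $p\ell$ below Lions' threshold for $(w_n)$, and the paper concludes by invoking Lions' original lemma as a black box.

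You instead reprove the Lions-type bound directly in the Hilbert geometry of $\langle\cdot,\cdot\rangle_B$. The splitting $\Vert u_n-u\Vert_B^2=1-\Vert u\Vert_B^2+o_n(1)$ needs only that $\langle\cdot,u\rangle_B$ is a continuous functional on $H_0^1(\Omega)$. The passage to the Dirichlet norm needs only the positivity of $B$. The rest is Young, H\"older, and the classical Trudinger--Moser inequality.

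The paper's route buys brevity and a transparent reduction to a known result. Yours buys a self-contained argument that never uses the compactness of $B$. It also makes clear that the hypothesis $u\neq 0$ is superfluous. The paper uses it only to get $\ell>0$, but that already follows from $\ell=(1-\Vert u\Vert_B^2)+\Vert\nabla u\Vert_{L^2(\Omega)}^2$.

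One small imprecision: your $\overline{w}_n$, normalized in the Dirichlet norm, has $\Vert\overline{w}_n\Vert_B\geq 1$. So Lemma~\ref{eq:TMeditadaC} does not apply to it as stated, and Theorem~\ref{eq:TMclassicaC} is the right tool there. If you prefer Lemma~\ref{eq:TMeditadaC}, normalize $w_n$ in the $B$-norm instead, using $\Vert w_n\Vert_B^2\le 1-\Vert u\Vert_B^2+\varepsilon$ for large $n$.
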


\begin{proof}
Since $u_n\rightharpoonup u$ weakly in $H_0^1(\Omega)$ and $B$ is compact, we have $B[u_n]\rightarrow B[u]$, strongly in $L^2(\Omega)$. In view of H\"{o}lder's inequality and the compact embedding, we deduce 
\begin{align*}
    \int_{\Omega}(B[u_{n}]u_{n}-B[u]u)\,\mathrm{d}x = &
    \int_{\Omega}(B[u_n]u_n-B[u]u_n+B[u]u_n-B[u]u)\,\mathrm{d}x \\  = &
    \int_{\Omega}(B[u_n]-B[u])u_n\,\mathrm{d}x
    +
    \int_{\Omega}B[u](u_n-u)\,\mathrm{d}x\\
    \leq & \Vert B[u_n]-B[u]\Vert_{L^2(\Omega)}
    \Vert u_n\Vert_{L^2(\Omega)}+  \Vert B[u]\Vert_{L^2(\Omega)}
    \Vert u_n-u\Vert_{L^2(\Omega)},
\end{align*}
which implies that
\begin{equation*}
    \int_{\Omega}B[u_n]u_n\,\mathrm{d}x
    \rightarrow
    \int_{\Omega}B[u]u\,\mathrm{d}x, \quad \mbox{as } n\rightarrow + \infty.
\end{equation*}
Hence, since $\Vert u_n \Vert_{B}=1$, we have that
\begin{equation}\label{eqa}
    \Vert \nabla u_n \Vert_{L^2(\Omega)}^2
    \rightarrow
    1-\int_{\Omega}B[u]u\,\mathrm{d}x
    \coloneqq \ell, \quad \text{as } n \rightarrow + \infty.
\end{equation}
We claim that $\ell>0$. Indeed, if $\ell=0$, then $\Vert \nabla u_n \Vert_{L^2(\Omega)}\rightarrow 0$, as $n \rightarrow + \infty$. By the weak lower semicontinuity of the Dirichlet norm, we obtain
\begin{equation*}
    \Vert \nabla u \Vert_{L^2(\Omega)} \leq \liminf \limits_{n \rightarrow + \infty} \Vert \nabla u_{n}\Vert_{L^{2}(\Omega)} = 0,
\end{equation*}
and hence $u=0$, which contradicts the assumption $u\neq 0$. Therefore, $\ell>0$.

Define
$
    w_n \coloneqq
    u_n/\Vert \nabla u_n \Vert_{L^2(\Omega)}.
$
Thus,
$
    \Vert \nabla w_n \Vert_{L^2(\Omega)}=1
$
and $w_n \rightharpoonup w \coloneqq u/\sqrt \ell$, as $n \rightarrow + \infty$ weakly in $H^1_0(\Omega)$. Moreover,
$
    \Vert \nabla w \Vert_{L^2(\Omega)}^2
    =
    \Vert \nabla u \Vert_{L^2(\Omega)}^2/\ell.
$
Thus, from \eqref{eqa}, we have that
\begin{equation*}
    1-\Vert \nabla w \Vert_{L^2(\Omega)}^2
    =
    1-\frac{\Vert \nabla u \Vert_{L^2(\Omega)}^2}{\ell}
    =
    \frac{1-\Vert u \Vert_B^2}{\ell} > 0.
\end{equation*}
If
$
    0<p<4\pi/(1-\Vert u\Vert_B^2),
$
then
\begin{equation*}
    p \ell < \frac{4\pi \ell}{1-\Vert u\Vert_B^2}
    =
    \frac{4\pi}{1-\Vert \nabla w\Vert_{L^2(\Omega)}^2}.
\end{equation*}
Since $\Vert \nabla u_n \Vert_{L^2(\Omega)}^2\rightarrow \ell,$ as $n \rightarrow + \infty,$ choose $\tilde{p} > p \ell$ such that 
\begin{equation*}
    p\Vert \nabla u_n \Vert^{2} < \tilde{p} <
    \frac{4\pi}{1-\Vert \nabla w\Vert_{L^2(\Omega)}^2},
\end{equation*}
for $n\in\mathbb{N}$ sufficiently large, which implies that
\begin{equation*}
    pu_n^2
    =
    p\Vert \nabla u_n \Vert_{L^2(\Omega)}^2 w_n^2
    \leq
    \tilde{p} w_n^2,
\end{equation*}
for every $n\in\mathbb{N}$ large enough. By Lions' Compactness Lemma \cite[Theorem I.6 and Remark I.18]{Lions1985LimitCase} applied to $(w_n)_{n \in \mathbb{N}}$, we obtain, for some $n_0 \in \mathbb{N}$,

\begin{equation*}
\sup_{n\geq n_0}
\int_{\Omega}\exp(\tilde{p}w_n^2)\,\mathrm{d}x<+\infty.
\end{equation*}
Therefore,
\begin{equation*}
\sup_{n\geq n_0}
\int_{\Omega}\exp(pu_n^2)\,\mathrm{d}x<+\infty.
\end{equation*}
On the other hand, by the first assertion of Lemma \ref{lem:TM-B-norm}, for each $n=1,\ldots,n_0-1$,
\begin{equation*}
\int_{\Omega}\exp(pu_n^2)\,\mathrm{d}x<+\infty.
\end{equation*}
Since there are only finitely many such indices, we conclude that
\begin{equation*}
\sup_{n\in\mathbb{N}}
\int_{\Omega}\exp(pu_n^2)\,\mathrm{d}x<+\infty.
\end{equation*}
This completes the proof.
\end{proof}

\begin{proposition}\label{prop:ground_state_equivalence}
Let $\mathcal{I}$ be the reduced functional and let $\mathcal J$ be the functional associated
with system \eqref{SC}. If $u$ is a ground state solution of the reduced nonlocal
problem \eqref{nonlocalproblem} and $v\in H^{1}_{0}(\Omega)$ is defined by $\delta v=B[u],$ then
$(u,v)$ is a ground state solution of system \eqref{SC}.
\end{proposition}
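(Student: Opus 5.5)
The plan is to show that the reduction $\delta v=B[u]$ sets up a bijection between the nontrivial critical points of $\mathcal J$ and those of $\mathcal I$, and that $\mathcal J$ and $\mathcal I$ take the same value on corresponding points. The ground state property then transfers directly.

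\emph{Step 1: correspondence of critical points.} By the computation of $\mathcal J'$ in Section~\ref{sec2C}, $\mathcal J'(\varphi,\psi)=0$ holds if and only if two conditions are satisfied. First, $\psi$ is the weak solution of $-\Delta\psi+\gamma\psi=\delta\varphi$, that is, $\delta\psi=B[\varphi]$ by uniqueness in Lax--Milgram. Second, $\varphi$ is a weak solution of \eqref{nonlocalproblem}, that is, $\mathcal I'(\varphi)=0$. Moreover, $(\varphi,\psi)\neq(0,0)$ if and only if $\varphi\neq 0$: if $\varphi=0$ then $\psi=\delta^{-1}B[0]=0$. Hence the nontrivial critical points of $\mathcal J$ are exactly the pairs $(\varphi,\delta^{-1}B[\varphi])$ with $\varphi$ a nontrivial critical point of $\mathcal I$.

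\emph{Step 2: equality of energies.} Let $\psi=\delta^{-1}B[\varphi]$. Testing the second equation with $\psi$ itself gives
\[
\int_\Omega|\nabla\psi|^2\,\mathrm dx+\gamma\int_\Omega\psi^2\,\mathrm dx=\delta\int_\Omega\varphi\psi\,\mathrm dx .
\]
Substituting this into \eqref{eq:functional-system}, the three $\psi$-terms combine to
\[
-\tfrac12\delta\int_\Omega\varphi\psi\,\mathrm dx+\delta\int_\Omega\varphi\psi\,\mathrm dx=\tfrac12\int_\Omega B[\varphi]\varphi\,\mathrm dx .
\]
Therefore $\mathcal J(\varphi,\psi)=\tfrac12\|\varphi\|_B^2-\int_\Omega F(x,\varphi)\,\mathrm dx=\mathcal I(\varphi)$. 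Note that this equality uses only the $v$-equation, which every critical point satisfies.

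\emph{Step 3: conclusion.} By Step~1, $(u,v)$ with $\delta v=B[u]$ is a nontrivial weak solution of \eqref{SC}. Take any nontrivial critical point $(\varphi,\psi)$ of $\mathcal J$. By Step~1, $\psi=\delta^{-1}B[\varphi]$ and $\varphi$ is a nontrivial critical point of $\mathcal I$. By Step~2 and Definition~\ref{def:groundstate_reduced},
\[
\mathcal J(\varphi,\psi)=\mathcal I(\varphi)\ge\mathcal I(u)=\mathcal J(u,v).
\]
Taking the infimum over all such $(\varphi,\psi)$ gives Definition~\ref{eq:defingroundstateforj}. (In fact the two infima coincide.)

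The only delicate point is Step~1. There one must check that $\psi$ is genuinely determined by $\varphi$, via uniqueness of the weak solution of the resolvent problem, and that nontriviality of the pair reduces to $\varphi\neq0$. Both follow from coercivity of $\sigma$ and linearity of $B$. Everything else is an algebraic identity.
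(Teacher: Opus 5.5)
Your proposal is correct and follows essentially the same argument as the paper. The paper also shows that $(u,v)$ solves \eqref{SC}, that $\mathcal J(\varphi,\psi)=\mathcal I(\varphi)$ whenever $\delta\psi=B[\varphi]$ (by testing the $v$-equation with $\psi$), and that every nontrivial critical point of $\mathcal J$ has this form with $\varphi\neq 0$ a critical point of $\mathcal I$, before comparing energies; your framing as an energy-preserving bijection, which also shows that the two ground-state levels coincide, only reorganizes these same steps.
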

\begin{proof}
We divide the proof into four steps.

\medskip

\noindent\textbf{Step 1: From the reduced problem to the original system.}

\medskip

Let $u$ be a ground state solution of the reduced problem \eqref{nonlocalproblem}. Then $u\neq 0$ and
$\mathcal{I}'(u)=0$. Hence,
\begin{equation} \label{eq:critical-reduced-problem}
    \int_{\Omega}\nabla u\nabla \eta\,\mathrm{d}x
    +
    \int_{\Omega}B[u]\eta\,\mathrm{d}x
    =
    \int_{\Omega}f(x,u)\eta\,\mathrm{d}x,
    \quad
    \text{for all } \eta \in H^1_0(\Omega).
\end{equation}
Consider the solution operator $L:H^1_0(\Omega)\rightarrow H^1_0(\Omega)$ associated with
the linear problem
\begin{equation} \label{eq:def-L-operator}
\begin{cases}
-\Delta v+\gamma v=\delta u, & \text{in }\Omega,\\
v=0, & \text{on }\partial\Omega.
\end{cases}
\end{equation}
Thus, for each $u\in H^1_0(\Omega)$, we write $L[u]=v$, where $v$ is the unique weak
solution of \eqref{eq:def-L-operator}. By the definition of $B$, we have
$
B[u]=\delta L[u].
$
By setting
$
v=L[u],
$
we have
$
    B[u]=\delta v.
$
Substituting into \eqref{eq:critical-reduced-problem},
we obtain
\begin{equation} \label{eq:first-equation-system-weak}
    \int_{\Omega}\nabla u\nabla \eta\,\mathrm{d}x
    +
    \delta\int_{\Omega}v\eta\,\mathrm{d}x
    =
    \int_{\Omega}f(x,u)\eta\,\mathrm{d}x,
    \quad
    \text{for all }  \eta\in H^1_0(\Omega).
\end{equation}
Moreover, since $v=L[u]$, we also have
\begin{equation} \label{eq:second-equation-system-weak}
    \int_{\Omega}\nabla v\nabla \xi\,\mathrm{d}x
    +
    \gamma\int_{\Omega}v\xi\,\mathrm{d}x
    =
    \delta\int_{\Omega}u\xi\,\mathrm{d}x,
    \quad
    \text{for all }  \xi\in H^1_0(\Omega).
\end{equation}
Therefore, \eqref{eq:first-equation-system-weak} and
\eqref{eq:second-equation-system-weak} show that $(u,v)$ is a weak solution of
system \eqref{SC}. Since $u\neq 0$, it follows that $(u,v)\neq(0,0)$. Thus,
$(u,v)$ is a nontrivial critical point of $\mathcal J$.

\medskip

\noindent\textbf{Step 2: Preservation of the energy under the resolvent correspondence.} 

\medskip

Taking $\xi=v$ in \eqref{eq:second-equation-system-weak}, we have
\begin{equation} \label{eq:test-second-equation-v}
    \int_{\Omega}\vert\nabla v\vert^2\,\mathrm{d}x
    +
    \gamma\int_{\Omega}v^2\,\mathrm{d}x
    =
    \delta\int_{\Omega}uv\,\mathrm{d}x.
\end{equation}
Using \eqref{eq:test-second-equation-v}, we obtain
\begin{align*}
    \mathcal J(u,v)
    &=
    \dfrac{1}{2}\int_{\Omega}\vert\nabla u\vert^2\,\mathrm{d}x
    -
    \dfrac{1}{2}\int_{\Omega}\vert\nabla v\vert^2\,\mathrm{d}x
    -
    \dfrac{\gamma}{2}\int_{\Omega}v^2\,\mathrm{d}x
    +
    \delta\int_{\Omega}uv\,\mathrm{d}x
    -
    \int_{\Omega}F(x,u)\,\mathrm{d}x
    \\
    &=
    \dfrac{1}{2}\int_{\Omega}\vert\nabla u\vert^2\,\mathrm{d}x
    +
    \dfrac{\delta}{2}\int_{\Omega}uv\,\mathrm{d}x
    -
    \int_{\Omega}F(x,u)\,\mathrm{d}x
    \\
    &=
    \dfrac{1}{2}\int_{\Omega}
    \left(
        \vert\nabla u\vert^2+B[u]u
    \right)\,\mathrm{d}x
    -
    \int_{\Omega}F(x,u)\,\mathrm{d}x
    \\
    &=
    \mathcal{I}(u).
\end{align*}

\medskip

\noindent\textbf{Step 3: System critical points satisfy the same resolvent relation.}

\medskip

Let $(\varphi,\psi)$ be any nontrivial critical point of $\mathcal J$. Then
$(\varphi,\psi)$ satisfies
\begin{equation} \label{eq:first-equation-critical-J}
    \int_{\Omega}\nabla\varphi\nabla\eta\,\mathrm{d}x
    +
    \delta\int_{\Omega}\psi\eta\,\mathrm{d}x
    =
    \int_{\Omega}f(x,\varphi)\eta\,\mathrm{d}x,
    \quad
    \text{for all } \eta\in H^1_0(\Omega),
\end{equation}
and
\begin{equation} \label{eq:second-equation-critical-J}
    \int_{\Omega}\nabla\psi\nabla\xi\,\mathrm{d}x
    +
    \gamma\int_{\Omega}\psi\xi\,\mathrm{d}x
    =
    \delta\int_{\Omega}\varphi\xi\,\mathrm{d}x,
    \quad
    \text{for all } \xi\in H^1_0(\Omega).
\end{equation}
By the uniqueness of the solution of the linear problem, equation
\eqref{eq:second-equation-critical-J} implies
$
    \psi=L[\varphi].
$
Consequently,
$
    B[\varphi]=\delta L[\varphi]=\delta\psi.
$
Substituting into
\eqref{eq:first-equation-critical-J}, we obtain
\begin{equation} \label{eq:critical-reduced-varphi}
    \int_{\Omega}\nabla\varphi\nabla\eta\,\mathrm{d}x
    +
    \int_{\Omega}B[\varphi]\eta\,\mathrm{d}x
    =
    \int_{\Omega}f(x,\varphi)\eta\,\mathrm{d}x,
    \quad
    \text{for all } \eta\in H^1_0(\Omega).
\end{equation}
Therefore,
$
\mathcal{I}'(\varphi)=0.
$

We now show that $\varphi\neq 0.$ Suppose, by contradiction, that
$\varphi=0$. Then \eqref{eq:second-equation-critical-J} becomes
\begin{equation*}
    \int_{\Omega}\nabla\psi\nabla\xi\,\mathrm{d}x
    +
    \gamma\int_{\Omega}\psi\xi\,\mathrm{d}x
    =
    0,
    \quad
    \text{for all } \xi\in H^1_0(\Omega).
\end{equation*}
Taking $\xi=\psi$, we have
\begin{equation*}
    \int_{\Omega}\vert\nabla\psi\vert^2\,\mathrm{d}x
    +
    \gamma\int_{\Omega}\psi^2\,\mathrm{d}x
    =
    0.
\end{equation*}
Since $\gamma>0$, it follows that $\psi=0$, which contradicts the fact that
$(\varphi,\psi)$ is nontrivial. Hence $\varphi\neq 0$. Therefore, $\varphi$ is
a nontrivial critical point of $\mathcal{I}$.

Finally, repeating the computation of Step 2 with $(\varphi,\psi)$ instead of
$(u,v)$, and using $\psi=L[\varphi]$, we obtain
\begin{equation} \label{eq:J-varphi-psi-I-varphi}
    \mathcal J(\varphi,\psi)
    =
    \mathcal J(\varphi,L[\varphi])
    =
    \mathcal{I}(\varphi).
\end{equation}

\medskip

\noindent\textbf{Step 4: Comparison of critical energy levels.} 

\medskip

Since $u$ is a ground state solution of the reduced problem and $\varphi$ is a
nontrivial critical point of $\mathcal{I}$, we have
$
    \mathcal{I}(u)\leq \mathcal{I}(\varphi).
$
Thus, Step 2 and
\eqref{eq:J-varphi-psi-I-varphi} imply that
\begin{equation*}
    \mathcal J(u,v)
    =
    \mathcal{I}(u)
    \leq
    \mathcal{I}(\varphi)
    =
    \mathcal J(\varphi,\psi).
\end{equation*}
Therefore, $(u,v)$ minimizes $\mathcal J$ among all nontrivial critical points
of $\mathcal J$. Hence $(u,v)$ is a ground state solution of system \eqref{SC}.
\end{proof}
The following technical lemma replaces the global Ambrosetti–Rabinowitz condition in our setting.

\begin{lemma} \label{eq:superquadratic-F-infty} Assume that \ref{f3} and \ref{f5} hold. 
Then, for every $\theta>2$, there exists $R_{\theta}>0$ such that
\begin{equation*}
\theta F(x,s)\leq sf(x,s),
\quad \text{for all }
x\in\overline{\Omega},\quad \vert s \vert \geq R_{\theta}.
\end{equation*}
Moreover, there exist constants $C_{1,\theta},C_{2,\theta}>0$ such that
\begin{equation*}
F(x,s)\geq C_{1,\theta} \vert s \vert^{\theta}-C_{2,\theta},
\quad \text{for all }
(x,s)\in\overline{\Omega}\times\mathbb{R}.
\end{equation*}
\end{lemma}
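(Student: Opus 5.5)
The plan is to obtain the first inequality directly from \ref{f3} and the exponential growth \ref{f1}, and then get the polynomial lower bound by integrating a differential inequality.

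\textbf{Step 1: an upper bound for $F$ in terms of $f$.} For $\vert s\vert\geq s_0$, \ref{f5} gives $F(x,s)>0$, so \ref{f3} reads $0<F(x,s)\leq K_0\vert f(x,s)\vert$. Also by \ref{f5}, $sf(x,s)>0$ for $s\neq 0$, hence $\vert f(x,s)\vert=sf(x,s)/\vert s\vert$. Combining,
\begin{equation*}
F(x,s)\leq \frac{K_0}{\vert s\vert}\,sf(x,s),\qquad \vert s\vert\geq s_0 .
\end{equation*}

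\textbf{Step 2: the first inequality.} Given $\theta>2$, set $R_\theta:=\max\{s_0,\theta K_0\}$. For $\vert s\vert\geq R_\theta$, Step 1 gives
\begin{equation*}
\theta F(x,s)\leq \frac{\theta K_0}{\vert s\vert}\, sf(x,s)\leq sf(x,s),
\end{equation*}
uniformly in $x\in\overline{\Omega}$. Step 1 is the key observation here: \ref{f3} forces $F/(sf)\to 0$ uniformly at infinity. Note that \ref{f5} alone would only give $\theta=2$, so this step is where \ref{f3} is essential.

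\textbf{Step 3: integrating the differential inequality.} Fix $x$ and take $s\geq R_\theta$. Since $\partial_s F=f$, the first inequality says
\begin{equation*}
\frac{\mathrm{d}}{\mathrm{d}s}\log F(x,s)\geq \frac{\theta}{s},
\end{equation*}
with $F>0$ there. Integrating from $R_\theta$ to $s$ gives
\begin{equation*}
F(x,s)\geq F(x,R_\theta)\,R_\theta^{-\theta}s^\theta .
\end{equation*}
The analogous argument on $(-\infty,-R_\theta]$ gives $F(x,s)\geq F(x,-R_\theta)R_\theta^{-\theta}\vert s\vert^\theta$.

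To make this uniform in $x$, I use that $x\mapsto F(x,\pm R_\theta)$ is continuous and strictly positive on the compact set $\overline{\Omega}$, because $f\in C(\overline{\Omega}\times\mathbb{R})$ and \ref{f5} holds. So it has a positive minimum $m_\theta$, and I take $C_{1,\theta}:=m_\theta R_\theta^{-\theta}$.

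\textbf{Step 4: bounded $s$.} For $\vert s\vert\leq R_\theta$ we have $F\geq 0$, so
\begin{equation*}
F(x,s)\geq C_{1,\theta}\vert s\vert^\theta - C_{1,\theta}R_\theta^\theta .
\end{equation*}
Taking $C_{2,\theta}:=C_{1,\theta}R_\theta^\theta+1$ finishes the proof.

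The only delicate points are the sign bookkeeping for negative $s$ and the uniform positivity of $F(\cdot,\pm R_\theta)$, which comes from compactness of $\overline{\Omega}$. I do not expect a genuine obstacle.
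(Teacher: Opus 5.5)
Your proof is correct and follows the paper's argument essentially step for step. You take $R_\theta=\max\{s_0,\theta K_0\}$ and use $|s|\,|f(x,s)|=sf(x,s)$ from \ref{f5}; your logarithmic-derivative integration is the same computation as the paper's $\Psi_x'\geq 0$; and compactness of $\overline{\Omega}$ gives the uniform constant. The only slip is your opening sentence invoking \ref{f1}, which the argument never uses: only \ref{f3}, \ref{f5} and the continuity of $f$ are needed, exactly as in the statement.
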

\begin{proof}
Indeed, by \ref{f5}, we have
\begin{equation*}
F(x,s)>0
\quad\text{and}\quad
s f(x,s)>0,
\quad
x\in\overline{\Omega},\quad s\neq0.
\end{equation*}
In particular, for $s\neq0$,
\begin{equation*}
\vert s \vert\, \vert f(x,s) \vert = s f(x,s).
\end{equation*}
Fix $\theta>2$. By \ref{f3}, there exist $s_0,K_0>0$ such that
\begin{equation*}
F(x,s)\leq K_0 \vert f(x,s) \vert,
\quad
x\in\overline{\Omega},\quad \vert s \vert \geq s_0.
\end{equation*}
Choose
$
R>\max\{s_0,\theta K_0\}.
$
Then, for every $x\in\overline{\Omega}$ and every $\vert s \vert \geq R$, we have
\begin{equation*}
\theta F(x,s)
\leq
\theta K_0 \vert f(x,s) \vert
\leq
\vert s \vert \, \vert f(x,s) \vert
=
s f(x,s).
\end{equation*}
Therefore,
\begin{equation}\label{eq:AR-infty-theta}
\theta F(x,s)\leq s f(x,s),
\quad
x\in\overline{\Omega},\quad \vert s \vert\geq R.
\end{equation}

Now fix $x\in\overline{\Omega}$. For $s\geq R$, define
\begin{equation*}
\Psi_x(s)=\frac{F(x,s)}{s^\theta}.
\end{equation*}
Since $F_s(x,s)=f(x,s)$, we get
\begin{equation*}
\Psi_x'(s)
=
\frac{s f(x,s)-\theta F(x,s)}{s^{\theta+1}}.
\end{equation*}
By \eqref{eq:AR-infty-theta}, $\Psi_x'(s)\geq0$ for every $s\geq R$. Hence $\Psi_x$ is
nondecreasing on $[R,+\infty)$, and so
\begin{equation*}
\frac{F(x,s)}{s^\theta}
\geq
\frac{F(x,R)}{R^\theta},
\quad
s\geq R.
\end{equation*}
Since $F(x,R)>0$ for every $x\in\overline{\Omega}$ and $F$ is continuous, compactness of
$\overline{\Omega}$ gives
\begin{equation*}
m_+ \coloneqq \min_{x\in\overline{\Omega}}F(x,R)>0.
\end{equation*}
Therefore,
\begin{equation}\label{eq:F-positive-side}
F(x,s)\geq \frac{m_+}{R^\theta}s^\theta,
\quad
x\in\overline{\Omega},\quad s\geq R.
\end{equation}

We argue similarly on the negative side. For $r\geq R$, define
\begin{equation*}
\Phi_x(r)=\frac{F(x,-r)}{r^\theta}.
\end{equation*}
Then
\begin{equation*}
\Phi_x'(r)
=
\frac{(-r)f(x,-r)-\theta F(x,-r)}{r^{\theta+1}}.
\end{equation*}
Using \eqref{eq:AR-infty-theta} with $s=-r$, we obtain $\Phi_x'(r)\geq0$ for every
$r\geq R$. Hence
\begin{equation*}
\frac{F(x,-r)}{r^\theta}
\geq
\frac{F(x,-R)}{R^\theta},
\quad
r\geq R.
\end{equation*}
Again, since $F(x,-R)>0$ and $F$ is continuous, there exists
\begin{equation*}
m_- \coloneqq \min_{x\in\overline{\Omega}}F(x,-R)>0.
\end{equation*}
Thus,
\begin{equation}\label{eq:F-negative-side}
F(x,s)\geq \frac{m_-}{R^\theta}\vert s\vert ^\theta,
\quad
x\in\overline{\Omega},\quad s\leq -R.
\end{equation}

Set
\begin{equation*}
C_{1,\theta} \coloneqq \min\left\{\frac{m_+}{R^\theta},\frac{m_-}{R^\theta}\right\}>0.
\end{equation*}
From \eqref{eq:F-positive-side} and \eqref{eq:F-negative-side}, we have
\begin{equation*}
F(x,s)\geq C_1 \vert s \vert^\theta,
\quad
x\in\overline{\Omega},\quad \vert s\vert \geq R.
\end{equation*}
Since $F(x,s)\geq0$ for $s\neq0$ and $F(x,0)=0$, choosing
$
C_{2,\theta}\coloneqq C_{1,\theta}R^\theta
$
we obtain, for $\vert s\vert <R$,
\begin{equation*}
C_{1, \theta} \vert s\vert ^\theta-C_{2,\theta}\leq0\leq F(x,s),
\end{equation*}
which finishes the proof.
\end{proof}

\section{Local-minimum geometry at the origin: ground state solutions}\label{sec3C}
\label{sec:positiveandnegativesecC}
In this section, we prove the existence of a nontrivial ground state solution of the reduced nonlocal Dirichlet problem \eqref{nonlocalproblem}, that is, 
\begin{equation*} 
\begin{cases}
-\Delta u + B[u]  =  f(x,u) & \text{in } \Omega, \\
u  =  0 & \text{on } \partial \Omega.
\end{cases}
\end{equation*}
The energy functional $\mathcal{I}: H^{1}_{0}(\Omega) \rightarrow \mathbb{R}$ is given by
\begin{equation*}
    \mathcal{I}(u) = \dfrac{1}{2} \Vert u \Vert_{B}^{2} - \int_{\Omega} F(x,u) \, \mathrm{d} x,
\end{equation*}
where, as before,
$
F(x,s)=\int_0^s f(x,\tau)\, \mathrm{d} \tau
$ is the primitive of $f.$ 

\subsection{Mountain pass geometry}

We first show that the problem admits a nontrivial solution. To this end, we verify that $\mathcal{I}$ satisfies the conditions of the Mountain Pass Theorem.

\begin{definition} A sequence $(u_{n})_{n \in \mathbb{N}} \subset H^{1}_{0}(\Omega)$ is called a \emph{Palais-Smale sequence} at level $c \in \mathbb{R}$ (denoted by $(PS)_c$) if
\begin{equation*}
    \mathcal{I}(u_n) \rightarrow c \quad \text{and} \quad \mathcal{I}'(u_n) \rightarrow 0, \quad \text{as } n \rightarrow + \infty.
\end{equation*}

\end{definition}
Our goal is to show that $\mathcal{I}$ possesses a critical value $c$. To this end, we employ the Mountain Pass Theorem of Ambrosetti \& Rabinowitz \cite[Theorem~2.1]{AMBROSETTI1973349}. This theorem provides a $(PS)_c$ sequence at the minimax level
\[
c \coloneqq \inf_{\Tilde{\gamma}\in\Gamma}\max_{t\in[0,1]}\mathcal{I}(\Tilde{\gamma}(t)),
\]
where
\[
\Gamma
\coloneqq
\{\Tilde{\gamma}\in C([0,1],H^1_0(\Omega)):\Tilde{\gamma}(0)=0,\ \mathcal{I}(\Tilde{\gamma}(1))<0\}.
\]
The compactness result below the critical Trudinger-Moser energy level will then imply the convergence of this sequence to a nontrivial critical point.

The next lemma establishes that the functional satisfies the Mountain Pass geometry.

\begin{lemma} \label{mountainpassC} Assume that \ref{f1}-\ref{f3} and \ref{f5} hold. Then, the functional $\mathcal{I}$ satisfies the following geometric conditions:
\begin{boldromanenum}
    \item $\mathcal{I}(0) = 0$ and there exist $r_{0} > 0$ and $\varepsilon > 0$ such that $\mathcal{I}(u) \geq \varepsilon,$ for $\Vert u \Vert_{B} = r_{0};$
    \item There exists $e\in H^1_0(\Omega)$ with $\Vert e\Vert_B  >r_0$ such that $\mathcal{I}(e)<0$.
\end{boldromanenum}
\end{lemma}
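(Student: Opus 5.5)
For (i), I combine the behaviour of $F$ near zero with the exponential growth at infinity. By \ref{f2}, given $\varepsilon_0>0$ there is $\rho>0$ with $\vert F(x,s)\vert\le \frac{\varepsilon_0}{2}s^2$ for $\vert s\vert\le\rho$. By \ref{f1}, fixing $\alpha>\alpha_0$ and $q>2$, there is $C>0$ with $\vert F(x,s)\vert\le C\vert s\vert^{q}\exp(\alpha s^2)$ for $\vert s\vert\ge\rho$; continuity of $f$ on compact sets handles the intermediate range. Hence
\[
F(x,s)\le \frac{\varepsilon_0}{2}s^2 + C\vert s\vert^{q}\exp(\alpha s^2),\qquad (x,s)\in\overline{\Omega}\times\mathbb{R}.
\]
For the quadratic term, the Poincaré inequality and $\Vert\nabla u\Vert_{L^2(\Omega)}\le\Vert u\Vert_B$ give $\int_\Omega u^2\,\mathrm{d}x\le \lambda_1^{-1}\Vert u\Vert_B^2$.

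For the exponential term, I apply Hölder with exponents $2,2$:
\[
\int_\Omega \vert u\vert^q \exp(\alpha u^2)\,\mathrm{d}x\le \Vert u\Vert_{L^{2q}(\Omega)}^{q}\left(\int_\Omega \exp(2\alpha u^2)\,\mathrm{d}x\right)^{1/2}.
\]
If $\Vert u\Vert_B=r_0$ with $2\alpha r_0^2\le 4\pi$, then writing $u=r_0\,(u/\Vert u\Vert_B)$, Lemma~\ref{eq:TMeditadaC} bounds the exponential integral by $C\vert\Omega\vert$ uniformly. The Sobolev embedding $H_0^1(\Omega)\hookrightarrow L^{2q}(\Omega)$, together with the norm equivalence, gives $\Vert u\Vert_{L^{2q}}^q\le C\Vert u\Vert_B^q$. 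Therefore
\[
\mathcal{I}(u)\ge \Bigl(\tfrac12-\tfrac{\varepsilon_0}{2\lambda_1}\Bigr)r_0^2 - C r_0^{q}.
\]
Choosing $\varepsilon_0<\lambda_1$ and then $r_0$ small (also satisfying $r_0^2\le 2\pi/\alpha$) yields $\mathcal{I}(u)\ge\varepsilon>0$ on the sphere $\Vert u\Vert_B=r_0$. Also $\mathcal{I}(0)=0$, since $F(x,0)=0$. The only delicate point is making the Trudinger-Moser bound uniform on the sphere, and this is exactly what Lemma~\ref{eq:TMeditadaC} in the $B$-norm provides.

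For (ii), I fix any nonzero $\varphi\in H_0^1(\Omega)$, for instance $\varphi\in C_c^\infty(\Omega)$, and apply Lemma~\ref{eq:superquadratic-F-infty} with some $\theta>2$, which gives $F(x,s)\ge C_{1,\theta}\vert s\vert^\theta - C_{2,\theta}$. Then
\[
\mathcal{I}(t\varphi)\le \frac{t^2}{2}\Vert\varphi\Vert_B^2 - C_{1,\theta}t^\theta\int_\Omega\vert\varphi\vert^\theta\,\mathrm{d}x + C_{2,\theta}\vert\Omega\vert\to-\infty \quad (t\to+\infty).
\]
Taking $e=t\varphi$ with $t$ large ensures both $\Vert e\Vert_B>r_0$ and $\mathcal{I}(e)<0$, which completes the plan. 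This step only needs \ref{f3} and \ref{f5}, through that lemma.
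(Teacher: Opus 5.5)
Your proposal is correct and follows essentially the same route as the paper. For (i) you use the combined bound $F(x,s)\le \frac{\varepsilon_0}{2}s^2+C|s|^q\exp(\alpha s^2)$ from \ref{f1}--\ref{f2}, H\"older, and the $B$-norm Trudinger--Moser inequality (Lemma~\ref{eq:TMeditadaC}) on a small sphere; for (ii) you apply Lemma~\ref{eq:superquadratic-F-infty} along a ray $t\varphi$. Your explicit Poincar\'e factor $\varepsilon_0/\lambda_1$ is slightly more careful than the paper's ``$C_1<1/2$'', which tacitly absorbs the $L^2$-embedding constant.
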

\begin{proof}
We first prove that $(i)$ is satisfied. Indeed, let \(\alpha>\alpha_0\), \(p>1\), and \(q>2\). By \ref{f1} and \ref{f2}, there exist constants \(C_1,C_2>0\), with \(C_1<1/2\), such that
\begin{equation*}
    \vert F(x,s) \vert \leq C_{1} \vert s \vert^{2} + C_{2} \vert s \vert^{q} \exp{(\alpha s^{2})}, \quad \text{for all }  (x,s) \in \overline{\Omega} \times \mathbb{R}.
\end{equation*} 
By H\"older's inequality and the Sobolev embeddings in dimension two, we obtain, for $\Vert u \Vert_{B} = r$,
\begin{equation*}
\int_\Omega \vert F(x,u) \vert \,\mathrm{d}x
\leq C_1 r^2 + C_2 r^q
\left(\int_\Omega \exp{(p\alpha u^2)}\,\mathrm{d}x\right)^{1/p}.    
\end{equation*}
Choosing $r_0 > 0$ small enough so that $p\alpha r_0^2<4\pi$, Lemma \ref{eq:TMeditadaC} gives 
\begin{equation*}
\mathcal{I}(u)\geq \frac{1}{2} \Vert u \Vert_B^2-C_1 \Vert u \Vert_{B}^{2}-C_2 \Vert u \Vert_B^q.    
\end{equation*}
For $0 < r_{0} < \min \left\{\left( \frac{1/2 - C_{1} }{C_{2}}\right)^{1/(q-2)}, \left(\frac{4 \pi}{p \alpha} \right)^{1/2}\right\}$ sufficiently small, this yields $\mathcal{I}(u)\geq \varepsilon>0$, whenever $\Vert u \Vert_{B} = r_{0}$.

We now prove item $(ii)$. Fix $\varphi\in C_c^\infty(\Omega)\setminus\{0\}$ and let $\theta>2$. By Lemma \ref{eq:superquadratic-F-infty}, there exist constants $C_{1,\theta},C_{2,\theta}>0$ such that
\begin{equation*}
F(x,s)\geq C_{1,\theta}\vert s\vert ^\theta-C_{2,\theta},
\quad \text{for all }
(x,s)\in\overline{\Omega}\times\mathbb{R}.
\end{equation*}
Therefore, for every $t>0$,
\begin{align*}
\mathcal{I}(t\varphi)
&=
\frac{t^2}{2}\Vert \varphi \Vert_B^2
-
\int_\Omega F(x,t\varphi)\,\mathrm{d}x\\
&\leq
\frac{t^2}{2} \Vert \varphi \Vert_B^2
-
C_{1,\theta}t^\theta
\int_\Omega \vert \varphi \vert^\theta\,\mathrm{d}x
+
C_{2,\theta}\vert \Omega \vert\\
&=
\frac{t^2}{2}\Vert \varphi \Vert_B^2
-
C_{1,\theta}t^\theta
\Vert \varphi \Vert_{L^\theta(\Omega)}^\theta
+
C_{2,\theta}\vert \Omega \vert.
\end{align*}
Since $\theta>2$ and $\varphi\neq 0$, the negative term of order $t^\theta$ dominates the quadratic term. Hence,
\begin{equation*}
\mathcal{I}(t\varphi)\rightarrow-\infty
\quad\text{as }t\rightarrow+\infty.
\end{equation*}
Consequently, there exists $t_0>0$ sufficiently large such that
\begin{equation*}
\Vert t_0\varphi \Vert_B>r_0
\quad\text{and}\quad
\mathcal{I}(t_0\varphi)<0.
\end{equation*}
Setting $e=t_0\varphi$, we conclude that $\Vert e \Vert_B>r_0$ and $\mathcal{I}(e)<0$. Thus, condition (ii) holds.
\end{proof}

\subsection{The critical mountain pass level estimate}

Due to the loss of compactness in the critical case, we need the following technical lemmas.
\begin{lemma}\label{eq:lemadonivelc}
Assume that conditions \ref{f3}-\ref{f5} hold. Then
\[
    c<\frac{2\pi}{\alpha_0}.
\]
\end{lemma}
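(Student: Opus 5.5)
We argue by contradiction in the classical Moser-sequence fashion, working with the normalized functions $H_n=G_n/\Vert G_n\Vert_B$ built from \eqref{eq:mosersequenced}, so that $\Vert H_n\Vert_B=1$ and, by \eqref{eq:normassintoticad}, $\Vert G_n\Vert_B^2=1+O(1/\log n)$. Since $\mathcal{I}(tH_n)\to-\infty$ as $t\to+\infty$ (by Lemma~\ref{eq:superquadratic-F-infty}, exactly as in Lemma~\ref{mountainpassC}(ii)), the path $t\mapsto tT H_n$ with $T$ large belongs to $\Gamma$. Hence $c\le \max_{t\ge 0}\mathcal{I}(tH_n)$ for every $n$. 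It therefore suffices to find one $n$ with $\max_{t\ge0}\mathcal{I}(tH_n)<2\pi/\alpha_0$. Suppose instead that for every $n$ this maximum is $\ge 2\pi/\alpha_0$. Using $F\ge0$ from \ref{f5}, the maximum is attained at some $t_n>0$, and we have
\begin{equation*}
\frac{t_n^2}{2}\ge \frac{2\pi}{\alpha_0},\qquad\text{so}\qquad t_n^2\ge\frac{4\pi}{\alpha_0}.
\end{equation*}
We also have the criticality identity $t_n^2=\int_\Omega f(x,t_nH_n)t_nH_n\,\mathrm{d}x$.

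The next step is to bound $t_n$ from above. We restrict the integral in the criticality identity to $B(x_0,d/n)$, where $H_n^2=\log n/(2\pi\Vert G_n\Vert_B^2)$. By \ref{f4}, for any $M>0$ we have $sf(x,s)\ge M\exp(\alpha_0s^2)$ for $s$ large. Since $t_nH_n\to\infty$ on the ball, this gives
\begin{equation*}
t_n^2\ge M\pi d^2\exp\Big(\log n\Big[\frac{\alpha_0t_n^2}{2\pi\Vert G_n\Vert_B^2}-2\Big]\Big).
\end{equation*}
This forces $t_n$ to be bounded, and then $t_n^2\to 4\pi/\alpha_0$: a limit strictly above $4\pi/\alpha_0$ would make the right-hand side blow up.

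The main obstacle is the sharp step, which is to get a contradiction at $t_n^2\to4\pi/\alpha_0$. Here the $B$-norm correction matters. From the lower bound $t_n^2\ge4\pi/\alpha_0$ and $\Vert G_n\Vert_B^2=1+O(1/\log n)$, the exponent above is
\begin{equation*}
\log n\Big(\frac{\alpha_0t_n^2}{2\pi\Vert G_n\Vert_B^2}-2\Big)\ge -2\log n\cdot O\Big(\frac{1}{\log n}\Big)=-C,
\end{equation*}
which is bounded below uniformly in $n$. This is precisely why the rate $O(1/\log n)$, and not merely $o_n(1)$, from \eqref{eq:normassintoticad} is needed. Consequently
\begin{equation*}
\frac{4\pi}{\alpha_0}+o_n(1)\ge t_n^2\ge M\pi d^2e^{-C}.
\end{equation*}
Since $M$ is arbitrary by \ref{f4}, we let $M\to\infty$ and reach a contradiction. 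If the uniform constant $C$ from the inner-ball estimate is delicate, the fallback is the standard de Figueiredo–Miyagaki–Ruf refinement: also use the annulus $d/n\le|x-x_0|\le d$ and the change of variables $s=\log(d/|x-x_0|)$, which produces a positive contribution of order one times $M$. Either way, the required ingredient is the quantitative normalization of Lemma~\ref{eq:lemadanorma0711C}. Condition \ref{f3} is used only through Lemma~\ref{eq:superquadratic-F-infty}, to place the rays in $\Gamma$.
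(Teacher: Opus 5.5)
Your proof is correct and follows the paper's argument essentially step for step. You argue by contradiction along the Moser sequence, restrict the criticality identity to $B(x_0,d/n)$, obtain boundedness of $t_n$ and $t_n^2\to 4\pi/\alpha_0$, and use the rate $\Vert G_n\Vert_B^2=1+O(1/\log n)$ to keep the exponent bounded below. Choosing the constant in \ref{f4} large then gives the contradiction; this corresponds to the paper's choice $\beta_0>4e^{2C_G}/(\alpha_0 d^2)$. The only difference is cosmetic: you normalize to $H_n=G_n/\Vert G_n\Vert_B$ instead of carrying the factor $1+\varepsilon_n$. The annulus fallback you mention is not needed, since \ref{f4} is a full limit equal to $+\infty$.
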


\begin{proof}
Taking $\varphi=G_n$ in the proof of Lemma \ref{mountainpassC}(ii), we obtain
\[
    \mathcal{I}(tG_n)\rightarrow-\infty
    \quad\text{as }t\rightarrow+\infty.
\]
Consequently,
\[
    c\leq \max_{t>0}\mathcal{I}(tG_n)
\]
for every $n\in\mathbb{N}$. Hence, it is sufficient to prove that, for
some $n\in\mathbb{N}$,
\begin{equation}\label{eq:061101C}
    \max_{t>0}\mathcal{I}(tG_n)
    <\frac{2\pi}{\alpha_0}.
\end{equation}

Suppose, by contradiction, that \eqref{eq:061101C} does not hold.
Then, up to a subsequence, still denoted by $(G_n)_{n\in\mathbb{N}}$,
we have
\begin{equation}\label{eq:061102C}
    \max_{t>0}\mathcal{I}(tG_n)
    \geq\frac{2\pi}{\alpha_0}.
\end{equation}
Since the map $t\mapsto\mathcal{I}(tG_n)$ is continuous and tends to
$-\infty$ as $t\to+\infty$, there exists $t_n>0$ such that
\begin{equation}\label{eq:061103C}
    \mathcal{I}(t_nG_n)
    =\max_{t>0}\mathcal{I}(tG_n).
\end{equation}

Set
\[
    \varepsilon_n
    \coloneqq \int_\Omega B[G_n]G_n\,\mathrm{d}x.
\]
By the positivity of $B$ and the refined estimate for the scaled Moser
sequence (see Lemma \ref{eq:lemadanorma0711C} and \eqref{eq:normassintoticad}), there exists a constant $C_G \coloneqq \dfrac{d^{2} \Vert B \Vert_{\mathcal L(L^2(\Omega),L^2(\Omega))}}{4}>0$ such that
\[
    0<\varepsilon_n\leq\frac{C_G}{\log n}
\]
for all sufficiently large $n$. In particular,
\[
    \Vert G_n \Vert_B^2=1+\varepsilon_n.
\]
By condition \ref{f5}, we have $F(x,t_nG_n)\geq0$. Thus, by
\eqref{eq:061102C} and \eqref{eq:061103C},
\begin{align*}
    \frac{2\pi}{\alpha_0}
    &\leq \mathcal{I}(t_nG_n)\\
    &=\frac{t_n^2}{2} \Vert G_n \Vert_B^2
      -\int_\Omega F(x,t_nG_n)\,\mathrm{d}x\\
    &=\frac{t_n^2}{2}(1+\varepsilon_n)
      -\int_\Omega F(x,t_nG_n)\,\mathrm{d}x\\
    &\leq\frac{t_n^2}{2}(1+\varepsilon_n).
\end{align*}
Therefore,
\begin{equation}\label{eq:071102C}
    \frac{4\pi}{\alpha_0}
    \leq t_n^2(1+\varepsilon_n).
\end{equation}
Since $\varepsilon_n\to0$, we have $1+\varepsilon_n\leq2$ for all
sufficiently large $n$. Hence,
\[
    t_n^2\geq\frac{2\pi}{\alpha_0},
\]
and therefore $(t_n)_{n\in\mathbb{N}}$ is bounded away from zero. Since $t_n$ is a maximum point of the map
$t\mapsto\mathcal{I}(tG_n)$, we have
\[
    \left.\frac{\mathrm{d}}{\mathrm{d}t}
    \mathcal{I}(tG_n)\right|_{t=t_n}=0.
\]
Multiplying this identity by $t_n$, we obtain
\begin{equation}\label{eq:071103C}
\begin{aligned}
    t_n^2(1+\varepsilon_n)
    &=\int_\Omega f(x,t_nG_n)t_nG_n\,\mathrm{d}x\\
    &\geq
    \int_{B(x_0,d)}
    f(x,t_nG_n)t_nG_n\,\mathrm{d}x,
\end{aligned}
\end{equation}
where we used condition \ref{f5} and the fact that $t_nG_n\geq0$.

Let $B(x_0,d)\subset\Omega$ be the ball fixed in
\eqref{eq:mosersequenced}. Since condition \ref{f4} is stated as a
limit equal to $+\infty$, we may fix
\[
    \beta_0>
    \frac{4e^{2C_G}}{\alpha_0d^2}.
\]
Then, by \ref{f4}, there exists $s_0>0$ such that
\begin{equation}\label{eq:081101C}
    f(x,s)s
    \geq\beta_0\exp(\alpha_0s^2),
    \qquad
    x\in\Omega,\quad s\geq s_0.
\end{equation}

By the change of variables $x=x_0+dy$, we obtain from
\eqref{eq:071103C}
\begin{align*}
    t_n^2(1+\varepsilon_n)
    &\geq
    d^2\int_{B(0,1)}
    f\bigl(x_0+dy,t_n\widetilde G_n(y)\bigr)
    t_n\widetilde G_n(y)\,\mathrm{d}y\\
    &\geq
    d^2\int_{B(0,1/n)}
    f\left(
        x_0+dy,
        \frac{t_n}{\sqrt{2\pi}}(\log n)^{1/2}
    \right)
    \frac{t_n}{\sqrt{2\pi}}(\log n)^{1/2}
    \,\mathrm{d}y.
\end{align*}

Since $(t_n)_{n \in \mathbb{N}}$ is bounded away from zero,
\[
    \frac{t_n}{\sqrt{2\pi}}(\log n)^{1/2}
    \longrightarrow+\infty.
\]
Thus, for all sufficiently large $n$, estimate
\eqref{eq:081101C} applies on $B(0,1/n)$, and therefore
\begin{align}
    t_n^2(1+\varepsilon_n)
    &\geq
    \beta_0d^2
    \exp\left(
        \frac{\alpha_0t_n^2}{2\pi}\log n
    \right)
    |B(0,1/n)|
    \nonumber\\
    &=
    \beta_0\pi d^2
    \exp\left[
        2\left(
            \frac{\alpha_0t_n^2}{4\pi}-1
        \right)\log n
    \right].
    \label{eq:071101C}
\end{align}

We claim that $(t_n)_{n\in\mathbb{N}}$ is bounded. Indeed, suppose,
by contradiction, that $t_n\to+\infty$ along a subsequence. Taking
logarithms in \eqref{eq:071101C}, we obtain
\[
    \log\bigl(t_n^2(1+\varepsilon_n)\bigr)
    \geq
    \log(\beta_0\pi d^2)
    +
    2\left(
        \frac{\alpha_0t_n^2}{4\pi}-1
    \right)\log n.
\]
The right-hand side grows as $t_n^2\log n$, while the left-hand side
grows only logarithmically in $t_n$, which is impossible. Hence,
$(t_n)_{n \in \mathbb{N}}$ is bounded.

Up to a subsequence, there exists $t_0\geq0$ such that
$t_n\to t_0$. Passing to the limit in \eqref{eq:071102C}, we obtain
\[
    t_0^2\geq\frac{4\pi}{\alpha_0}.
\]
We claim that the reverse inequality also holds. Indeed, suppose that
\[
    t_0^2>\frac{4\pi}{\alpha_0}.
\]
Then there exists $c_0>0$ such that, for all sufficiently large $n$,
\[
    2\left(
        \frac{\alpha_0t_n^2}{4\pi}-1
    \right)\geq c_0.
\]
Consequently, the right-hand side of \eqref{eq:071101C} tends to
$+\infty$, while the left-hand side remains bounded, which is a
contradiction. Therefore,
\[
    t_0^2\leq\frac{4\pi}{\alpha_0},
\]
and hence
\begin{equation*}
    t_0^2=\frac{4\pi}{\alpha_0}.
\end{equation*}

We now return to \eqref{eq:071102C}. It follows that
\[
    \frac{\alpha_0t_n^2}{4\pi}
    \geq\frac{1}{1+\varepsilon_n}.
\]
Therefore,
\begin{align*}
    2\left(
        \frac{\alpha_0t_n^2}{4\pi}-1
    \right)\log n
    &\geq
    2\left(
        \frac{1}{1+\varepsilon_n}-1
    \right)\log n\\
    &=
    -\frac{2\varepsilon_n}{1+\varepsilon_n}\log n\\
    &\geq-2C_G.
\end{align*}
Combining this estimate with \eqref{eq:071101C}, we obtain
\[
    t_n^2(1+\varepsilon_n)
    \geq\beta_0\pi d^2e^{-2C_G}.
\]
Letting $n\to+\infty$ and recalling that
\[
    t_n^2\rightarrow\frac{4\pi}{\alpha_0}
    \qquad\text{and}\qquad
    \varepsilon_n\rightarrow0,
\]
we conclude that
\[
    \frac{4\pi}{\alpha_0}
    \geq\beta_0\pi d^2e^{-2C_G}.
\]
Hence,
\[
    \beta_0
    \leq\frac{4e^{2C_G}}{\alpha_0d^2},
\]
which contradicts the choice of $\beta_0$.

Therefore, \eqref{eq:061101C} holds for some $n\in\mathbb{N}$, and
consequently
\[
    c<\frac{2\pi}{\alpha_0}.
\]
This completes the proof.
\end{proof}

\begin{lemma} \label{eq:lemadecrescenteTHC}
Assume that condition \ref{f2} and \ref{f6} hold.
Then, for each $x\in\Omega$, the map
\begin{equation*}
s \mapsto \frac{1}{2} sf(x,s)-F(x,s)    
\end{equation*}
is increasing on $(0,+\infty)$ and decreasing on $(-\infty,0)$. In particular,
\begin{equation*}
\frac{1}{2} sf(x,s)-F(x,s)\geq 0,    
\end{equation*}
for every $s \neq 0$.
\end{lemma}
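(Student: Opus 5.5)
Set $H(x,s)\coloneqq \frac12 sf(x,s)-F(x,s)$, fix $x\in\Omega$, and compare values at two points. The plan is to avoid differentiating $f$, since $f$ is only continuous, and to write the increment of $H$ as an integral in which \ref{f6} fixes the sign of the integrand. Take $0<s<t$. Because $F(x,t)-F(x,s)=\int_s^t f(x,\tau)\,\mathrm{d}\tau$, we have
\begin{equation*}
H(x,t)-H(x,s)=\frac12\bigl(tf(x,t)-sf(x,s)\bigr)-\int_s^t f(x,\tau)\,\mathrm{d}\tau .
\end{equation*}
For $\tau\in(s,t)$, \ref{f6} gives $f(x,\tau)<\tau f(x,t)/t$, since $f(x,\tau)/\tau<f(x,t)/t$ for positive arguments. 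Hence
\begin{equation*}
\int_s^t f(x,\tau)\,\mathrm{d}\tau<\frac{f(x,t)}{t}\cdot\frac{t^2-s^2}{2}=\frac12 tf(x,t)-\frac{s^2}{2t}f(x,t).
\end{equation*}
Substituting this, we get $H(x,t)-H(x,s)>\frac{s}{2}\bigl(\frac{s}{t}f(x,t)-f(x,s)\bigr)=\frac{s^2}{2}\bigl(\frac{f(x,t)}{t}-\frac{f(x,s)}{s}\bigr)>0$, again by \ref{f6}. So $H(x,\cdot)$ is strictly increasing on $(0,+\infty)$.

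On $(-\infty,0)$ we argue the same way with $s<t<0$. There $f(x,\tau)/\vert\tau\vert$ is strictly increasing, so $g(\tau)\coloneqq f(x,\tau)/\tau=-f(x,\tau)/\vert\tau\vert$ is strictly decreasing. An alternative is to apply the positive-side argument to $\tilde f(x,r)=-f(x,-r)$, for which $\tilde F(x,r)=F(x,-r)$ and $r\tilde f(x,r)=(-r)f(x,-r)$; then $H(x,-r)=\tilde H(x,r)$. However, the transformed quotient $\tilde f(x,r)/r=f(x,-r)/(-r)=g(-r)$ is increasing in $r$ exactly when $g$ is decreasing on the negative axis, and $g$ is decreasing there. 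The bookkeeping needs care, so I would redo the integral estimate directly. For $s<\tau<t<0$, write $f(x,\tau)=\tau g(\tau)$. Since $\tau<0$ and $g(\tau)\le g(s)$, we get $\tau g(\tau)\ge \tau g(s)$. Integrating over $(s,t)$ yields $\int_s^t f\,\mathrm{d}\tau\ge g(s)\frac{t^2-s^2}{2}$, and then
\begin{equation*}
H(x,t)-H(x,s)\le \frac12 t^2 g(t)-\frac12 s^2g(s)-\frac{g(s)}{2}(t^2-s^2)=\frac{t^2}{2}\bigl(g(t)-g(s)\bigr)<0 .
\end{equation*}
Thus $H$ is decreasing on $(-\infty,0)$, as claimed.

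For the sign conclusion, use \ref{f2}: $f(x,s)/s\to0$ as $s\to0$ gives $sf(x,s)=o(s^2)$ and $F(x,s)=o(s^2)$, so $H(x,s)\to0=H(x,0)$ as $s\to0^{\pm}$. Since $H$ is increasing on $(0,\infty)$, $H(x,s)\ge\lim_{r\to0^+}H(x,r)=0$ for $s>0$. Since $H$ is decreasing on $(-\infty,0)$, $H(x,s)\ge\lim_{r\to0^-}H(x,r)=0$ for $s<0$. Continuity of $f$ alone already gives $H\to0$, so \ref{f2} is only reassurance here.

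The main obstacle is the sign bookkeeping on the negative axis, where \ref{f6} is phrased with $\vert s\vert$. I would handle it through the auxiliary quotient $g$ as above rather than by a reflection argument.
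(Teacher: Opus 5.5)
Your proof is correct and follows essentially the same route as the paper. Both arguments compare $H(x,\cdot)$ at two points, write $F(x,t)-F(x,s)=\int_s^t f(x,\tau)\,\mathrm{d}\tau$, bound the integrand pointwise via \ref{f6}, and then let $s\to0$; on the negative side your bound $f(x,\tau)\ge \tau g(s)$ is exactly the paper's $f(x,\tau)>\frac{f(x,s)}{|s|}|\tau|$, and where the paper invokes \ref{f2} for $H(x,s)\to0$ you correctly observe that continuity of $f$ already suffices.
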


\begin{proof}
First, we consider the case $s > 0.$ If $0 < s < t,$ then
\begin{align*}
\dfrac{s}{2}f(x ,s) - F(x ,s)
&=  \dfrac{f(x ,s)}{2\vert s \vert} \vert s \vert s - F(x ,t) + \int_{s}^{t} \dfrac{f(x ,\tau)}{\vert \tau \vert} \vert \tau \vert \; \mathrm{d}\tau \\ 
& <  \dfrac{f(x ,t)}{2\vert t \vert} \vert s \vert s - F(x ,t) + \dfrac{f(x ,t)}{\vert t \vert} \left( \dfrac{ \vert t \vert t}{2} - \dfrac{ \vert s \vert s}{2} \right) \\ 
& =  \dfrac{t}{2} f(x ,t) - F(x ,t).
\end{align*}

\noindent On the other hand, if $s < t < 0$, then it follows from \ref{f6} that
\[
\dfrac{f(x ,s)}{\vert s \vert} < \dfrac{f(x ,t)}{\vert t \vert}.
\]
Thus, we have
\begin{align*}
\dfrac{s}{2}f(x ,s) - F(x ,s)
&=  \dfrac{f(x ,s)}{2\vert s \vert} \vert s \vert s - F(x ,t) + \int_{s}^{t} \dfrac{f(x ,\tau)}{\vert \tau \vert} \vert \tau \vert \; \mathrm{d}\tau \\ 
& >  \dfrac{f(x ,s)}{2\vert s \vert} \vert s \vert s - F(x ,t) + \dfrac{f(x ,s)}{\vert s \vert} \left( \dfrac{ \vert t \vert t}{2} - \dfrac{ \vert s \vert s}{2} \right) \\ 
& >  \dfrac{t}{2} f(x ,t) - F(x ,t).
\end{align*}
The last inequality follows from the estimate 
$$
\dfrac{f(x ,s)}{2\vert s \vert}  \vert t \vert t > \dfrac{f(x ,t)}{2\vert t \vert} \vert t \vert t.
$$

Finally, since $F(x,0)=0$, it follows from \ref{f2} that
\begin{equation*}
\lim_{s\rightarrow0}\left(\frac12sf(x,s)-F(x,s)\right)=0.    
\end{equation*}
The monotonicity proved above therefore gives
\begin{equation*}
\frac{1}{2}sf(x,s)-F(x,s)\geq0,    
\end{equation*}
for every $s\neq 0$.
\end{proof}
We introduce the Nehari set and the Nehari energy level
\begin{equation*}
    \mathcal{N}  = \{ u \in H^{1}_{0}(\Omega) \setminus \{0\} \; : \; \mathcal{I}'(u)u = 0 \} \quad \mbox{and} \quad c_{\mathcal{N}} \coloneqq \inf_{u\in\mathcal{N}} \mathcal{I}(u).
\end{equation*}
This level will be compared with the mountain pass level in order to identify
the ground state energy.

\begin{lemma} \label{eq:desigualdadedosniveis}
Assume that conditions \ref{f1}, \ref{f3}, and \ref{f5}-\ref{f6} hold. Then $c \leq c_{\mathcal N}.$
\end{lemma}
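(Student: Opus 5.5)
The plan is the standard fibering argument. Fix an arbitrary $u\in\mathcal N$. We will show that the ray $t\mapsto tu$ produces an admissible path in $\Gamma$ whose maximum is exactly $\mathcal I(u)$. Taking the infimum over $u\in\mathcal N$ then gives $c\le c_{\mathcal N}$.

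\textbf{Step 1: the fiber map has its unique maximum at $t=1$.} Consider
\[
h(t)\coloneqq \mathcal I(tu)=\frac{t^2}{2}\Vert u\Vert_B^2-\int_\Omega F(x,tu)\,\mathrm{d}x,\qquad t>0 .
\]
Its derivative is
\[
h'(t)=t\Big(\Vert u\Vert_B^2-\int_\Omega \frac{f(x,tu)}{tu}\,u^2\,\mathrm{d}x\Big),
\]
with the integrand read on $\{u\neq 0\}$. Write $f(x,tu)/(tu)=\bigl(f(x,tu)/\vert tu\vert\bigr)\operatorname{sgn}(u)$.
\begin{itemize}
\item Suppose $u>0$. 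By \ref{f6}, the factor $f(x,tu)/\vert tu\vert$ is strictly increasing in $t$, so $f(x,tu)/(tu)$ is strictly increasing in $t$.
\item Suppose $u<0$. Then $tu$ decreases as $t$ increases, so $f(x,tu)/\vert tu\vert$ is strictly decreasing in $t$. Multiplying by $\operatorname{sgn}(u)=-1$ again gives a strictly increasing quantity.
\end{itemize}
Since $\{u\neq0\}$ has positive measure, the map $t\mapsto \int_\Omega \frac{f(x,tu)}{tu}u^2\,\mathrm{d}x$ is strictly increasing.

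The condition $u\in\mathcal N$ says precisely that this integral equals $\Vert u\Vert_B^2$ at $t=1$. Hence $h'>0$ on $(0,1)$ and $h'<0$ on $(1,\infty)$. Therefore $\max_{t\geq0}\mathcal I(tu)=\mathcal I(u)$.

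Differentiating under the integral is legitimate. It follows from \ref{f1} together with the first assertion of Lemma~\ref{eq:TMeditadaC}, which gives $\int_\Omega\exp(\alpha u^2)<\infty$ for every $\alpha$; this is the same justification used for $\mathcal I\in C^1$.

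\textbf{Step 2: the ray is an admissible path.} By Lemma~\ref{eq:superquadratic-F-infty} (which uses \ref{f3} and \ref{f5}), exactly as in the proof of Lemma~\ref{mountainpassC}(ii), we have $\mathcal I(tu)\to-\infty$ as $t\to\infty$. The only difference is that $u$ is a general element of $H^1_0(\Omega)$ rather than a test function; the argument needs only $\Vert u\Vert_{L^\theta}>0$, which holds. So we can choose $T>1$ with $\mathcal I(Tu)<0$. The path $\tilde\gamma(s)=sTu$, $s\in[0,1]$, then lies in $\Gamma$. Therefore
\[
c\le \max_{s\in[0,1]}\mathcal I(sTu)\le\sup_{t\ge0}\mathcal I(tu)=\mathcal I(u).
\]
Since $u\in\mathcal N$ was arbitrary, $c\le c_{\mathcal N}$.

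\textbf{Main obstacle.} The main point of care is the strict monotonicity in Step 1 for sign-changing $u$. The hypothesis \ref{f6} is phrased with $\vert s\vert$, so the two sign cases must be handled separately as above to see that $f(x,s)/s$ is increasing in $\vert s\vert$ on each half-line.

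We also need $\mathcal N\neq\emptyset$, and we want the statement to be non-vacuous. This follows because the same fiber argument, combined with the small-sphere bound of Lemma~\ref{mountainpassC}(i), shows that every ray $tu$ crosses $\mathcal N$. If $\mathcal N$ were empty, then $c_{\mathcal N}=+\infty$ and the inequality would hold trivially anyway.
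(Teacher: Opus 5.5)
Your proposal is correct and follows essentially the same route as the paper. The paper also shows via \ref{f6} that the fiber map $h(t)=\mathcal I(tu)$ attains its maximum at $t=1$ for $u\in\mathcal N$, then uses the ray $t\mapsto tt_0u$ with $\mathcal I(t_0u)<0$ as a path in $\Gamma$. Your explicit two-sign-case analysis and your justification of $t_0$ via Lemma~\ref{eq:superquadratic-F-infty} spell out what the paper handles through the factor $\vert u\vert u$ and leaves implicit.
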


\begin{proof}
Let $u \in \mathcal{N}$ and define $h:[0,+\infty) \rightarrow \mathbb{R}$ by $h(t) = \mathcal{I}(tu)$. Note that
\begin{equation*}
    h'(t) = t \Vert u \Vert^{2}_{B} - \int_{\Omega} f(x,tu)u \; \mathrm{d}x.
\end{equation*}
Since $u \in \mathcal{N}$, we deduce
\begin{align*}
h'(t) 
&= t \int_{\Omega} f(x,u)u \; \mathrm{d}x
- \int_{\Omega} f(x,tu)u \; \mathrm{d}x \\ 
&= t \int_{\Omega} \left[f(x,u)u - \dfrac{f(x,tu)}{t}u \right] \; \mathrm{d}x \\
&= t \int_{\Omega} \left[
\dfrac{f(x,u)}{\vert u\vert}
-
\dfrac{f(x,tu)}{\vert tu\vert}
\right]\vert u\vert u \; \mathrm{d}x.
\end{align*}
Using assumption \ref{f6}, we conclude that $h'(t)\geq 0$ for $0<t<1$ and $h'(t)\leq 0$ for $t>1$. 
Therefore, for $u\in\mathcal{N}$, we have 
\begin{equation*}
    \mathcal{I}(u) = \max \limits_{t \geq 0} \mathcal{I}(tu)=\max \limits_{t \geq 0} h(t)=h(1).
\end{equation*}

Finally, we define $g: [0,1] \rightarrow H^{1}_{0}(\Omega)$ by $g(t) = tt_{0}u$, where $t_{0}$ is such that $\mathcal{I}(t_{0}u) < 0.$ Then, $g \in \Gamma$ and 
\begin{equation*}
    c \leq \max \limits_{t \in [0,1]} \mathcal{I}(g(t)) \leq \max \limits_{t \geq 0} \mathcal{I}(tu) = \mathcal{I}(u).
\end{equation*}
Since $u \in \mathcal{N}$ is arbitrary, it follows that $c \leq c_{\mathcal{N}}.$
\end{proof}

\begin{remark} \label{eq:remark12}
Let
\begin{equation*}
\mathcal{S} \coloneqq \left\{ u \in H_{0}^{1}(\Omega) : \mathcal{I}'(u)=0 \mbox{ and } u \neq 0 \right\}
\end{equation*}
and set $c_{\mathcal{S}} \coloneqq \inf \limits_{u \in \mathcal{S}} \mathcal{I}(u).$ By Definition \ref{def:groundstate_reduced}, $u_{0} \in S$ is a ground state solution of \eqref{nonlocalproblem} whenever
$
\mathcal{I}(u_{0}) = c_{\mathcal{S}}.
$
Since $\mathcal{S} \subset \mathcal{N}$, we have
$c_{\mathcal{N}} \leq c_{\mathcal{S}}.$ 
Moreover, by Lemma \ref{eq:desigualdadedosniveis}, $c \leq c_{\mathcal{N}}$. Hence
$
c \leq c_{\mathcal{N}} \leq c_{\mathcal{S}}.
$
Therefore, in order to obtain a ground state solution $u_{0}$ for problem \eqref{nonlocalproblem}, it is enough to show that there exists $u_{0} \in \mathcal{S}$ such that
$
\mathcal{I}(u_{0}) = c_{\mathcal{S}}.
$
\end{remark}

\subsection{Compactness below the Trudinger-Moser level}

From now on, our goal is to ensure that $\mathcal{I}$ satisfies the Palais–Smale condition at the level $c < 2 \pi/\alpha_{0}.$ 

\begin{lemma} \label{eq:limitacaoC} Suppose that \ref{f3} and \ref{f5} hold. 
Let $(u_{n})_{n \in \mathbb{N}}$ be a $(PS)_c$ sequence. Then, $(u_{n})_{n \in \mathbb{N}}$ is bounded.
\end{lemma}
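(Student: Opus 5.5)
The plan is to combine the two Palais-Smale relations through the Ambrosetti-Rabinowitz-type inequality at infinity from Lemma~\ref{eq:superquadratic-F-infty}. Fix any $\theta>2$, say $\theta=4$, and let $R_\theta$ be the corresponding radius. Since $(u_n)$ is a $(PS)_c$ sequence, for $n$ large we have
\[
\mathcal{I}(u_n)-\frac{1}{\theta}\mathcal{I}'(u_n)u_n\leq c+1+\Vert \mathcal{I}'(u_n)\Vert\,\Vert u_n\Vert_B ,
\]
and the left-hand side equals
\[
\Bigl(\frac12-\frac1\theta\Bigr)\Vert u_n\Vert_B^2+\frac1\theta\int_\Omega\bigl(f(x,u_n)u_n-\theta F(x,u_n)\bigr)\,\mathrm{d}x .
\]

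I will split the last integral into the regions $\{\vert u_n\vert\geq R_\theta\}$ and $\{\vert u_n\vert<R_\theta\}$.
\begin{itemize}
\item On the first region the integrand is nonnegative by Lemma~\ref{eq:superquadratic-F-infty}, so this part can be dropped.
\item On the second region I use $f(x,s)s>0$ from \ref{f5}, so $f(x,u_n)u_n-\theta F(x,u_n)\geq -\theta F(x,u_n)$.
\item By continuity of $F$ on the compact set $\overline{\Omega}\times[-R_\theta,R_\theta]$, we have $F(x,u_n)\leq M_\theta\coloneqq\max_{\overline\Omega\times[-R_\theta,R_\theta]}F$.
\end{itemize}
Hence the integral over the whole domain is bounded below by $-\theta M_\theta\vert\Omega\vert$, a constant independent of $n$.

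Combining these bounds gives
\[
\Bigl(\frac12-\frac1\theta\Bigr)\Vert u_n\Vert_B^2\leq c+1+M_\theta\vert\Omega\vert+o_n(1)\Vert u_n\Vert_B .
\]
Here $\Vert\mathcal{I}'(u_n)\Vert$ denotes the dual norm with respect to $\Vert\cdot\Vert_B$, which is legitimate since this norm is equivalent to the standard one. A quadratic dominating a linear term forces $\sup_n\Vert u_n\Vert_B<\infty$, and boundedness in $H_0^1(\Omega)$ then follows from the norm equivalence.

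There is no real obstacle here, because \ref{f5} is not a global AR condition with $\theta>2$. The only delicate point is that Lemma~\ref{eq:superquadratic-F-infty} gives the superquadratic inequality only for $\vert s\vert\geq R_\theta$. This is exactly what the splitting handles: on the bounded region we only need that $F$ is bounded there.
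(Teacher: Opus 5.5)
Your proposal is correct and follows essentially the same route as the paper: it estimates $\mathcal{I}(u_n)-\frac{1}{\theta}\mathcal{I}'(u_n)u_n$ using the superquadratic inequality of Lemma~\ref{eq:superquadratic-F-infty} for $\vert s\vert\geq R_\theta$, together with a uniform lower bound on the compact region $\overline{\Omega}\times[-R_\theta,R_\theta]$. The only difference is cosmetic. The paper bounds $H_\theta(x,s)=\frac{1}{\theta}sf(x,s)-F(x,s)$ from below there directly by continuity, whereas you drop $sf\geq 0$ and bound $F$ from above.
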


\begin{proof}
Let $(u_n)_{n\in\mathbb{N}}\subset H_0^1(\Omega)$ be a $(PS)_c$ sequence. Fix $\theta>2$. By Lemma \ref{eq:superquadratic-F-infty}, there exists $R_\theta>0$ such that
\begin{equation*}
\theta F(x,s)\leq sf(x,s),
\quad
x\in\overline{\Omega},\quad \vert s\vert \geq R_\theta.
\end{equation*}
Define
\begin{equation*}
H_\theta(x,s)
\coloneqq
\frac{1}{\theta}sf(x,s)-F(x,s).
\end{equation*}
It follows that
\begin{equation*}
H_\theta(x,s)\geq 0,
\quad
x\in\overline{\Omega},\quad \vert s\vert \geq R_\theta.
\end{equation*}
Moreover, since $H_\theta$ is continuous on the compact set
$\overline{\Omega}\times[-R_\theta,R_\theta]$, there exists a constant
$M_\theta>0$ such that
\begin{equation*}
H_\theta(x,s)\geq -M_\theta,
\quad
(x,s)\in\overline{\Omega}\times\mathbb{R}.
\end{equation*}

Since $(u_n)_{n\in\mathbb{N}}$ is a $(PS)_c$ sequence, there exist \(C>0\) and a sequence \((\varepsilon_n)_{n\in\mathbb{N}}\) with
\(\varepsilon_n\to0\) such that
\begin{equation*}
\vert \mathcal{I}(u_n)\vert \leq C
\quad\text{and}\quad
 \vert \mathcal{I}'(u_n)u_n \vert 
\leq \varepsilon_n\Vert u_n\Vert_B.
\end{equation*}
Consequently,
\begin{align*}
C+\frac{\varepsilon_n}{\theta}\Vert u_n\Vert_B
&\geq
\mathcal{I}(u_n)-\frac{1}{\theta} \mathcal{I}'(u_n)u_n\\
&=
\left(\frac{1}{2}-\frac{1}{\theta}\right)\Vert u_n\Vert_B^2
+
\int_\Omega
\left[
\frac{1}{\theta}f(x,u_n)u_n-F(x,u_n)
\right]\mathrm{d}x\\
&=
\left(\frac{1}{2}-\frac{1}{\theta}\right)\Vert u_n\Vert_B^2
+
\int_\Omega H_\theta(x,u_n)\,\mathrm{d}x\\
&\geq
\left(\frac{1}{2}-\frac{1}{\theta}\right)\Vert u_n\Vert_B^2
-M_\theta\vert \Omega\vert .
\end{align*}
Thus,
\begin{equation*}
\left(\frac{1}{2}-\frac{1}{\theta}\right)\Vert u_n\Vert_B^2
\leq
C+M_\theta\vert \Omega\vert 
+\frac{\varepsilon_n}{\theta}\Vert u_n\Vert_B.
\end{equation*}
Since $\theta>2$, we have
\begin{equation*}
\left( \frac{1}{2}-\frac{1}{\theta} \right) >0.
\end{equation*}
Therefore, the preceding inequality implies that
$(u_n)_{n\in\mathbb{N}}$ is bounded in $H_0^1(\Omega)$.
\end{proof}
    
Let $(u_{n})_{n \in \mathbb{N}}$ be a $(PS)_c$ sequence. In view of Lemma \ref{eq:limitacaoC}, up to a subsequence, we have that
\begin{equation} \label{eq:convergenciaweakly2205}
\left\{
\begin{array}{l}
u_n \rightharpoonup u_0, \ \text{in} \ H_0^1(\Omega), \\[6pt]
u_n(x) \rightarrow u_0(x), \ \text{a.e. in } \Omega, \\[6pt]
u_n \rightarrow u_0, \ \text{in} \ L^{p}(\Omega), \ 1 \leq p < +\infty.
\end{array}
\right.
\end{equation}
The next lemmas show that the weak limit \(u_0\) is a nontrivial weak solution of \eqref{nonlocalproblem}. For this, we recall the following result, which can be found in \cite[Lemma~2.1]{de2014elliptic}:

\begin{lemma} \label{eq:lemaRUFDIJC}
Suppose $\Omega$ is a bounded domain in $\mathbb{R}^2$. 
Let $(u_{n})_{n \in \mathbb{N}}$ be a sequence in $L^1(\Omega)$ such that $u_n \rightarrow u$ in $L^1(\Omega)$ 
and let $f(x,s)$ be a continuous function. 
Then, $f(x,u_n) \rightarrow f(x,u)$ in $L^1(\Omega)$ provided that 
$f(x,u_n) \in L^1(\Omega)$ for all $n \in \mathbb{N}$ and 
\begin{equation*}
\int_{\Omega} \vert f(x,u_n)u_n \vert\,\mathrm{d}x \le C.
\end{equation*}
\end{lemma}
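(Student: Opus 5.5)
The plan is to prove this as a Vitali-type convergence argument: first get convergence in measure (almost everywhere along subsequences), then get uniform integrability of $f(x,u_n)$ from the bound on $\int |f(x,u_n)u_n|$. Since $u_n\to u$ in $L^1(\Omega)$, every subsequence has a further subsequence converging a.e. Continuity of $f$ then gives $f(x,u_n)\to f(x,u)$ a.e. along it. By the standard subsequence argument, it suffices to prove convergence in $L^1$ along such a subsequence. So assume from now on that $u_n\to u$ a.e.

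\textbf{Step 1: splitting.} Fix $M>0$ and split $\Omega$ into $\{|u_n|< M\}$ and $\{|u_n|\ge M\}$. On the large set,
\[
\int_{\{|u_n|\ge M\}}|f(x,u_n)|\,\mathrm{d}x\le \frac1M\int_\Omega|f(x,u_n)u_n|\,\mathrm{d}x\le \frac CM .
\]
This bound is uniform in $n$.

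\textbf{Step 2: the limit function.} By Fatou's lemma, $\int_\Omega|f(x,u)u|\,\mathrm{d}x\le C$. Combined with $|f(x,u)|\le \max_{\overline\Omega\times[-M,M]}|f|$ on $\{|u|<M\}$, this shows $f(x,u)\in L^1(\Omega)$. It also gives the same tail bound $\int_{\{|u|\ge M\}}|f(x,u)|\le C/M$.

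\textbf{Step 3: the bounded part.} Write
\[
\int_\Omega|f(x,u_n)-f(x,u)|\le \int_{\{|u_n|<M\}}|f(x,u_n)-f(x,u)|+\frac CM+\int_{\{|u_n|\ge M\}}|f(x,u)|.
\]
On $\{|u_n|<M\}$, the difference is bounded by $\max_{\overline\Omega\times[-M,M]}|f|+|f(x,u)|$. This dominating function is integrable because $\Omega$ is bounded and $f(x,u)\in L^1$. The integrand tends to $0$ a.e., so dominated convergence sends the first term to $0$.

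For the last term, split it over $\{|u|\ge M/2\}$ and $\{|u|<M/2,\ |u_n|\ge M\}$. The first piece is at most $2C/M$ by the Fatou bound from Step 2. On the second piece, $|u_n-u|\ge M/2$, so Chebyshev gives measure at most $2\|u_n-u\|_{L^1}/M\to0$. Absolute continuity of the integral of $f(x,u)$ then makes this piece go to $0$.

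\textbf{Step 4: conclusion.} Combining the steps, $\limsup_n\|f(x,u_n)-f(x,u)\|_{L^1}\le 3C/M$. Letting $M\to\infty$ finishes the proof.

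The main subtlety is the term over $\{|u_n|\ge M\}$ involving $f(x,u)$, which Step 3 handles. The rest is routine; no growth assumption on $f$ beyond continuity is needed, because the uniform bound on $\int|f(x,u_n)u_n|$ supplies the equi-integrability.
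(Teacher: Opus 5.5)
Your proof is correct and complete. The tail bound $\int_{\{|u_n|\ge M\}}|f(x,u_n)|\,\mathrm{d}x\le C/M$, Fatou's lemma for $f(x,u)u$, dominated convergence on $\{|u_n|<M\}$, and the inclusion $\{|u|<M/2,\ |u_n|\ge M\}\subset\{|u_n-u|\ge M/2\}$ together give $\limsup_n\|f(x,u_n)-f(x,u)\|_{L^1(\Omega)}\le 3C/M$. The subsequence principle then gives convergence of the whole sequence.

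The paper itself does not prove this lemma; it simply quotes it from \cite[Lemma~2.1]{de2014elliptic}. So your argument is not an alternative route but a self-contained replacement for that citation, and writing it out has some payoff.

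First, it shows that two stated hypotheses are superfluous. The condition $f(x,u_n)\in L^1(\Omega)$ follows from your Step~1 together with the bound $\max_{\overline\Omega\times[-M,M]}|f|$ on $\{|u_n|<M\}$. Nothing about $\mathbb{R}^2$ is used beyond $|\Omega|<+\infty$.

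Second, it pins down the one place where the regularity of $f$ matters: the finiteness of $\max_{\overline\Omega\times[-M,M]}|f|$. So ``continuous'' must be read as continuous on $\overline\Omega\times\mathbb{R}$, which is the paper's standing assumption $f\in C(\overline\Omega\times\mathbb{R},\mathbb{R})$. That reading is genuinely necessary, as the following example shows.
\begin{itemize}
\item[(a)] Take $f(x,s)=\operatorname{dist}(x,\partial\Omega)^{-2}\phi(s)$, with $\phi$ continuous, $\phi(0)=0$ and $\phi(1)=1$.
\item[(b)] Let $u_n$ be the indicator of a ball of radius $r_n\to0$ whose centre lies at distance $2r_n$ from $\partial\Omega$.
\item[(c)] Then $f$ is continuous on $\Omega\times\mathbb{R}$, $u_n\to0$ in $L^1(\Omega)$, and $\int_\Omega|f(x,u_n)u_n|\,\mathrm{d}x\le\pi$.
\item[(d)] Yet $\|f(x,u_n)-f(x,0)\|_{L^1(\Omega)}\ge\pi/9$ for every $n$.
\end{itemize}
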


\begin{lemma} \label{eq:limitesu0C} Assume that \ref{f1}, \ref{f3} and \ref{f5} are satisfied, and let $(u_n)_{n \in \mathbb{N}}$ be a $(PS)_c$ sequence. Then the term
$
    \displaystyle \int_{\Omega} f(x,u_{n}) u_{n} \, \mathrm{d}x
$
is bounded. Moreover,
\begin{itemize}
    \item[(i)] $f(x,u_n)\rightarrow f(x,u_0)$ in $L^1(\Omega)$, as $n\rightarrow+\infty$; \vspace{0.1cm}
    \item[(ii)]  $\displaystyle \int_\Omega F(x,u_{n}) \; \mathrm{d}x \rightarrow \displaystyle \int_\Omega F(x,u_{0}) \; \mathrm{d}x,$ as $n \rightarrow + \infty.$
\end{itemize}
\end{lemma}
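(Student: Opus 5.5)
The plan is to first bound $\int_\Omega f(x,u_n)u_n\,\mathrm{d}x$ uniformly, and then read off (i) and (ii) from Lemma~\ref{eq:lemaRUFDIJC} together with the pointwise control coming from \ref{f3}.

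\textbf{Step 1: the uniform bound.} By Lemma~\ref{eq:limitacaoC} there is $M>0$ with $\Vert u_n\Vert_B\le M$ for all $n$. Since $\mathcal{I}'(u_n)\to0$, we get
\[
\Big|\Vert u_n\Vert_B^2-\int_\Omega f(x,u_n)u_n\,\mathrm{d}x\Big|=|\mathcal{I}'(u_n)u_n|\le \Vert\mathcal{I}'(u_n)\Vert\,\Vert u_n\Vert_B\le M\,o_n(1).
\]
This gives $\int_\Omega f(x,u_n)u_n\,\mathrm{d}x\le M^2+o_n(1)$, so the integrals are bounded. By \ref{f5}, $sf(x,s)\ge0$, hence $\int_\Omega|f(x,u_n)u_n|\,\mathrm{d}x=\int_\Omega f(x,u_n)u_n\,\mathrm{d}x\le C$.

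\textbf{Step 2: item (i).} Each $f(x,u_n)$ lies in $L^1(\Omega)$. On $\{|u_n|\le1\}$, $|f(x,u_n)|$ is bounded by continuity of $f$ on the compact set $\overline\Omega\times[-1,1]$. On $\{|u_n|>1\}$, $|f(x,u_n)|\le|f(x,u_n)u_n|$, which is integrable by Step~1. (Alternatively, \ref{f1} and the first assertion of Lemma~\ref{eq:TMeditadaC} give integrability directly.) From \eqref{eq:convergenciaweakly2205} we have $u_n\to u_0$ in $L^1(\Omega)$. Lemma~\ref{eq:lemaRUFDIJC} therefore applies and yields $f(x,u_n)\to f(x,u_0)$ in $L^1(\Omega)$.

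\textbf{Step 3: item (ii).} I will use a generalized dominated convergence argument. Passing to a subsequence, $u_n\to u_0$ a.e., so $F(x,u_n)\to F(x,u_0)$ a.e. by continuity of $F$. Let $s_0,K_0$ be as in \ref{f3} and set $A:=\max_{\overline\Omega\times[-s_0,s_0]}|F|$. Then
\[
|F(x,u_n)|\le A+K_0|f(x,u_n)|\quad\text{a.e. in }\Omega.
\]
By Step~2, the right-hand side converges in $L^1(\Omega)$ to $A+K_0|f(x,u_0)|$. The generalized Lebesgue dominated convergence theorem (Vitali) then gives $\int_\Omega F(x,u_n)\,\mathrm{d}x\to\int_\Omega F(x,u_0)\,\mathrm{d}x$ along the subsequence.

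The limit does not depend on the subsequence chosen, so the whole sequence converges. The only point needing care is checking the hypotheses of Lemma~\ref{eq:lemaRUFDIJC}, which Step~1 provides. Step~3 is where \ref{f3} is used, since it turns $L^1$ control of $f(x,u_n)$ into domination of $F(x,u_n)$.
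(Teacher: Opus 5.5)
Your proof is correct and follows the paper's argument essentially step for step: the $(PS)$ identity together with \ref{f5} gives the bound, Lemma~\ref{eq:lemaRUFDIJC} gives (i), and the domination $|F(x,u_n)|\le C_0+K_0|f(x,u_n)|$ with generalized dominated convergence gives (ii). The only deviation is that you check $f(x,u_n)\in L^1(\Omega)$ by splitting $\Omega$ into $\{|u_n|\le 1\}$ and $\{|u_n|>1\}$ instead of using \ref{f1} and the Trudinger--Moser route the paper takes, which is equally valid.
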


\begin{proof} 
Since $(u_{n})_{n \in \mathbb{N}}$ is a $(PS)_c$ sequence, it follows from Lemma \ref{eq:limitacaoC} that $(u_{n})_{n \in \mathbb{N}}$ is bounded in $H^{1}_{0}(\Omega)$. Moreover, by \eqref{eq:convergenciaweakly2205}, we have that $(u_{n})_{n \in \mathbb{N}}\subset L^{1}(\Omega)$ and
$u_{n} \rightarrow u_{0}$, strongly in $L^{1}(\Omega)$. Thus, the first hypothesis of Lemma \ref{eq:lemaRUFDIJC} is satisfied. On the other hand,
\begin{equation*}
   o_{n}(1) = \mathcal{I}'(u_{n})u_{n} = \Vert u_{n} \Vert^{2}_{B} - \int_{\Omega} f(x,u_{n})u_{n} \; \mathrm{d}x.
\end{equation*}

\noindent Since $(u_{n})_{n \in \mathbb{N}}$ is bounded in $H^{1}_{0}(\Omega)$ and the norm $\Vert \cdot \Vert_{B}$ is equivalent to the usual norm of $H^{1}_{0}(\Omega)$, it follows that
\begin{equation*}
    \left \vert \int_{\Omega} f(x,u_{n})u_{n} \; \mathrm{d}x \right \vert  =  \left \vert \Vert u_{n} \Vert^{2}_{B} - o_{n}(1) \right \vert \leq C.
\end{equation*}
Moreover, by \ref{f5}, we have $f(x,s)s>0$ for every $s \neq 0$. Hence,
\begin{equation*}
    \int_{\Omega} \left \vert f(x,u_{n})u_{n} \right \vert \; \mathrm{d}x
    =
    \int_{\Omega} f(x,u_{n})u_{n} \; \mathrm{d}x
    \leq C.
\end{equation*}

\noindent It remains to check that $f(x,u_{n}) \in L^{1}(\Omega)$, for every $n \in \mathbb{N}$. Indeed, by \ref{f1}, for every $\alpha>\alpha_{0}$ there exists $C_{\alpha}>0$ such that
\begin{equation*}
    \vert f(x,s)\vert  \leq C_{\alpha}\exp(\alpha s^{2}),
    \quad \text{for all } (x,s) \in \Omega \times \mathbb{R}.
\end{equation*}
Since $u_{n} \in H^{1}_{0}(\Omega)$, the first assertion of the Trudinger-Moser inequality (Lemma \ref{eq:TMeditadaC}), applied after a normalization of $u_n$, implies that
\begin{equation*}
    \int_{\Omega} \vert f(x,u_{n}) \vert \; \mathrm{d}x
    \leq
    C_{\alpha}\int_{\Omega}\exp(\alpha u_{n}^{2}) \; \mathrm{d}x
    <+\infty.
\end{equation*}
Thus, $f(x,u_{n}) \in L^{1}(\Omega)$, for every $n \in \mathbb{N}$. Hence, all hypotheses of Lemma \ref{eq:lemaRUFDIJC} are satisfied. Consequently,
$f(x,u_{n}) \rightarrow f(x,u_{0})$, strongly in $L^{1}(\Omega)$, as $n \rightarrow +\infty$. This proves item $(i)$.

We now prove $(ii).$ In view of assumptions \ref{f3} and \ref{f5}, one may deduce
\begin{equation*}
\vert F(x, u_{n})\vert \leq C_{0} + K_{0} \vert f(x, u_{n})\vert.
\end{equation*}
Define
\begin{equation*}
g_n(x)\coloneqq C_0+K_0\vert f(x,u_n(x))\vert \quad \mbox{and} \quad 
g(x)\coloneqq C_0+K_0\vert f(x,u_0(x))\vert.
\end{equation*}
In view of $(i)$ we deduce
\begin{align*}
\Vert g_n-g\Vert_{L^1(\Omega)}
&=
K_0\int_{\Omega}
\big\vert
\vert f(x,u_n)\vert-\vert f(x,u_0)\vert
\big\vert\,\mathrm{d}x \nonumber\\
&\leq
K_0\int_{\Omega}
\vert f(x,u_n)-f(x,u_0)\vert\,\mathrm{d}x \nonumber\\
&=
K_0\Vert f(x,u_n)-f(x,u_0)\Vert_{L^1(\Omega)}\rightarrow0, \quad \mbox{as } n\rightarrow + \infty,
\end{align*}
i.e., $g_n\rightarrow g$ in $L^1(\Omega)$. Thus, since $u_n(x) \rightarrow u_{0}(x)$ a.e. in $\Omega$, it follows from the Generalized Dominated Convergence Theorem that
\[
\lim_{n \rightarrow +\infty} \int_{\Omega} F(x, u_n) \mathrm{d}x 
= \int_{\Omega} F(x, u_0) \; \mathrm{d}x,
\]
which finishes the proof. 
\end{proof}

\begin{lemma} \label{eq:lemadiferentezero}
Suppose that \ref{f1}-\ref{f5} hold. Then, the weak limit $u_{0}$ is nontrivial.
\end{lemma}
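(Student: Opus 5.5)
We argue by contradiction and suppose that $u_0\equiv 0$. The aim is to show that the $(PS)_c$ sequence then converges strongly to $0$. That forces $c=\lim \mathcal I(u_n)=0$, which contradicts $c\geq\varepsilon>0$ from Lemma \ref{mountainpassC}(i); note that every path in $\Gamma$ must cross the sphere $\Vert u\Vert_B=r_0$.

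\textbf{Step 1.} Assuming $u_0=0$, Lemma \ref{eq:limitesu0C}(ii) gives $\int_\Omega F(x,u_n)\,\mathrm{d}x\to\int_\Omega F(x,0)\,\mathrm{d}x=0$. Since $\mathcal I(u_n)\to c$, this yields
\begin{equation*}
\Vert u_n\Vert_B^2 = 2c+o_n(1).
\end{equation*}
By Lemma \ref{eq:lemadonivelc} we have $c<2\pi/\alpha_0$. Hence there exist $\eta>0$ and $n_0$ such that $\Vert u_n\Vert_B^2\leq \frac{4\pi}{\alpha_0}(1-\eta)$ for all $n\ge n_0$.

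\textbf{Step 2.} Choose $\alpha>\alpha_0$ close to $\alpha_0$ and $q>1$ close to $1$ so that
\begin{equation*}
q\alpha\Vert u_n\Vert_B^2<4\pi \quad\text{for } n\ge n_0 .
\end{equation*}
By \ref{f1} there is $C_\alpha$ with $\vert f(x,s)\vert\le C_\alpha \exp(\alpha s^2)$. Writing $u_n=\Vert u_n\Vert_B\,(u_n/\Vert u_n\Vert_B)$ and applying Lemma \ref{eq:TMeditadaC} with exponent $q\alpha\Vert u_n\Vert_B^2\le 4\pi$, we get
\begin{equation*}
\sup_{n\ge n_0}\int_\Omega \vert f(x,u_n)\vert^q\,\mathrm{d}x
\le C_\alpha^q\sup_{n\ge n_0}\int_\Omega \exp(q\alpha u_n^2)\,\mathrm{d}x<+\infty .
\end{equation*}
If $\Vert u_n\Vert_B\to0$, the conclusion is already immediate, so we may assume the normalization is legitimate. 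Hölder's inequality with conjugate exponent $q'$ then gives
\begin{equation*}
\left\vert\int_\Omega f(x,u_n)u_n\,\mathrm{d}x\right\vert\le \Vert f(x,u_n)\Vert_{L^q}\,\Vert u_n\Vert_{L^{q'}}\to 0 ,
\end{equation*}
because $u_n\to 0$ in $L^{q'}(\Omega)$ by \eqref{eq:convergenciaweakly2205}.

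\textbf{Step 3.} From $\mathcal I'(u_n)u_n=o_n(1)$ we obtain $\Vert u_n\Vert_B^2=\int_\Omega f(x,u_n)u_n\,\mathrm{d}x+o_n(1)\to0$. Therefore $c=\lim\mathcal I(u_n)=\lim\bigl(\tfrac12\Vert u_n\Vert_B^2-\int_\Omega F(x,u_n)\,\mathrm{d}x\bigr)=0$, which contradicts $c\ge\varepsilon>0$.

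The main obstacle is Step 2. It uses the strict inequality $c<2\pi/\alpha_0$ together with the sharp constant $4\pi$ in the $B$-norm, and it depends on choosing $\alpha,q$ uniformly in $n$. Note that \ref{f4} enters only through Lemma \ref{eq:lemadonivelc}. Everything else is routine bookkeeping.
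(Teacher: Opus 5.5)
Your proof is correct and follows essentially the same route as the paper. Assuming $u_0=0$, Lemma~\ref{eq:limitesu0C}(ii) gives $\Vert u_n\Vert_B^2\to 2c<4\pi/\alpha_0$; the $B$-norm Trudinger--Moser inequality applied to $u_n/\Vert u_n\Vert_B$ with exponent $q\alpha\Vert u_n\Vert_B^2<4\pi$ then forces $\int_\Omega f(x,u_n)u_n\,\mathrm{d}x\to 0$, so $\Vert u_n\Vert_B\to 0$ and $c=0$, a contradiction.

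The differences are minor. You use the global bound $\vert f(x,s)\vert\le C_\alpha\exp(\alpha s^2)$ and estimate by $\Vert f(x,u_n)\Vert_{L^q}\Vert u_n\Vert_{L^{q'}}$, so \ref{f2} is not needed at this step. The paper instead uses the split $\vert f(x,s)s\vert\le C_1s^2+C_2\vert s\vert\exp(\alpha s^2)$. You also justify $c\ge\varepsilon>0$ explicitly, which the paper takes for granted.
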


\begin{proof}
Assume, by contradiction, that $u_{0} = 0.$ In view of Lemma \ref{eq:limitesu0C} $(ii)$, we have
\begin{equation} \label{eq:Fu0C}
    \int_{\Omega} F(x,u_{n}) \, \mathrm{d}x \rightarrow \int_{\Omega} F(x,u_{0}) \, \mathrm{d}x = 0.
\end{equation}
Since $\mathcal{I}(u_{n}) \rightarrow c$, it follows from Lemma \ref{eq:lemadonivelc} and \eqref{eq:Fu0C} that
\[
\dfrac{1}{2} \Vert u_{n} \Vert^{2}_{B} \rightarrow c <  \dfrac{2 \pi}{\alpha_{0}}, \quad \text{as } n \rightarrow + \infty.
\]
Thus, there exist $n_0 \in \mathbb{N}$ and $\beta > 0$ such that $\alpha_0 \Vert u_n \Vert^{2}_{B} < \beta < 4\pi$ for all $n \ge n_0$. Choose $q > 1$ close to $1$ and $\alpha > \alpha_0$ close to $\alpha_0$ so that $q\alpha \Vert u_n \Vert^{2}_{B} < \beta < 4\pi$ still holds, for $n$ sufficiently large. In view of assumptions \ref{f1} and \ref{f2}, there exist $C_1,C_2>0$ such that
\begin{equation} \label{eq:TH3C}
    \vert f(x,s) s\vert \leq C_{1} \vert s \vert^{2} + C_{2} \vert s \vert \exp(\alpha s^{2}),
\end{equation}
for all $s \in \mathbb{R}.$
Hence, by \eqref{eq:TH3C}, the Trudinger–Moser inequality (Lemma \ref{eq:TMeditadaC}), and Hölder's inequality, we have
\begin{align} \label{eq:argumentoanalogoC}
\left\vert \int_{\Omega} f(x, u_n) u_n \, \mathrm{d}x \right\vert
\nonumber    &\leq C_1 \int_{\Omega} \vert u_n \vert^2 \; \mathrm{d}x + C_2 \int_{\Omega} \vert u_n \vert \exp(\alpha u_n^2) \; \mathrm{d}x \nonumber\\
\nonumber    &\leq C_1 \Vert u_n \Vert_{L^{2}(\Omega)}^{2} + C_2 \Vert u_n \Vert_{L^{\frac{q}{q-1}}(\Omega)} \left( \int_{\Omega} \exp \left[ q\alpha \Vert u_n \Vert_{B}^2 (u_n / \Vert u_n \Vert_{B})^2 \right] \; \mathrm{d}x \right)^{\frac{1}{q}} \nonumber\\
    &\leq C_1 \Vert u_n \Vert_{L^{2}(\Omega)}^{2} + C_3 \vert \Omega \vert^{1/q} \Vert u_n \Vert_{L^{\frac{q}{q-1}}(\Omega)} \rightarrow 0, \quad \text{as } n \rightarrow + \infty.
\end{align}
Hence, it follows from \eqref{eq:argumentoanalogoC} that
\begin{equation*}
o_{n}(1) = \mathcal{I}'(u_{n})u_{n} = \Vert u_{n} \Vert_{B}^{2} + o_{n}(1).
\end{equation*}
Thus, we conclude that
\[
0 < c = \lim_{n \rightarrow + \infty} \mathcal{I}(u_{n}) = \dfrac{1}{2}\lim_{n \rightarrow + \infty} \Vert u_{n} \Vert^{2}_{B} = 0,
\]
which is not possible. Therefore, $u_{0} \neq 0.$
\end{proof}
\begin{lemma} \label{eq:lema050526} Suppose that \ref{f1}-\ref{f6} hold. Then, the weak limit $u_{0}$ satisfies
\begin{equation*}
\Vert u_{0} \Vert^{2}_{B} \geq \int_{\Omega} f(x,u_{0})u_{0} \; \mathrm{d}x.
\end{equation*}
\end{lemma}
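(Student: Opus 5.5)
We argue by contradiction, combining the Nehari comparison $c\le c_{\mathcal N}$ of Lemma~\ref{eq:desigualdadedosniveis} with the monotonicity of $s\mapsto \frac12 sf(x,s)-F(x,s)$ from Lemma~\ref{eq:lemadecrescenteTHC} and Fatou's lemma. Suppose that
\[
\Vert u_0\Vert_B^2<\int_\Omega f(x,u_0)u_0\,\mathrm{d}x .
\]
By Lemma~\ref{eq:lemadiferentezero}, $u_0\neq 0$. We will produce $t_0\in(0,1)$ with $t_0u_0\in\mathcal N$.

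\textbf{Step 1: finding $t_0$.} For $t\in(0,1]$ set
\[
\psi(t)\coloneqq \Vert u_0\Vert_B^2-\frac{1}{t^2}\int_\Omega f(x,tu_0)\,tu_0\,\mathrm{d}x
=\Vert u_0\Vert_B^2-\int_\Omega \frac{f(x,tu_0)}{\vert tu_0\vert}\,\vert u_0\vert u_0\,\mathrm{d}x .
\]
The integrand is nondecreasing in $t$, by~\ref{f6}. Where $u_0>0$, $tu_0$ increases with $t$. Where $u_0<0$, $tu_0$ decreases, so $f(x,tu_0)/\vert tu_0\vert$ decreases; multiplied by the negative factor $\vert u_0\vert u_0$, the product increases. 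Hence $\psi$ is nonincreasing.

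Continuity of $\psi$ on $(0,1]$ follows from dominated convergence. By~\ref{f1} and~\ref{f2} we have $\vert f(x,s)s\vert\le C_1s^2+C_2\vert s\vert\exp(\alpha s^2)$. For $t\le1$ this is dominated by $C_1u_0^2+C_2\vert u_0\vert\exp(\alpha u_0^2)$, which is integrable by the first assertion of Lemma~\ref{eq:TMeditadaC} and H\"older's inequality.

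As $t\to0^+$, assumption~\ref{f2} gives $f(x,tu_0)/\vert tu_0\vert\to 0$ pointwise. The same domination, now with $\vert u_0\vert^2$ and $\vert u_0\vert\exp(\alpha u_0^2)$ after dividing by $t^2$ and using $\vert f(x,s)\vert\le \varepsilon\vert s\vert + C_\varepsilon \vert s\vert \exp(\alpha s^2)$ for $\vert s\vert\le\vert u_0\vert$, yields $\psi(t)\to\Vert u_0\Vert_B^2>0$. Since $\psi(1)<0$ by hypothesis, there exists $t_0\in(0,1)$ with $\psi(t_0)=0$, i.e.\ $\mathcal I'(t_0u_0)(t_0u_0)=0$. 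Thus $t_0u_0\in\mathcal N$.

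\textbf{Step 2: energy comparison.} By Lemma~\ref{eq:desigualdadedosniveis} and the Nehari identity,
\[
c\le c_{\mathcal N}\le \mathcal I(t_0u_0)=\mathcal I(t_0u_0)-\tfrac12\mathcal I'(t_0u_0)(t_0u_0)=\int_\Omega\Big[\tfrac12 f(x,t_0u_0)t_0u_0-F(x,t_0u_0)\Big]\mathrm{d}x .
\]
By Lemma~\ref{eq:lemadecrescenteTHC}, the function $H(x,s)=\frac12sf(x,s)-F(x,s)$ is nonnegative and strictly increasing in $\vert s\vert$ on each half-line. Since $t_0<1$ and $\{u_0\neq0\}$ has positive measure, this gives
\[
\int_\Omega H(x,t_0u_0)\,\mathrm{d}x<\int_\Omega H(x,u_0)\,\mathrm{d}x .
\]
Next, $u_n\to u_0$ a.e.\ and $H\ge0$ is continuous, so Fatou's lemma yields
\[
\int_\Omega H(x,u_0)\,\mathrm{d}x\le\liminf_{n\to+\infty}\int_\Omega H(x,u_n)\,\mathrm{d}x=\liminf_{n\to+\infty}\Big[\mathcal I(u_n)-\tfrac12\mathcal I'(u_n)u_n\Big]=c .
\]
The last equality uses that $(u_n)_{n\in\mathbb N}$ is bounded (Lemma~\ref{eq:limitacaoC}), so $\mathcal I'(u_n)u_n\to0$. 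Chaining the inequalities gives $c<c$, a contradiction, which proves the lemma.

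The main obstacle is Step 1: justifying continuity of $\psi$ and its limit as $t\to0^+$ under only critical exponential growth. The plan handles this through the first (non-uniform) assertion of Lemma~\ref{eq:TMeditadaC}, which makes $\exp(\alpha u_0^2)$ integrable for the fixed function $u_0$. The strict inequality in Step 2 requires only the strict monotonicity in~\ref{f6}.
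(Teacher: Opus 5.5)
Your proof is correct and follows essentially the same route as the paper. You produce $t_0\in(0,1)$ with $t_0u_0\in\mathcal N$ via the intermediate value theorem, then chain $c\le c_{\mathcal N}\le\mathcal I(t_0u_0)=\int_\Omega H(x,t_0u_0)\,\mathrm{d}x<\int_\Omega H(x,u_0)\,\mathrm{d}x\le\liminf_{n}\int_\Omega H(x,u_n)\,\mathrm{d}x=c$ using Lemma~\ref{eq:desigualdadedosniveis}, Lemma~\ref{eq:lemadecrescenteTHC} and Fatou's lemma. Your dominated-convergence justification of the limit as $t\to0^+$, which the paper simply asserts from \ref{f2}, is a useful addition, though one detail is misstated: the dominating function coming from your bound is $\varepsilon u_0^2+C_\varepsilon u_0^2\exp(\alpha u_0^2)$, not one involving $\vert u_0\vert\exp(\alpha u_0^2)$. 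The monotonicity of $\psi$ is not actually needed.
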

\begin{proof}
Suppose, by contradiction, that
\begin{equation*}
    \Vert u_{0} \Vert^{2}_{B} <\int_{\Omega} f(x,u_{0}) u_{0} \; \mathrm{d}x.
\end{equation*}
Consider the function $g: [0,1] \rightarrow \mathbb{R}$ defined by $g(t) = \mathcal{I}'(tu_{0})u_{0}.$ Note that,
\begin{equation*}
    \mathcal{I}'(tu_{0})u_{0}  =  t \Vert u_{0} \Vert^{2}_{B} - \int_{\Omega} \dfrac{f(x,t u_{0})}{t u_{0}} tu_{0}^{2} \; \mathrm{d}x.
\end{equation*}
By condition \ref{f2}, we have
\begin{equation*}
    \lim \limits_{t \rightarrow 0^{+}} \dfrac{\mathcal{I}'(tu_{0})u_{0}}{t} = \Vert u_{0} \Vert^{2}_{B} > 0,
\end{equation*}
which implies that $\mathcal{I}'(tu_{0})u_{0} > 0,$ for $t$ sufficiently small. On the other hand, for $t = 1,$ we have
\begin{equation*}
    \mathcal{I}'(u_{0})u_{0} = \Vert u_{0} \Vert^{2}_{B} - \int_{\Omega} f(x,u_{0})u_{0} \; \mathrm{d}x < 0.
\end{equation*}
Thus, by the Intermediate Value Theorem, there exists $t_{0} \in (0,1)$ such that $\mathcal{I}'(t_{0} u_{0})u_{0} = 0.$ In view of Lemma \ref{eq:lemadiferentezero} we have $u_{0} \neq 0,$ then the set $\Omega_{0} = \{x \in \Omega \; : \; u_{0}(x) \neq 0 \}$ has positive measure. Furthermore, by assumption \ref{f6}, Lemma \ref{eq:lemadecrescenteTHC}, the semicontinuity of the norm and Fatou's lemma, we obtain
 \begin{align*}
     c & \leq  c_{\mathcal{N}} 
     \leq \mathcal{I}(t_{0} u_{0}) - \dfrac{1}{2} \mathcal{I}'(t_{0} u_{0})(t_{0} u_{0}) \\ & =  \int_{\Omega} \left( \frac{1}{2}  f(x , t_{0} u_{0}) t_{0} u_{0} - F(x, t_{0} u_{0}) \right)\; \mathrm{d}x \\ & <  \int_{\Omega} \left( \frac{1}{2}f(x, u_{0})u_{0}  - F(x,u_{0})\right) \; \mathrm{d}x \\ & \leq  \liminf_{n\rightarrow + \infty} \int_{\Omega} \left( \frac{1}{2}f(x, u_{n}) u_{n} - F(x,u_{n}) \right) \\ & \leq  \lim \limits_{n \rightarrow + \infty} \mathcal{I}(u_{n}) - \dfrac{1}{2} \mathcal{I}'(u_{n})u_{n} = c,
 \end{align*}
which is a contradiction.
\end{proof}

\begin{proposition}
\label{lem:compactness_critical_level}  Assume that \ref{f1}-\ref{f6} hold. Then, the functional $\mathcal{I}$ satisfies the $(PS)_c$ condition at the mountain pass level
\begin{equation*}
    c<\dfrac{2\pi}{\alpha_0}.
\end{equation*}
Consequently, equation \eqref{nonlocalproblem} admits a weak solution $u_0\in H^{1}_{0}(\Omega)$ such that $\mathcal{I}(u_0)=c$. 
\end{proposition}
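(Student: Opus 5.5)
Let $(u_n)_{n\in\mathbb{N}}$ be a $(PS)_c$ sequence with $c<2\pi/\alpha_0$, which exists by Lemma~\ref{mountainpassC} and the Mountain Pass Theorem. By Lemma~\ref{eq:limitacaoC} it is bounded, so up to a subsequence \eqref{eq:convergenciaweakly2205} holds. Lemma~\ref{eq:lemadiferentezero} gives $u_0\neq0$. The plan has three steps.

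\textbf{Step 1: $u_0$ is a critical point.} For $\varphi\in C_c^\infty(\Omega)$, pass to the limit in $\mathcal{I}'(u_n)\varphi=o_n(1)$. The term $\langle u_n,\varphi\rangle_B\to\langle u_0,\varphi\rangle_B$ by weak convergence. The term $\int f(x,u_n)\varphi\to\int f(x,u_0)\varphi$ by Lemma~\ref{eq:limitesu0C}(i), since $\varphi\in L^\infty$. Density then gives $\mathcal{I}'(u_0)=0$. In particular $\|u_0\|_B^2=\int f(x,u_0)u_0$, which is consistent with Lemma~\ref{eq:lema050526}.

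\textbf{Step 2: the energy is exactly $c$.} By Lemma~\ref{eq:limitesu0C}(ii), $\int F(x,u_n)\to\int F(x,u_0)$. Hence
\[
\tfrac12\|u_n\|_B^2\to c+\int_\Omega F(x,u_0)\,\mathrm{d}x,
\]
and weak lower semicontinuity gives $\mathcal{I}(u_0)\le c$. For the reverse inequality, note $u_0\in\mathcal{N}$, so $\mathcal{I}(u_0)\ge c_{\mathcal N}\ge c$ by Lemma~\ref{eq:desigualdadedosniveis}. Thus $\mathcal{I}(u_0)=c$. It then follows that $\|u_n\|_B^2\to 2c+2\int F(x,u_0)=2\mathcal{I}(u_0)+2\int F(x,u_0)=\|u_0\|_B^2$.

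\textbf{Step 3: strong convergence.} Weak convergence together with convergence of norms in the Hilbert space $(H_0^1(\Omega),\langle\cdot,\cdot\rangle_B)$ gives $u_n\to u_0$ strongly. This proves the $(PS)_c$ condition.

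The delicate point is whether $\int f(x,u_n)u_n\to\int f(x,u_0)u_0$ is needed at all. The argument above avoids it by combining $F$-convergence with the Nehari comparison $c\le c_{\mathcal N}$.

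If one instead wanted to argue directly, without \ref{f6}, the main obstacle would be the following. Set $v_n=u_n/\|u_n\|_B$ and suppose $\|u_n\|_B^2\to\rho^2>\|u_0\|_B^2$. Then $v_n\rightharpoonup u_0/\rho$, and Lemma~\ref{eq:lionsLemm} gives uniform exponential integrability for
\[
p<\frac{4\pi}{1-\|u_0\|_B^2/\rho^2}.
\]
One then checks that $\alpha_0\rho^2<4\pi/(1-\|u_0\|_B^2/\rho^2)$. This follows from
\[
\rho^2=2(c-\mathcal{I}(u_0))+\|u_0\|_B^2<\frac{4\pi}{\alpha_0}+\|u_0\|_B^2,
\]
which uses $\mathcal{I}(u_0)\ge0$ from Lemma~\ref{eq:lemadecrescenteTHC}. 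This bound yields $\int f(x,u_n)(u_n-u_0)\to0$ via Hölder's inequality, and hence $\|u_n\|_B\to\|u_0\|_B$, a contradiction. I would include this as a backup, but the Nehari route already suffices under \ref{f1}--\ref{f6}.
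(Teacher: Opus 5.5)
Your proof is correct, and it takes a genuinely different route from the paper at the decisive step.

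The paper does not prove $\mathcal{I}'(u_0)=0$ until the very end. It argues by contradiction, assuming $\mathcal{I}(u_0)<c$. It gets $\mathcal{I}(u_0)\ge0$ from Lemma~\ref{eq:lema050526} (a Fatou/Nehari argument giving $\mathcal{I}'(u_0)u_0\ge0$) combined with Lemma~\ref{eq:lemadecrescenteTHC}, and deduces $\rho_0^2<4\pi/\alpha_0+\Vert u_0\Vert_B^2$. It then applies the Lions-type Lemma~\ref{eq:lionsLemm} to $u_n/\Vert u_n\Vert_B$ to obtain uniform exponential integrability. This gives $\int_\Omega f(x,u_n)(u_n-u_0)\,\mathrm{d}x\to0$, hence $\Vert u_n\Vert_B\to\Vert u_0\Vert_B$, a contradiction.

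You instead first show $\mathcal{I}'(u_0)=0$, using the $L^1$ convergence of Lemma~\ref{eq:limitesu0C}(i) and density, so $u_0\in\mathcal N$. Then the sandwich $c\le c_{\mathcal N}\le\mathcal{I}(u_0)\le c$, from Lemma~\ref{eq:desigualdadedosniveis} and weak lower semicontinuity, forces energy equality and hence norm convergence. Your route has several advantages:
\begin{itemize}
\item it is shorter;
\item it avoids both Lemma~\ref{eq:lema050526} and Lemma~\ref{eq:lionsLemm};
\item it shows that at the mountain pass level the threshold $2\pi/\alpha_0$ is needed only to rule out vanishing, via Lemma~\ref{eq:lemadiferentezero};
\item it identifies $u_0$ as a ground state with $c=c_{\mathcal N}$ at once.
\end{itemize}
What the paper's heavier argument buys is portability. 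Its concentration-compactness step through Lemma~\ref{eq:lionsLemm} is reused in Proposition~\ref{prop:ps-saddle}, where the linking level has no Nehari characterization and your sandwich is unavailable. In fact, your backup argument is essentially the paper's proof of that proposition.

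One small correction to the backup. Once $\mathcal{I}'(u_0)=0$ is known, $\mathcal{I}(u_0)=\int_\Omega\bigl[\tfrac12 f(x,u_0)u_0-F(x,u_0)\bigr]\,\mathrm{d}x\ge0$ follows from \ref{f5} alone. You should cite \ref{f5} there rather than Lemma~\ref{eq:lemadecrescenteTHC}. That lemma assumes \ref{f6}, which would contradict your claim that the backup avoids \ref{f6}.
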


\begin{proof}
In view of Lemma \ref{mountainpassC}, the functional $\mathcal{I}$ satisfies the mountain pass geometry. Thus, let $(u_n)_{n \in \mathbb{N}}$ be a $(PS)_c$ sequence for $\mathcal{I}$. In view of Lemmas \ref{eq:limitacaoC} and \ref{eq:lemadiferentezero}, there exists $u_0\in H^{1}_{0}(\Omega)\backslash\{0\}$ such that, up to a subsequence, $u_n \rightharpoonup u_0$ weakly in $H^{1}_{0}(\Omega)$ and $\Vert u_n\Vert_B\rightarrow \rho_0\geq0$. Since $u_0\neq0$, we have $\rho_0>0$. Thus, using Lemma \ref{eq:limitesu0C} $(ii)$ we obtain
\begin{equation}\label{eq:cstar_identity}
    c
    =
    \lim_{n\rightarrow + \infty} \mathcal{I}(u_n)
    =
    \frac{1}{2}\rho_0^2
    -
    \int_{\Omega}F(x,u_0)\; \mathrm{d}x.
\end{equation}
By the weak lower semicontinuity of the $B$-norm we have
\begin{equation}\label{eq2}
\mathcal{I}(u_0)
=
\frac{1}{2} \Vert u_0 \Vert_B^2-\int_\Omega F(x,u_0)\,\mathrm{d}x
\leq
\frac{1}{2}\rho_0^2-\int_\Omega F(x,u_0)\,\mathrm{d}x
=
c.    
\end{equation}
Hence either $\mathcal{I}(u_0)=c$ or $\mathcal{I}(u_0)<c$.
We shall exclude the second case. Assume, by contradiction, that $\mathcal{I}(u_0)<c$. Then it follows from \eqref{eq:cstar_identity} and \eqref{eq2} that
\begin{equation*}
    \frac{1}{2}\Vert u_0\Vert^{2}_{B}
    -
    \int_{\Omega}F(x,u_0)\; \mathrm{d}x
    <
    \frac{1}{2}\rho_0^2
    -
    \int_{\Omega}F(x,u_0)\; \mathrm{d}x,
\end{equation*}
which implies that $\Vert u_0\Vert^{2}_{B}<\rho_0^2.$ Moreover, using \eqref{eq:cstar_identity}, we have
\begin{equation} \label{eq:1205261}
    \rho_0^2-\Vert u_0\Vert^{2}_{B}
    =
    2c-2\mathcal{I}(u_0).
\end{equation}
We claim that $\mathcal{I}(u_0)\geq 0$. In fact, it follows from Lemma \ref{eq:lema050526} that $\mathcal{I}'(u_{0})u_{0} \geq 0.$ Consequently, it follows from Lemma \ref{eq:lemadecrescenteTHC} that
\begin{equation} \label{eq:1205262}
    \mathcal{I}(u_{0}) \geq \mathcal{I}(u_{0}) - \dfrac{1}{2} \mathcal{I}'(u_{0})u_{0} = \int_{\Omega}  \left[\frac{1}{2}f(x,u_{0})u_{0} - F(x, u_{0}) \right] \; \mathrm{d}x \geq 0.
\end{equation}
Then, it follows from Lemma \ref{eq:lemadonivelc}, \eqref{eq:1205261} and \eqref{eq:1205262} that
\begin{equation*}
    \rho_0^2
    \leq
    2c+\Vert u_0\Vert^{2}_{B}<\frac{4\pi}{\alpha_0}
    +
    \Vert u_0\Vert^{2}_{B}.
\end{equation*} 
Thus, we obtain
\begin{equation}\label{eq:rho_tm_bound}
    \alpha_0\rho_0^2
    <
    \frac{4\pi}{1-\dfrac{\Vert u_0\Vert^{2}_{B}}{\rho_0^2}}.
\end{equation}
Define $v_n=\frac{u_n}{\Vert u_n\Vert_{B}}$. Since $u_n\rightharpoonup u_0$ in $H_0^1(\Omega)$ and $\Vert u_n\Vert_{B} \rightarrow \rho_0$, we have $v_n \rightharpoonup v_{0} \coloneqq \frac{u_0}{\rho_0}$, weakly in $H_0^1(\Omega)$. Notice also that $\Vert v_n\Vert_{B}=1$ and $\Vert v_0\Vert_{B} = \frac{\Vert u_0\Vert_{B}}{\rho_0} < 1$.
Hence
\begin{equation*}
    1-\Vert v_0\Vert^{2}_{B}
    =
    1-\frac{\Vert u_0\Vert^{2}_{B}}{\rho_0^2}
    >0.
\end{equation*}
Thus, from \eqref{eq:rho_tm_bound}, there exists $\beta>0$ such that
\begin{equation}\label{eq:beta_choice}
    \alpha_0\rho_0^2
    <
    \beta
    <
    \frac{4\pi}{1-\Vert v_0\Vert^{2}_{B}}.
\end{equation}
Since $\Vert u_n\Vert^2_{B}\rightarrow \rho_0^2$, we choose $q>1$ close to $1$ and $\alpha>\alpha_0$ close to $\alpha_0$ satisfying
\begin{equation*}
    q\alpha\Vert u_n\Vert^{2}_{B}
    <
    \beta
    <
    \frac{4\pi}{1-\Vert v_0\Vert^{2}_{B}},
\end{equation*}
for $n \in \mathbb{N}$ sufficiently large. 
Therefore, applying Lemma \ref{eq:lionsLemm} to $(v_n)_{n \in \mathbb{N}}$, we obtain
\begin{equation}\label{eq:tm_uniform}
    \sup_{n\in\mathbb{N}}
    \int_{\Omega}
    \exp(q\alpha u_n^2)\; \mathrm{d}x
    =
    \sup_{n\in\mathbb{N}}
    \int_{\Omega}
    \exp\left(q\alpha\Vert u_n\Vert^{2}_{B} v_n^2\right)\; \mathrm{d}x
    <+\infty.
\end{equation}

We claim that
\begin{equation}\label{eq:nonlinear_limit_goal}
    \int_{\Omega} f(x,u_n)(u_n-u_0)\; \mathrm{d}x
    \rightarrow 0, \quad \mbox{as } n\rightarrow +\infty.
\end{equation}
Indeed, in view of \eqref{eq:TH3C}, H\"older's inequality, and \eqref{eq:tm_uniform}, arguing as in \eqref{eq:argumentoanalogoC}, we deduce
\begin{align*}
    \left \vert
    \int_{\Omega} f(x,u_n)(u_n-u_0)\; \mathrm{d}x
    \right \vert
    \leq 
    C_3\Vert u_n-u_0\Vert_{L^{2}(\Omega)}
    +
    C_3
    \left(
    \int_{\Omega}\exp(q\alpha u_n^2)\; \mathrm{d}x
    \right)^{1/q}
    \Vert u_n-u_0\Vert_{q'},
\end{align*}
where $q'=q/(q-1)$ is the conjugate exponent of $q$. Since $u_{n}\rightarrow u_0$, strongly in $L^p(\Omega)$ for every $1\leq p<+\infty$, we conclude that \eqref{eq:nonlinear_limit_goal} holds. On the other hand, since $\mathcal{I}'(u_n)\rightarrow 0$, we have
\begin{align*}
    o_n(1)
    =&
    \mathcal{I}'(u_n)(u_n-u_0)
    \\
    =&
    \int_{\Omega}\nabla u_n \nabla (u_n-u_0)\; \mathrm{d}x
    +
    \int_{\Omega}B[u_n](u_n-u_0)\; \mathrm{d}x
    \\
    &-
    \int_{\Omega}f(x,u_n)(u_n-u_0)\; \mathrm{d}x
    \\
    = &
    \int_{\Omega}
    \left(
    \vert \nabla u_n\vert^2+B[u_n]u_n
    \right)\; \mathrm{d}x
    -
    \int_{\Omega}
    \left(
    \nabla u_n\nabla u_0+B[u_n]u_0
    \right)\; \mathrm{d}x
    \\
    &-
    \int_{\Omega}f(x,u_n)(u_n-u_0)\; \mathrm{d}x\\
    = & \Vert u_n\Vert^2_{B}
    -
    \Vert u_0\Vert^2_{B}
    + o_{n}(1)
\end{align*} 
which implies that $\Vert u_n\Vert^2_{B} \rightarrow \Vert u_0\Vert^{2}_{B}$, as $n\rightarrow + \infty$. Since $\Vert u_n\Vert^{2}_{B}\rightarrow \rho_0^2,$ we have that $\rho_0^2=\Vert u_0\Vert^{2}_{B}$, which contradicts $\Vert u_0\Vert^{2}_{B}<\rho_0^2.$ Therefore, the case $\mathcal{I}(u_0)<c$ cannot occur and $\mathcal{I}(u_0)=c$.
Thus, it follows from \eqref{eq2} that $\rho_{0}^{2}= \Vert u_{0} \Vert_B^2.$
Finally, since $u_n\rightharpoonup u_0$ in $H^1_0(\Omega)$ and $\Vert u_n \Vert_B \rightarrow \Vert u_0 \Vert_B$, we conclude that $u_n\rightarrow u_0,$ as $n \rightarrow + \infty$ strongly in $H^1_0(\Omega)$. 
Since $\mathcal{I}'(u_n)\rightarrow 0$ and $\mathcal{I}'$ is continuous, it follows that $\mathcal{I}'(u_0)=0.$
The proof is complete.
\end{proof}

\subsection{Proof of Theorem \ref{eq:mudouagoraC}}


\vspace{0.3cm}

In view of Proposition \ref{lem:compactness_critical_level}, there exists a nontrivial weak solution $u_0\in H^{1}_{0}(\Omega)$ at the level $c$. It remains only to prove that $u_0$ is a ground state solution. Since $\mathcal{I}'(u_0)=0$ and $\mathcal{I}(u_0)=c$, it follows from Remark~\ref{eq:remark12} that
\[
c\leq c_{\mathcal N}\leq c_{\mathcal{S}}\leq \mathcal{I}(u_0)=c.
\]
Therefore, $\mathcal{I}(u_0)=c_{\mathcal{S}}$, i.e., $u_0$ is a ground state solution of the reduced nonlocal problem \eqref{nonlocalproblem}. Then, in light of Proposition \ref{prop:ground_state_equivalence}, setting \(v_0=L[u_0]\), equivalently \(\delta v_0=B[u_0]\), we conclude that \((u_0,v_0)\) is a ground state solution of system \eqref{SC}.

\section{Saddle-point geometry at the origin: linking solutions} \label{sec4C}

In this section, we prove Theorem~\ref{eq:teoremadoissaddle}. Unlike the ground state result obtained in the previous section, the present argument does not rely on a Nehari-type characterization of the minimax level. Instead, we use the spectral decomposition associated with the nonlocal operator $\operatorname{T} = -\Delta+B$ to construct a linking geometry. The main point is to combine this linking structure with a critical minimax estimate below the Trudinger-Moser threshold. Since the
linking set contains a finite-dimensional component, the usual Moser functions have to be projected onto the complementary subspace. This leads to a projected Moser estimate in the $B$-norm, which is the key ingredient for proving that the linking level lies below $2\pi/\alpha_0$.
\subsection{Spectral estimates}
We begin by recalling the spectral structure of the nonlocal operator
\begin{equation*}
    \operatorname{T}=-\Delta+B.
\end{equation*}
This operator was studied by de Figueiredo and Mitidieri
\cite{Figueiredo1986Maximum} in their analysis of FitzHugh-Nagumo type
elliptic systems. In particular, if $(\lambda_j)_{j\in\mathbb N}$ denotes the
sequence of Dirichlet eigenvalues of $-\Delta$ in $\Omega$, counted with
multiplicity, and if $(\varphi_j)_{j\in\mathbb N}$ is a corresponding
$L^2(\Omega)$-orthonormal basis of eigenfunctions, then the same functions
$\varphi_j$ diagonalize the operator $\operatorname{T}=-\Delta+B$. More precisely, since
\begin{equation*}
    -\Delta \varphi_j=\lambda_j\varphi_j
    \quad\text{and}\quad
    B[\varphi_j]=\frac{\delta^2}{\lambda_j+\gamma}\varphi_j,
\end{equation*}
we have
\begin{equation*}
    \operatorname{T}(\varphi_j)
    =
    \left(\lambda_j+\frac{\delta^2}{\lambda_j+\gamma}\right)\varphi_j.
\end{equation*}
Therefore, the eigenvalues of $\operatorname{T}$ are given by
\begin{equation}\label{eq:eigenvalues_T_section4}
    \widehat{\lambda}_j
    =
    \lambda_j+\frac{\delta^2}{\lambda_j+\gamma},
    \quad j\in\mathbb N.
\end{equation}
Thus, although the operator $\operatorname{T}$ is nonlocal, its eigenspaces are generated by
the Dirichlet eigenfunctions of the Laplacian. Consequently, the finite-dimensional
space used in the linking construction may be defined only in terms of the
Laplacian eigenfunctions. For each $k\in\mathbb N$, we set
\begin{equation*}
    \mathcal{V}_k=\operatorname{span}\{\varphi_1,\ldots,\varphi_k\}.
\end{equation*}
We denote by $\mathcal{W}_k$ the orthogonal complement of $\mathcal{V}_k$ in $H^1_0(\Omega)$ with
respect to the inner product induced by the $B$-norm, namely
\begin{equation*}
    \mathcal{W}_k= \mathcal{V}_k^{\perp_B}.
\end{equation*}
Hence
\begin{equation*}
    H^1_0(\Omega)=\mathcal{V}_k\oplus \mathcal{W}_k.
\end{equation*}
In what follows, we assume that
\begin{equation*}
    \delta\leq \lambda_1+\gamma.
\end{equation*}
Under this condition, the function
\begin{equation*}
    t\mapsto t+\frac{\delta^2}{t+\gamma}
\end{equation*}
is nondecreasing for $t\geq \lambda_1$. Therefore, since
$(\lambda_j)_{j\in\mathbb N}$ is nondecreasing, the sequence
$(\widehat{\lambda}_j)_{j\in\mathbb N}$ is also nondecreasing.
\begin{lemma} 
Let $(\varphi_j)_{j\in\mathbb N}$ be the eigenfunctions of $-\Delta$ in $H_0^1(\Omega)$, chosen to be orthonormal in $L^2(\Omega)$, with corresponding eigenvalues $(\lambda_j)_{j\in\mathbb N}$. Then $(\varphi_j)_{j\in\mathbb N}$ is an orthogonal family with respect to the inner product $\langle\cdot,\cdot\rangle_B$. More precisely,
\begin{equation*}
\langle \varphi_i,\varphi_j\rangle_B
=
\hat{\lambda}_i\delta_{ij},
\quad i,j\in\mathbb{N}.
\end{equation*}
In particular,
\begin{equation*}
\Vert \varphi_i\Vert_B^2=\hat{\lambda}_i,
\quad i\in\mathbb N.
\end{equation*}
\end{lemma}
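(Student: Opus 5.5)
The plan is a direct computation that splits $\langle \varphi_i,\varphi_j\rangle_B$ into its gradient part and its nonlocal part and evaluates each one using the eigenvalue relations.

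\textbf{Gradient part.} Since $\varphi_j\in H^1_0(\Omega)$ is a weak Dirichlet eigenfunction of $-\Delta$, testing $-\Delta\varphi_j=\lambda_j\varphi_j$ against $\varphi_i$ gives
\begin{equation*}
\int_\Omega \nabla\varphi_i\nabla\varphi_j\,\mathrm{d}x=\lambda_j\int_\Omega\varphi_i\varphi_j\,\mathrm{d}x=\lambda_j\delta_{ij},
\end{equation*}
where the last equality is the $L^2(\Omega)$-orthonormality of the family.

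\textbf{Nonlocal part.} The key step is to justify $B[\varphi_j]=\frac{\delta^2}{\lambda_j+\gamma}\varphi_j$ directly from the definition $B=\delta^2(-\Delta+\gamma)^{-1}$. I would observe that $w\coloneqq\varphi_j/(\lambda_j+\gamma)\in H^1_0(\Omega)$ satisfies, weakly,
\begin{equation*}
(-\Delta+\gamma)w=\frac{\lambda_j+\gamma}{\lambda_j+\gamma}\,\varphi_j=\varphi_j.
\end{equation*}
Indeed, for every test function $\xi\in H^1_0(\Omega)$,
\begin{equation*}
\int_\Omega\nabla w\nabla\xi\,\mathrm{d}x+\gamma\int_\Omega w\xi\,\mathrm{d}x=\frac{\lambda_j+\gamma}{\lambda_j+\gamma}\int_\Omega\varphi_j\xi\,\mathrm{d}x .
\end{equation*}
By the uniqueness given by Lax--Milgram (the coercivity of $\sigma$, as recalled in the introduction), it follows that $(-\Delta+\gamma)^{-1}\varphi_j=w$. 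Hence $B[\varphi_j]=\frac{\delta^2}{\lambda_j+\gamma}\varphi_j$, and therefore
\begin{equation*}
\int_\Omega B[\varphi_i]\varphi_j\,\mathrm{d}x=\frac{\delta^2}{\lambda_i+\gamma}\,\delta_{ij}.
\end{equation*}

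\textbf{Conclusion.} Adding the two parts, and noting that the Kronecker delta makes the choice of index $i$ versus $j$ irrelevant, gives
\begin{equation*}
\langle\varphi_i,\varphi_j\rangle_B=\Bigl(\lambda_i+\frac{\delta^2}{\lambda_i+\gamma}\Bigr)\delta_{ij}=\widehat{\lambda}_i\,\delta_{ij},
\end{equation*}
by \eqref{eq:eigenvalues_T_section4}. Taking $i=j$ yields $\Vert\varphi_i\Vert_B^2=\widehat{\lambda}_i$.

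The only point needing care is the identification of $B[\varphi_j]$ through uniqueness of the resolvent problem; everything else is routine, so no real obstacle is expected.
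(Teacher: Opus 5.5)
Your proof is correct and follows essentially the same direct computation as the paper: split $\langle\varphi_i,\varphi_j\rangle_B$ into the gradient and nonlocal parts, then evaluate each with the eigenvalue relations and $L^2$-orthonormality. The only difference is that you verify $B[\varphi_j]=\frac{\delta^2}{\lambda_j+\gamma}\varphi_j$ explicitly through uniqueness for the resolvent problem, whereas the paper quotes this identity from de Figueiredo--Mitidieri.
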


\begin{proof}
Since $-\Delta\varphi_i=\lambda_i\varphi_i$, we have
\begin{equation*}
\int_{\Omega}\nabla\varphi_i\nabla\varphi_j\,\mathrm{d}x
=
\lambda_i\int_{\Omega}\varphi_i\varphi_j\,\mathrm{d}x.
\end{equation*}
Moreover,
\begin{equation*}
B[\varphi_i]
=
\frac{\delta^2}{\lambda_i+\gamma}\varphi_i.
\end{equation*}
Therefore,
\begin{align*}
\langle \varphi_i,\varphi_j\rangle_B
&=
\int_{\Omega}\nabla\varphi_i\nabla\varphi_j\,\mathrm{d}x
+
\int_{\Omega}B[\varphi_i]\varphi_j\,\mathrm{d}x\\
&=
\left(
\lambda_i+\frac{\delta^2}{\lambda_i+\gamma}
\right)
\int_{\Omega}\varphi_i\varphi_j\,\mathrm{d}x\\
&=
\hat{\lambda}_i
\int_{\Omega}\varphi_i\varphi_j\,\mathrm{d}x.
\end{align*}
Since the eigenfunctions are orthonormal in $L^2(\Omega)$, it follows that
\begin{equation*}
\int_{\Omega}\varphi_i\varphi_j\,\mathrm{d}x
=
\delta_{ij}.
\end{equation*}
Consequently,
\begin{equation*}
\langle \varphi_i,\varphi_j\rangle_B
=
\hat{\lambda}_i\delta_{ij}.
\end{equation*}
Taking $i=j$, we also obtain
\begin{equation*}
\Vert \varphi_i\Vert_B^2
=
\hat{\lambda}_i.
\end{equation*}
The proof is complete.
\end{proof}
\begin{lemma} \label{eq:spectralinequalities}
Suppose that $\delta \leq \lambda_1 + \gamma$. Then the following inequalities hold:
\begin{boldromanenum}
    \item $\Vert u \Vert_{B}^{2} \leq \hat{\lambda}_{k} \Vert u \Vert^{2}_{L^{2}(\Omega)}$, for all $u \in \mathcal{V}_{k};$
    \item $\Vert u \Vert_{B}^{2} \geq \hat{\lambda}_{k + 1} \Vert u \Vert_{L^{2}(\Omega)}^{2},$ for all $u \in \mathcal{W}_{k}.$
\end{boldromanenum}
\end{lemma}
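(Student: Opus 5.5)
The plan is to expand everything in the eigenbasis $(\varphi_j)_{j\in\mathbb N}$ and use the orthogonality relation $\langle \varphi_i,\varphi_j\rangle_B=\hat\lambda_i\delta_{ij}$ from the preceding lemma, together with the monotonicity of $(\hat\lambda_j)_{j\in\mathbb N}$, which follows from $\delta\le\lambda_1+\gamma$.

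\textbf{Item (i).} For $u=\sum_{j=1}^k a_j\varphi_j\in\mathcal V_k$, bilinearity and the orthogonality relation give
\[
\Vert u\Vert_B^2=\sum_{j=1}^k\hat\lambda_j a_j^2,
\qquad
\Vert u\Vert_{L^2(\Omega)}^2=\sum_{j=1}^k a_j^2 .
\]
Since $\hat\lambda_j\le\hat\lambda_k$ for $j\le k$, item (i) follows at once.

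\textbf{Item (ii), identifying $\mathcal W_k$.} The first step is to describe $\mathcal W_k=\mathcal V_k^{\perp_B}$ in terms of coefficients. For $u\in H^1_0(\Omega)$ and each $j$, I will check that
\[
\langle u,\varphi_j\rangle_B=\hat\lambda_j\int_\Omega u\varphi_j\,\mathrm dx .
\]
To see this, note that $\int_\Omega\nabla u\nabla\varphi_j\,\mathrm dx=\lambda_j\int_\Omega u\varphi_j\,\mathrm dx$, since $\varphi_j$ is a weak eigenfunction. Also, $B$ is self-adjoint on $L^2(\Omega)$, so $\int_\Omega B[u]\varphi_j\,\mathrm dx=\int_\Omega uB[\varphi_j]\,\mathrm dx=\frac{\delta^2}{\lambda_j+\gamma}\int_\Omega u\varphi_j\,\mathrm dx$. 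Because $\hat\lambda_j>0$, it follows that $u\in\mathcal W_k$ if and only if $u$ is $L^2$-orthogonal to $\varphi_1,\dots,\varphi_k$. Hence $u=\sum_{j>k}a_j\varphi_j$ with $a_j=\int_\Omega u\varphi_j\,\mathrm dx$, and the series converges in $L^2(\Omega)$.

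\textbf{Item (ii), the estimate.} Next I need $\Vert u\Vert_B^2=\sum_{j>k}\hat\lambda_j a_j^2$. The family $(\varphi_j/\sqrt{\hat\lambda_j})_{j\in\mathbb N}$ is $B$-orthonormal, and it is complete in $H^1_0(\Omega)$. Completeness holds because an element $B$-orthogonal to every $\varphi_j$ has all $L^2$-coefficients equal to zero, by the identity above, and therefore vanishes. So Parseval's identity in the Hilbert space $(H^1_0(\Omega),\langle\cdot,\cdot\rangle_B)$ applies, with $B$-Fourier coefficients $\langle u,\varphi_j\rangle_B/\sqrt{\hat\lambda_j}=\sqrt{\hat\lambda_j}\,a_j$. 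Then, since $\hat\lambda_j\ge\hat\lambda_{k+1}$ for all $j\ge k+1$ by monotonicity,
\[
\Vert u\Vert_B^2=\sum_{j>k}\hat\lambda_j a_j^2\ge\hat\lambda_{k+1}\sum_{j>k}a_j^2=\hat\lambda_{k+1}\Vert u\Vert_{L^2(\Omega)}^2 .
\]

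The main obstacle is this last identification: justifying the infinite expansion and the completeness argument in the $B$-inner product, and the self-adjointness step used to compute $\langle u,\varphi_j\rangle_B$. Everything else is direct.
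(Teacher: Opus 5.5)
Your proof is correct and follows the same route as the paper: expand in the eigenbasis $(\varphi_j)$, use $\Vert u\Vert_B^2=\sum_j\hat\lambda_j a_j^2$, and invoke the monotonicity of $(\hat\lambda_j)$ that $\delta\le\lambda_1+\gamma$ guarantees. The paper simply asserts the expansion $u=\sum_{j>k}a_j\varphi_j$ for $u\in\mathcal W_k$ and the resulting series identity for $\Vert u\Vert_B^2$. Your identification of $\mathcal W_k$ via $\langle u,\varphi_j\rangle_B=\hat\lambda_j\int_\Omega u\varphi_j\,\mathrm{d}x$, together with the completeness and Parseval argument in $(H^1_0(\Omega),\langle\cdot,\cdot\rangle_B)$, supplies the details the paper leaves implicit.
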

\begin{proof}
Let $u\in \mathcal{V}_k$. Then there exist $a_1,\ldots,a_k\in\mathbb R$ such that
\begin{equation*}
u=\sum_{j=1}^{k}a_j\varphi_j.
\end{equation*}
Moreover, since $(\varphi_j)_{j\in\mathbb N}$ is an $L^2(\Omega)$-orthonormal basis, we have
\begin{equation*}
\Vert u \Vert_{L^{2}(\Omega)}^{2}
=
\sum_{j=1}^{k}a_j^{2}.
\end{equation*}
As $B=\delta^2(-\Delta+\gamma)^{-1}$, it follows that
\begin{equation*}
B[\varphi_{j}]
=
\frac{\delta^2}{\lambda_j+\gamma}\varphi_j,
\quad j \in \mathbb{N}.
\end{equation*}
Since the eigenfunctions are orthonormal in $L^2(\Omega)$, we obtain
\begin{align*}
\Vert u\Vert_B^2
&=
\int_\Omega \vert \nabla u \vert ^2\, \mathrm{d}x+\int_\Omega B[u]u\,\mathrm{d}x\\
&=
\sum_{j=1}^{k}
\left(\lambda_j+\frac{\delta^2}{\lambda_j+\gamma}\right)a_j^2\\
&=
\sum_{j=1}^{k}\widehat{\lambda}_j a_j^2\\
&\leq
\widehat{\lambda}_k\sum_{j=1}^{k}a_j^2\\
&=
\widehat{\lambda}_k\Vert u\Vert_{L^2(\Omega)}^2.
\end{align*}
On the other hand, if $u\in \mathcal{W}_k$, then
\[
u=\sum_{j=k+1}^{+\infty}a_j\varphi_j.
\]
Therefore,
\begin{align*}
\Vert u\Vert_B^2
&=
\sum_{j=k+1}^{+\infty}\widehat{\lambda}_j a_j^2\\
&\geq
\widehat{\lambda}_{k+1}\sum_{j=k+1}^{+\infty}a_j^2\\
&=
\widehat{\lambda}_{k+1}\Vert u\Vert_{L^2(\Omega)}^2.
\end{align*}
The proof is complete.
\end{proof}

\subsection{Linking geometry}
In this subsection, we establish the linking structure associated with the
splitting
\begin{equation*}
H^1_0(\Omega)=\mathcal{V}_k\oplus \mathcal{W}_k.
\end{equation*}
The role of the finite-dimensional space $V_k$ is to describe the directions in
which the quadratic part of the functional is dominated by the potential term,
whereas the complementary space $W_k$ provides the positive direction near the
origin. This is precisely the variational mechanism behind the saddle point
geometry.

More specifically, we shall prove that the functional $\mathcal{I}$ is positive on a small
sphere contained in $W_k$ and nonpositive on the boundary of a suitable
finite-dimensional cylinder built over $V_k$. These two estimates allow us to
apply a linking theorem and obtain a Palais-Smale sequence at a positive
minimax level. The main difficulty, which will be addressed in the next
subsection, is to show that this level lies below the critical
Trudinger-Moser threshold $2\pi/\alpha_0$. 
\begin{theorem}(Linking Theorem ~\cite[Theorem~1.2, p.~162]{RABINOWITZ1978161}.) \label{linkingtheorem}
Let $\mathcal{I} \in C^1(H_0^1(\Omega),\mathbb R)$ and suppose that $H_0^1(\Omega)=\mathcal V_k\oplus\mathcal W_k$, where $\dim \mathcal V_k<+\infty$. Assume that the following conditions hold:
\begin{boldromanenum}
    \item there exist $a>0$ and $\rho>0$ such that $ \mathcal{I}(u)\geq a,
    \; u\in\mathcal W_k,\; \Vert u\Vert_B=\rho;$

    \item there exist $z\in\mathcal W_k$, with $\Vert z\Vert_B=1$, and $R>\rho$ such that, if
    \begin{equation*}
    Q
    =
    \left\{
    v+tz:\ v\in\mathcal V_k,\ \Vert v\Vert_B\leq R,\ 0\leq t\leq R
    \right\},
    \end{equation*}
    then $ \mathcal{I}(u)\leq0,
    \;
    u\in\partial Q.$
\end{boldromanenum}
Let $\Gamma
=
\left\{
\eta\in C(Q,H_0^1(\Omega)):\ \eta(u)=u \ \text{for all } u\in\partial Q
\right\}$
and set
\begin{equation*}
\hat{c}
=
\inf_{\eta\in\Gamma}
\max_{u\in Q}\mathcal{I}(\eta(u)).
\end{equation*}
Then $\hat{c}\geq a.$
Moreover, $\mathcal{I}$ possesses a Palais-Smale sequence at level $\hat{c}$. In particular, if $\mathcal{I}$ satisfies the $(PS)_{\hat{c}}$ condition, then $\hat{c}$ is a critical value of $\mathcal{I}$.
\end{theorem}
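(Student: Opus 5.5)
The plan is the classical two-part argument: a topological intersection property gives $\hat c\ge a$, and a deformation argument produces the Palais--Smale sequence. Two preliminary facts come first. $\Gamma\neq\emptyset$, since the identity map $\eta=\mathrm{id}_Q$ belongs to $\Gamma$. And $\hat c<+\infty$, because $Q$ is compact (it is a bounded closed subset of the finite-dimensional space $\mathcal V_k\oplus\mathbb R z$), so $\max_{Q}\mathcal I\circ\eta$ is attained and finite for each $\eta\in\Gamma$. Throughout, $P:H^1_0(\Omega)\to\mathcal V_k$ denotes the $\langle\cdot,\cdot\rangle_B$-orthogonal projection, so that $I-P$ projects onto $\mathcal W_k$.

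\emph{Step 1 (intersection).} The claim is that every $\eta\in\Gamma$ satisfies $\eta(Q)\cap\{w\in\mathcal W_k:\Vert w\Vert_B=\rho\}\neq\emptyset$. Identify $\mathcal V_k\oplus\mathbb R z$ with $\mathbb R^{k+1}$, so that $Q$ is a closed cylinder, and define the continuous map $\Phi_\eta:Q\to\mathcal V_k\times\mathbb R$ by
\[
\Phi_\eta(u)=\bigl(P\eta(u),\ \Vert (I-P)\eta(u)\Vert_B\bigr).
\]
For $u=v+tz\in\partial Q$ we have $\eta(u)=u$. Since $z\in\mathcal W_k$ and $\Vert z\Vert_B=1$, this gives $\Phi_\eta(u)=(v,t)$. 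On $\partial Q$ one of $\Vert v\Vert_B=R$, $t=0$ or $t=R$ holds, and $0<\rho<R$, so $(v,t)\neq(0,\rho)$. Hence $\Phi_\eta(u)\neq(0,\rho)$ on $\partial Q$.

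Now take the homotopy $H_s=s\Phi_\eta+(1-s)\Phi_{\mathrm{id}}$ for $s\in[0,1]$. It coincides with $(v,t)$ on $\partial Q$ for every $s$, so it never takes the value $(0,\rho)$ there. By homotopy invariance of the Brouwer degree,
\[
\deg(\Phi_\eta,\operatorname{int}Q,(0,\rho))=\deg(\mathrm{id},\operatorname{int}Q,(0,\rho))=1,
\]
since $(0,\rho)$ is an interior point of $Q$. Therefore some $u\in Q$ satisfies $P\eta(u)=0$ and $\Vert\eta(u)\Vert_B=\rho$. Then $\eta(u)\in\mathcal W_k$ with $\Vert\eta(u)\Vert_B=\rho$, so hypothesis (i) gives $\max_Q\mathcal I\circ\eta\ge a$. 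Taking the infimum over $\eta\in\Gamma$ yields $\hat c\ge a>0$.

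\emph{Step 2 (Palais--Smale sequence).} Argue by contradiction: suppose there is $\varepsilon_0>0$ with $\Vert\mathcal I'(u)\Vert\ge\varepsilon_0$ whenever $|\mathcal I(u)-\hat c|\le 2\bar\varepsilon$. Shrink $\bar\varepsilon$ so that $2\bar\varepsilon<a$ and $\bar\varepsilon<\varepsilon_0^2/8$ (or whatever the quantitative deformation lemma requires). The deformation lemma for $C^1$ functionals, which uses a locally Lipschitz pseudo-gradient field, provides $\kappa\in C(H^1_0(\Omega),H^1_0(\Omega))$ such that
\[
\kappa(\mathcal I^{\hat c+\bar\varepsilon})\subset\mathcal I^{\hat c-\bar\varepsilon},\qquad \kappa=\mathrm{id}\ \text{on } \mathcal I^{\hat c-2\bar\varepsilon}.
\]
By hypothesis (ii), $\mathcal I\le 0<\hat c-2\bar\varepsilon$ on $\partial Q$, so $\kappa$ is the identity on $\partial Q$. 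Choose $\eta\in\Gamma$ with $\max_Q\mathcal I\circ\eta\le\hat c+\bar\varepsilon$. Then $\kappa\circ\eta\in\Gamma$ and $\max_Q\mathcal I\circ\kappa\circ\eta\le\hat c-\bar\varepsilon$, which contradicts the definition of $\hat c$. Hence there exist $u_n$ with $\mathcal I(u_n)\to\hat c$ and $\mathcal I'(u_n)\to0$. If in addition $(PS)_{\hat c}$ holds, a subsequence converges to some $u$, and continuity of $\mathcal I$ and $\mathcal I'$ gives $\mathcal I(u)=\hat c$ and $\mathcal I'(u)=0$.

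The main obstacle is Step 1, the degree argument. It needs $\Phi_\eta$ to be continuous into a finite-dimensional space, which holds because $P$ has finite-dimensional range and the norm is continuous. It also needs the boundary identification $\Phi_\eta=(v,t)$ to hold exactly. That is where the $B$-orthogonality of $\mathcal W_k=\mathcal V_k^{\perp_B}$ and the choice $z\in\mathcal W_k$ with $\Vert z\Vert_B=1$ are used: the $\mathcal W_k$-component of $v+tz$ is precisely $tz$, whose norm is $t$. Step 2 is the standard deformation argument. Its only delicate point is fixing $\bar\varepsilon<a/2$, so that the deformation leaves $\partial Q$ pointwise fixed and $\kappa\circ\eta$ stays in $\Gamma$.
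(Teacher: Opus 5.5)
Your proof is correct. The paper gives no proof of its own and simply quotes the result from Rabinowitz, and your argument is the standard proof behind that citation: the Brouwer-degree intersection via $\Phi_\eta=(P\eta,\Vert (I-P)\eta\Vert_B)$, which reduces to the identity on $\partial Q$, followed by the deformation lemma using the strict gap $\mathcal I\le 0<\hat c-2\bar\varepsilon$ on $\partial Q$ (the version producing a Palais--Smale sequence without assuming $(PS)_{\hat c}$ is the pseudo-gradient one, as in Willem's \emph{Minimax Theorems}, and your Step~2 handles exactly that).
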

\begin{lemma}\label{lem:finite-dimensional-negativity}
Assume that \ref{f3} and \ref{f5} hold. If
$Z\subset H_0^1(\Omega)$ is finite-dimensional, then
\begin{equation*}
\mathcal{I}(u)\rightarrow-\infty
\quad\text{as }\Vert u\Vert_B\rightarrow+\infty,
\quad
u\in Z.
\end{equation*}
\end{lemma}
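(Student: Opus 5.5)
The plan is to combine the superquadratic lower bound from Lemma~\ref{eq:superquadratic-F-infty} with the equivalence of all norms on the finite-dimensional space $Z$. Fix $\theta>2$. Lemma~\ref{eq:superquadratic-F-infty}, which uses only \ref{f3} and \ref{f5}, gives constants $C_{1,\theta},C_{2,\theta}>0$ with
\begin{equation*}
F(x,s)\geq C_{1,\theta}\vert s\vert^{\theta}-C_{2,\theta}, \quad (x,s)\in\overline{\Omega}\times\mathbb{R}.
\end{equation*}
Hence, for every $u\in Z$,
\begin{equation*}
\mathcal{I}(u)\leq \frac{1}{2}\Vert u\Vert_B^2-C_{1,\theta}\Vert u\Vert_{L^\theta(\Omega)}^{\theta}+C_{2,\theta}\vert\Omega\vert .
\end{equation*}

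Next, I use that $Z$ is finite-dimensional. Since $Z\subset H^1_0(\Omega)\hookrightarrow L^\theta(\Omega)$, the map $u\mapsto\Vert u\Vert_{L^\theta(\Omega)}$ is a norm on $Z$ (if $\Vert u\Vert_{L^\theta}=0$ then $u=0$ a.e., hence $u=0$ in $H^1_0$). All norms on $Z$ are equivalent, so there is $c_Z>0$ with $\Vert u\Vert_{L^\theta(\Omega)}\geq c_Z\Vert u\Vert_B$ for all $u\in Z$. To see this concretely, one can note that the continuous function $u\mapsto \Vert u\Vert_{L^\theta}$ attains a positive minimum on the compact unit $B$-sphere of $Z$. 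Plugging this in gives
\begin{equation*}
\mathcal{I}(u)\leq \frac{1}{2}\Vert u\Vert_B^2-C_{1,\theta}c_Z^{\theta}\Vert u\Vert_B^{\theta}+C_{2,\theta}\vert\Omega\vert,\quad u\in Z.
\end{equation*}
The right-hand side tends to $-\infty$ as $\Vert u\Vert_B\to+\infty$ because $\theta>2$, and this bound is uniform in the direction of $u$.

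There is no real obstacle here. The only point requiring care is that the divergence must be uniform over all directions in $Z$, not just along rays $t\varphi$ as in Lemma~\ref{mountainpassC}(ii); the equivalence-of-norms constant $c_Z$ is exactly what supplies this uniformity. I would also note explicitly that the case $Z=\{0\}$ is vacuous.
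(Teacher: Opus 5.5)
Your proposal is correct and follows the paper's proof: the same superquadratic lower bound $F(x,s)\geq C_{1,\theta}\vert s\vert^{\theta}-C_{2,\theta}$ from Lemma~\ref{eq:superquadratic-F-infty}, and the same constant from equivalence of norms on $Z$ giving $\Vert u\Vert_{L^\theta(\Omega)}\geq c_Z\Vert u\Vert_B$. Your added remarks (that $\Vert\cdot\Vert_{L^\theta(\Omega)}$ is a genuine norm on $Z$, that the divergence is uniform in the direction of $u$, and that $Z=\{0\}$ is vacuous) are accurate and only make explicit what the paper leaves implicit.
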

\begin{proof}
Let $Z\subset H_0^1(\Omega)$ be finite-dimensional.
Since all norms are equivalent on $Z$, and since $\Vert\cdot\Vert_{L^\theta(\Omega)}$ is a norm
on $Z$, there exists a constant $C_Z>0$ such that
\begin{equation}\label{eq:norm-equivalence-Z}
\Vert u\Vert_{L^\theta(\Omega)}
\geq
C_Z\Vert u\Vert_B,
\quad
u\in Z.
\end{equation}
Using Lemma \ref{eq:superquadratic-F-infty} and \eqref{eq:norm-equivalence-Z}, for every
$u\in Z$ we get
\begin{align*}
\mathcal{I}(u)
&=
\frac12\Vert u\Vert_B^2-\int_\Omega F(x,u)\,\mathrm{d}x\\
&\leq
\frac12\Vert u\Vert_B^2
-
C_{1,\theta}\int_\Omega \vert u\vert ^\theta\,\mathrm{d}x
+
C_{2,\theta}\vert \Omega\vert \\
&=
\frac12\Vert u\Vert_B^2
-
C_{1,\theta}\Vert u\Vert_{L^\theta(\Omega)}^\theta
+
C_{2,\theta} \vert \Omega\vert \\
&\leq
\frac12\Vert u\Vert_B^2
-
C_{1,\theta} C_Z^\theta\Vert u\Vert_B^\theta
+
C_{2,\theta}\vert \Omega\vert .
\end{align*}
Since $\theta>2$, the negative term of order $\Vert u\Vert_B^\theta$ dominates the
quadratic term. Therefore
\begin{equation*}
\mathcal{I}(u)\to-\infty
\quad\text{as }\Vert u\Vert_B\to+\infty,
\quad u\in Z.
\end{equation*}
The proof is complete.
\end{proof}

\begin{lemma}\label{linking-geometry}
Assume that \ref{f1}, \ref{f3}, \ref{f5}, \ref{f7}, \ref{f8} hold and $\delta \leq \lambda_{1} + \gamma$. Then the functional $\mathcal{I}$ satisfies the linking geometry. More precisely, there exist \(a>0\), \(\rho>0\), and \(n_0\in\mathbb N\)
such that, for every \(n\geq n_0\), one can find \(R_n>\rho\) and
\(z_n\in\mathcal W_k\), with $\Vert z_n \Vert_B=1$, satisfying the following
properties:
\begin{boldromanenum}
   \item
    $
    \mathcal I(u)\geq a,
    $ for $u\in\mathcal W_k$ with $\Vert u \Vert_B=\rho$;
    \item
    $
    \mathcal I(u)\leq 0,
    $ for 
    $u\in\partial Q_n$, where
    \[
    Q_n
    =
    \left\{
    v+tz_n:\ v\in\mathcal V_k,\ \Vert v \Vert_B\leq R_n,\ 0\leq t\leq R_n
    \right\}.
    \]
\end{boldromanenum}
\end{lemma}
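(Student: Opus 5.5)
For part (i), I will work on $\mathcal W_k$ and combine the spectral inequality of Lemma~\ref{eq:spectralinequalities}(ii) with the local bound \ref{f7} and a critical-growth bound away from zero. By \ref{f7} and \ref{f1}, for fixed $\alpha>\alpha_0$ and $q>2$ there is $C>0$ such that
\[
F(x,s)\le \frac{\mu_*}{2}s^2 + C\vert s\vert^q\exp(\alpha s^2),\qquad (x,s)\in\overline\Omega\times\mathbb R .
\]
For $\vert s\vert\le\delta_0$ this is exactly \ref{f7}. For $\vert s\vert>\delta_0$, the function $F/(\vert s\vert^q e^{\alpha s^2})$ is bounded: by \ref{f3} we have $F\le K_0\vert f\vert$ for large $\vert s\vert$, and \ref{f1} controls $f$; on compact sets of $s$ away from $0$ the quotient is bounded by continuity.

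Then, for $u\in\mathcal W_k$ with $\Vert u\Vert_B=\rho$, I will use Hölder, the Sobolev embedding, and Lemma~\ref{eq:TMeditadaC} with $p\alpha\rho^2<4\pi$. This gives
\[
\mathcal I(u)\ge \frac12\Big(1-\frac{\mu_*}{\widehat\lambda_{k+1}}\Big)\rho^2 - C'\rho^q .
\]
Since $\mu_*<\widehat\lambda_{k+1}$, choosing $\rho$ small yields some $a>0$.

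For part (ii), I will take $z_n$ to be the normalized $\mathcal W_k$-projection of the Moser functions, $z_n = P_{\mathcal W}G_n/\Vert P_{\mathcal W}G_n\Vert_B$, which is needed for the later level estimate. For large $n$ the projection is nonzero, because $\Vert P_{\mathcal V}G_n\Vert_B\to0$: the components $\langle G_n,\varphi_j\rangle_B=\widehat\lambda_j\int G_n\varphi_j$ tend to $0$ since $\Vert G_n\Vert_{L^2}=O((\log n)^{-1/2})$. This is what produces $n_0$.

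The boundary $\partial Q_n$ has three pieces.
\begin{enumerate}
\item On $\{v\in\mathcal V_k,\ \Vert v\Vert_B\le R_n\}$ (i.e.\ $t=0$), I will use \ref{f8} together with Lemma~\ref{eq:spectralinequalities}(i):
\[
\mathcal I(v)\le \frac12\widehat\lambda_k\Vert v\Vert_2^2-\frac{\widehat\lambda_k+\sigma}{2}\Vert v\Vert_2^2\le 0 .
\]
\item On the top $t=R_n$ and on the lateral part $\Vert v\Vert_B=R_n$, I will invoke Lemma~\ref{lem:finite-dimensional-negativity} on the finite-dimensional space $Z_n=\mathcal V_k\oplus\mathbb R z_n$. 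It gives $\mathcal I(w)\to-\infty$ as $\Vert w\Vert_B\to\infty$ in $Z_n$, so there is $M_n$ with $\mathcal I\le 0$ on $Z_n\setminus B_{M_n}$.
\item For points with $t\ge0$, the $B$-orthogonality gives $\Vert v+tz_n\Vert_B^2=\Vert v\Vert_B^2+t^2$. On the top and lateral pieces this norm is at least $R_n$, so taking $R_n>\max\{\rho,M_n\}$ finishes.
\end{enumerate}

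The main subtlety is that $M_n$ depends on $n$ through $Z_n$, which is why the statement lets $R_n$ depend on $n$. A uniform bound in $n$ is not needed here, so I expect no real obstacle. The only care required is ensuring $P_{\mathcal W}G_n\ne0$, plus the bookkeeping of constants in part (i).
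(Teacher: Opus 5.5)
Your proposal is correct and follows essentially the same route as the paper. Part (i) combines the growth bound from \ref{f1}, \ref{f7} with H\"older, the $B$-norm Trudinger--Moser inequality and the spectral inequality on $\mathcal W_k$. Part (ii) uses the normalized $\mathcal W_k$-projection of $G_n$, the bottom face is handled via \ref{f8} and Lemma~\ref{eq:spectralinequalities}(i), and the top and lateral faces via $B$-orthogonality and Lemma~\ref{lem:finite-dimensional-negativity} on $\mathcal V_k\oplus\mathbb R z_n$.

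The differences are cosmetic. You use $\mu_*$ directly rather than $\mu_*+\varepsilon$, which is legitimate because \ref{f7} holds with $\mu_*$ exactly for $\vert s\vert\le\delta_0$. You obtain $\Vert P_kG_n\Vert_B\to0$ from $\Vert G_n\Vert_{L^2(\Omega)}\to0$ instead of from $G_n\rightharpoonup0$.
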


\begin{proof}
We first prove the positive geometry on $\mathcal W_k$. For this, let $\varepsilon>0$, $q>2$, and $\alpha>\alpha_0$ be fixed. By \ref{f1} and \ref{f7} there exists $C_{1} >0$ such that
\begin{equation*}
F(x,s)
\leq
\frac{\mu_{*}+\varepsilon}{2}s^2
+
C_{1} \vert s \vert^q\exp(\alpha s^2),
\quad
(x,s)\in\overline{\Omega}\times\mathbb R.
\end{equation*}
From H\"{o}lder's inequality (for $p' = q' = 2$), $\Vert u \Vert_{B} = \rho$ and taking $\rho$ such that $2 \alpha \rho^{2} \leq 4 \pi,$ by Trudinger-Moser inequality Lemma \ref{eq:TMeditadaC} we have that 
\begin{align*}
    \int_{\Omega} F(x,u) \; \mathrm{d}x & \leq  \frac{\mu_{*}+\varepsilon}{2} \Vert u \Vert_{L^{2}(\Omega)}^{2} + C_{1} \Vert u \Vert^{q}_{L^{2q}(\Omega)} \left(\int_{\Omega} \exp(2 \alpha u^{2}) \; \mathrm{d}x\right)^{1/2} \\ &=  \frac{\mu_{*}+\varepsilon}{2} \Vert u \Vert_{L^{2}(\Omega)}^{2} + C_{1} \Vert u \Vert^{q}_{L^{2q}(\Omega)} \left(\int_{\Omega} \exp(2 \alpha \Vert u \Vert_{B}^{2} (u/ \Vert u \Vert_{B})^{2}) \; \mathrm{d}x\right)^{1/2} \\ & = \frac{\mu_{*}+\varepsilon}{2} \Vert u \Vert_{L^{2}(\Omega)}^{2} + C_{1} \Vert u \Vert^{q}_{L^{2q}(\Omega)} \left(\int_{\Omega} \exp(2 \alpha \rho^{2} (u/ \Vert u \Vert_{B})^{2}) \; \mathrm{d}x\right)^{1/2} \\ &\leq \frac{\mu_{*}+\varepsilon}{2} \Vert u \Vert_{L^{2}(\Omega)}^{2} + C_{2} \Vert u \Vert^{q}_{L^{2q}(\Omega)},
\end{align*}
where $C_{2} > 0$ is a positive constant.

Let $u\in\mathcal W_k$. Using the spectral inequality on $\mathcal W_k$ (Lemma \ref{eq:spectralinequalities} $(ii)$), Sobolev embedding and the previous estimate, we obtain
\begin{align*}
\mathcal{I}(u)
&=
\frac12\Vert u\Vert_B^2-\int_{\Omega}F(x,u)\, \mathrm{d}x\\
&\geq
\frac12\Vert u\Vert_B^2
-
\frac{\mu_{*}+\varepsilon}{2}\Vert u\Vert_{L^2(\Omega)}^2
-
C_{2} \Vert u \Vert^{q}_{L^{2q}(\Omega)}\\
&\geq
\frac12
\left(
1-\frac{\mu_{*}+\varepsilon}{\hat{\lambda}_{k+1}}
\right)
\Vert u\Vert_B^2
-
C_{2} \Vert u \Vert^{q}_{L^{2q}(\Omega)} \\ & \geq \frac12
\left(
1-\frac{\mu_{*}+\varepsilon}{\hat{\lambda}_{k+1}}
\right)
\Vert u\Vert_B^2
- C_{3} \Vert u \Vert^{q}_{B}.
\end{align*}
Since $\mu_{*}<\hat{\lambda}_{k+1}$, we choose $\varepsilon>0$ small enough so that
$
1-(\mu_{*}+\varepsilon)/\hat{\lambda}_{k+1}>0.
$
Therefore, item $(i)$ holds for $\rho > 0$ sufficiently small.

We now prove the nonpositive estimate on the finite-dimensional part. Let $G_n$ be the Moser function defined in \eqref{eq:mosersequenced}. Let \(P_k:H_0^1(\Omega)\to \mathcal{V}_k\) be the \(B\)-orthogonal projection onto
\(\mathcal{V}_k\). Define
\begin{equation*}
\widetilde z_n \coloneqq G_{n}-P_{k}G_{n}.
\end{equation*}
Then $\widetilde z_n\in \mathcal{W}_k$. 

Since $G_n\rightharpoonup 0$ in $H_0^1(\Omega)$, we claim that
$
P_kG_n\rightarrow 0,
$
in $H_0^1(\Omega)$. Indeed, since
$
\mathcal V_k=\operatorname{span}\{\varphi_1,\ldots,\varphi_k\},
$
and $(\varphi_j)$ diagonalizes the operator $\operatorname{T} = -\Delta+B$, we may write
\begin{equation*}
P_kG_n
=
\sum_{j=1}^{k}
\frac{\langle G_n,\varphi_j\rangle_B}
{\Vert \varphi_j\Vert_B^2}
\varphi_j.
\end{equation*}
For each fixed $j=1,\ldots,k$, the weak convergence $G_n\rightharpoonup0$ in $H_0^1(\Omega)$ gives
$
\langle G_n,\varphi_j\rangle_B\rightarrow0, 
$
as $n \rightarrow +\infty$. Therefore, for each $j=1,\ldots,k$ we have
\begin{equation*}
\frac{\langle G_n,\varphi_j\rangle_B}
{\Vert \varphi_j\Vert_B^2}
\rightarrow0,
\quad \mbox{as } n \rightarrow +\infty.
\end{equation*}
Since the sum is finite, it follows that
$
P_kG_n\rightarrow0
$
strongly in $(\mathcal V_k,\Vert\cdot\Vert_B)$.
Furthermore, since \(V_k\) is finite-dimensional, convergence in \((V_k,\Vert \cdot \Vert_B)\)
is equivalent to convergence in \(H_0^1(\Omega)\). We conclude that
$
P_kG_n\rightarrow0
$ strongly in $H_0^1(\Omega)$,
that is,
$
\Vert P_kG_n\Vert_B=o_n(1).
$
Therefore, using the asymptotic estimate \eqref{eq:normassintoticad} and the Cauchy–Schwarz inequality, we deduce
\begin{align*}
\Vert \widetilde z_n\Vert_B^2
&=
\Vert G_n-P_kG_n\Vert_B^2\\
&=
\Vert G_n\Vert_B^2
-
2\langle G_n,P_kG_n\rangle_B
+
\Vert P_kG_n\Vert_B^2\\
&=
1+o_n(1).
\end{align*}

For each $n \in \mathbb{N}$, define
$
z_n=\widetilde z_n/\Vert \widetilde z_n\Vert_B.
$
Then $z_n\in\mathcal W_k$ and $\Vert z_n\Vert_B=1$. It remains to prove that \(\mathcal{I}(u)\leq0\) for all \(u\in\partial Q_n\), provided
\(R_n>0\) is chosen sufficiently large. For this, let
$
Z_n=\mathcal V_k\oplus\mathbb R z_n.
$
Since $Z_n$ is finite-dimensional, for $u\in Z_n$, it follows from Lemma \ref{lem:finite-dimensional-negativity} that
\begin{equation*}
\mathcal{I}(u)\to-\infty,
\quad\text{as }\Vert u\Vert_B\to+\infty.
\end{equation*}
Hence, we may choose
$R_n>\rho$, possibly depending on $n \in \mathbb{N},$ such that
$
\mathcal{I}(u)\leq0,
$
for
$u\in Z_n$ with $\Vert u\Vert_B\geq R_{n}$.

We now prove that $\mathcal{I}(u) \leq0$ for each $u \in \partial Q_n$. The boundary of $Q_n$ relative to
$Z_n$ is contained in the union
$
\partial Q_n
\subset
Q_{n}^{0}\cup Q_{n}^{1}\cup Q_{n}^{2},
$
where
\begin{align*}
Q_{n}^{0}
&=
\left\{
v\in\mathcal V_k:\ \Vert v\Vert_B\leq R_{n}
\right\},\\
Q_{n}^{1}
&=
\left\{
v+tz_n:\ v\in\mathcal V_k,\ \Vert v\Vert_B=R_{n},\ 0\leq t\leq R_{n}
\right\},\\
Q_{n}^{2}
&=
\left\{
v+R_{n}z_n:\ v\in\mathcal V_k,\ \Vert v\Vert_B\leq R_{n}
\right\}.
\end{align*}
If $u\in Q_{n}^{0}$, then $u\in\mathcal V_k$. In view of \ref{f8} and the spectral
inequality on $\mathcal V_k$ (Lemma \ref{eq:spectralinequalities} $(i)$), we deduce
\begin{align*}
\mathcal{I}(u)
&=
\frac12\Vert u\Vert_B^2-\int_\Omega F(x,u)\,\mathrm{d}x\\
&\leq
\frac12\hat{\lambda}_k\Vert u\Vert_{L^2(\Omega)}^2
-
\frac12(\hat{\lambda}_k+\sigma)\Vert u\Vert_{L^2(\Omega)}^2\\
&=
-\frac{\sigma}{2}\Vert u\Vert_{L^2(\Omega)}^2
\leq0.
\end{align*}
Now, let $u \in Q_{n}^{1}$. Then $u=v+tz_n$, with
$v\in\mathcal V_k$, $\Vert v\Vert_B=R_{n}$, and $0\leq t\leq R_{n}$. Since
$z_n\in\mathcal W_k$, $\Vert z_n\Vert_B=1$, and
$\mathcal V_k$ is orthogonal to $\mathcal W_k$ with respect to
$\langle\cdot,\cdot\rangle_B$, we obtain
\begin{equation*}
\Vert u\Vert_B^2
=
\Vert v\Vert_B^2+t^2\Vert z_n\Vert_B^2
=
R_{n}^2+t^2
\geq R_{n}^2.
\end{equation*}
Thus $\Vert u\Vert_B\geq R_{n}$. Since $u\in Z_n$, the choice of $R_{n}$ gives
$
\mathcal{I}(u)\leq0.
$
If $u\in Q_{n}^{2}$, then $u=v+R_{n}z_n$, with
$v\in\mathcal V_k$ and $\Vert v\Vert_B\leq R_{n}$. Again, by the
$\langle\cdot,\cdot\rangle_B$-orthogonality between $\mathcal V_k$ and
$\mathcal W_k$, we have
\begin{equation*}
\Vert u\Vert_B^2
=
\Vert v\Vert_B^2+R_{n}^2\Vert z_n\Vert_B^2
=
\Vert v\Vert_B^2+R_{n}^2
\geq R_{n}^2.
\end{equation*}
Hence $\Vert u\Vert_B\geq R_{n}$. Since $u\in Z_n$, it follows once more from the choice of $R_{n}$
that
$
\mathcal{I}(u)\leq0.
$
Therefore,
$
\mathcal{I}(u)\leq0,
$
for
$
u\in\partial Q_n.
$
The proof is complete.
\end{proof}
\subsection{The critical linking level estimate}
We are now in a position to introduce the minimax level associated with the linking structure obtained above. For each sufficiently large \(n\in\mathbb N\), so that the conclusions of the previous lemma hold, let
\begin{equation*}
Q_n
=
\left\{
v+tz_n:\ v\in\mathcal V_k,\ \Vert v\Vert_B\leq R_{n},\ 0\leq t\leq R_{n}
\right\}.
\end{equation*}
Define
\begin{equation*}
\Gamma_n
=
\left\{
\eta\in C(Q_n,H_0^1(\Omega)):\ \eta(u)=u \ \text{for all } u\in\partial Q_n
\right\}.
\end{equation*}
The corresponding linking minimax level is given by
\begin{equation*}
\hat{c}_n
=
\inf_{\eta\in\Gamma_n}
\max_{u\in Q_n}\mathcal{I}(\eta(u)).
\end{equation*}
In view of Lemma \ref{linking-geometry} and Linking Theorem \ref{linkingtheorem}, we have
$
\hat{c}_n\geq a>0.
$
Moreover, there exists a Palais-Smale sequence for $\mathcal{I}$ at the level $\hat{c}_n$.

The concentration estimate must be compatible with the spectral
decomposition used in the linking construction. Since the concentrating
direction is required to belong to $\mathcal{W}_k$, the Moser function $G_n$
cannot be used directly, and we therefore remove its $\mathcal{V}_k$-component
by means of the spectral projection $P_k$.
\begin{lemma}[Projected Moser estimate in the $B$-norm] \label{lem:projected-moser-B} Suppose that $\delta \leq \lambda_{1} + \gamma$. 
Let $G_n$ be the Moser function defined in \eqref{eq:mosersequenced}, and let $P_k$ denote the
$B$-orthogonal projection onto $\mathcal V_k$ with respect to the inner product
$\langle\cdot,\cdot\rangle_B$. For \(n\in\mathbb{N}\) sufficiently large, set
$
\widetilde z_n
=
G_n-P_kG_n
$
and
$
z_n
=
\widetilde z_n/\Vert \widetilde z_n\Vert_B.
$
Then $z_n\in\mathcal W_k$, $\Vert z_n\Vert_B=1$, and
$
z_n\rightharpoonup0,
$
weakly in $H_0^1(\Omega)$. Moreover, there exists a constant $C_{B,0}>0$ such that
\begin{equation}\label{eq:projected-moser-pointwise}
z_n^2(x)
\geq
\frac{\log n}{2\pi}
-
\frac{C_{B,0}}{4\pi^2}
+
o_n(1),
\quad
x\in B(x_0,d/n).
\end{equation}
Consequently, if
$
t_n^2\geq 4\pi/\alpha_0
$
for $n\in\mathbb{N}$ sufficiently large, then
\[
\liminf_{n\to+\infty}
\int_{B(x_0,d/n)}
\exp(\alpha_0t_n^2z_n^2)\,\mathrm{d}x
\geq
\pi d^2\exp\left(-\frac{C_{B,0}}{\pi}\right).
\]
\end{lemma}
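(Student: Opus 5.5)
The first assertions are quick consequences of what was done in the proof of Lemma~\ref{linking-geometry}. Since $P_k$ is the $B$-orthogonal projection onto $\mathcal V_k$, we get $\widetilde z_n\in\mathcal W_k$. We already showed $\Vert P_kG_n\Vert_B=o_n(1)$ and $\Vert\widetilde z_n\Vert_B^2=1+o_n(1)$, so $z_n$ is well defined for $n$ large and $\Vert z_n\Vert_B=1$. For weak convergence, note that $G_n\rightharpoonup0$ in $H^1_0(\Omega)$ (it is bounded and $G_n\to0$ in $L^2$ by the computation after Lemma~\ref{eq:lemadanorma0711C}). Together with $P_kG_n\to0$ strongly, this gives $\widetilde z_n\rightharpoonup0$. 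Dividing by $\Vert\widetilde z_n\Vert_B\to1$ preserves this, so $z_n\rightharpoonup0$.

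The core of the argument is the pointwise estimate \eqref{eq:projected-moser-pointwise} on $B(x_0,d/n)$, where $G_n\equiv(\log n)^{1/2}/\sqrt{2\pi}$. Write $P_kG_n=\sum_{j\le k}a_{j,n}\varphi_j$ with $a_{j,n}=\langle G_n,\varphi_j\rangle_B/\hat\lambda_j$. The plan is to bound $a_{j,n}$ quantitatively. Since $\langle G_n,\varphi_j\rangle_B=\hat\lambda_j\int_\Omega G_n\varphi_j\,\mathrm{d}x$ (integrate by parts using $T\varphi_j=\hat\lambda_j\varphi_j$ and the self-adjointness of $B$), we have $a_{j,n}=\int_\Omega G_n\varphi_j\,\mathrm{d}x$. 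Hence $\vert a_{j,n}\vert\le\Vert\varphi_j\Vert_{L^\infty}\Vert G_n\Vert_{L^1}$. By polar coordinates, $\Vert G_n\Vert_{L^1}=O((\log n)^{-1/2})$, because $\int_{1/n}^1 r\log(1/r)\,\mathrm{d}r$ is bounded. Since the $\varphi_j$ are bounded (elliptic regularity on a smooth domain), we obtain $\Vert P_kG_n\Vert_{L^\infty}\le C_k(\log n)^{-1/2}$.

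Next, expand $z_n^2$ on $B(x_0,d/n)$:
\begin{equation*}
z_n^2=\frac{(G_n-P_kG_n)^2}{\Vert\widetilde z_n\Vert_B^2}\ge\frac{G_n^2-2G_n\vert P_kG_n\vert}{\Vert\widetilde z_n\Vert_B^2}.
\end{equation*}
The cross term $G_n\vert P_kG_n\vert$ is $O(1)$ by the bound just obtained. For the denominator, write $\Vert\widetilde z_n\Vert_B^2=\Vert G_n\Vert_B^2-\Vert P_kG_n\Vert_B^2\le1+\varepsilon_n\le1+C_G/\log n$, using \eqref{eq:normassintoticad} and the constant $C_G$ from Lemma~\ref{eq:lemadonivelc}. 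Then
\begin{equation*}
\frac{\log n}{2\pi(1+C_G/\log n)}=\frac{\log n}{2\pi}-\frac{C_G}{2\pi}+o_n(1).
\end{equation*}

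\textbf{Main obstacle.} The cross term is $O(1)$ but not $o_n(1)$, so it has to be absorbed into the constant. I would define $C_{B,0}$ so that $C_{B,0}/(4\pi^2)$ dominates both $C_G/(2\pi)$ and $\limsup 2G_n\Vert P_kG_n\Vert_\infty$, which is at most $2C_k\sqrt{k}/\sqrt{2\pi}$-type constants. This yields \eqref{eq:projected-moser-pointwise}. If a sharper estimate were needed, I would use the more careful bound $\Vert G_n\Vert_{L^1}\lesssim(\log n)^{-1/2}$ with explicit constants, but only existence of the constant is required here.

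For the final assertion, suppose $t_n^2\ge4\pi/\alpha_0$. Then on $B(x_0,d/n)$,
\begin{equation*}
\alpha_0t_n^2z_n^2\ge4\pi\Bigl(\frac{\log n}{2\pi}-\frac{C_{B,0}}{4\pi^2}+o_n(1)\Bigr)=2\log n-\frac{C_{B,0}}{\pi}+o_n(1).
\end{equation*}
The lower bound $z_n^2\ge0$ is not an issue here, because the right-hand side of \eqref{eq:projected-moser-pointwise} is positive for $n$ large. Integrating over a ball of area $\pi d^2/n^2$ gives
\begin{equation*}
\int_{B(x_0,d/n)}\exp(\alpha_0t_n^2z_n^2)\,\mathrm{d}x\ge\pi d^2\exp\Bigl(-\frac{C_{B,0}}{\pi}+o_n(1)\Bigr).
\end{equation*}
Taking $\liminf$ concludes the proof.
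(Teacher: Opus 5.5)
Your proposal is correct and follows essentially the same route as the paper: identify $P_kG_n$ with the $L^2$ spectral projection, obtain $\|P_kG_n\|_{L^\infty}=O((\log n)^{-1/2})$, bound $\|\widetilde z_n\|_B^2\le 1+O(1/\log n)$ via the Pythagorean identity, and absorb the resulting $O(1)$ errors on $B(x_0,d/n)$ into $C_{B,0}$. The differences are cosmetic. You bound the coefficients through $\|G_n\|_{L^1}$ rather than $\|G_n\|_{L^2}$, and you expand $(G_n-P_kG_n)^2$ directly instead of estimating $z_n$ and then squaring; as a side effect, you do not need the quantitative bound on $\|P_kG_n\|_B$ or the spectral inequality on $\mathcal V_k$.
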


\begin{proof}
Since the eigenfunctions $\{\varphi_j\}_{j\in\mathbb N}$ are
orthonormal in $L^2(\Omega)$ and diagonalize the operator
$\operatorname{T} = -\Delta+B$, we have
$
\langle \varphi_i,\varphi_j\rangle_B
=
\hat{\lambda}_i\delta_{ij},
$
for $i,j\in\mathbb N$. Consequently, for every $u\in H_0^1(\Omega)$, its
$B$-orthogonal projection onto $\mathcal V_k$ is given by
\begin{equation*}
P_ku
=
\sum_{j=1}^k
\frac{\langle u,\varphi_j\rangle_B}
{\Vert \varphi_j\Vert_B^2}\varphi_j.
\end{equation*}
Since
$
\Vert \varphi_j\Vert_B^2=\hat{\lambda}_j
$
and
\begin{equation*}
\langle u,\varphi_j\rangle_B
=
\hat{\lambda}_j
\int_\Omega u\varphi_j\,\mathrm{d}x,
\end{equation*}
we obtain
\begin{equation} \label{eq:identidadedaprojecao2307}
P_ku
=
\sum_{j=1}^k
\left(
\int_\Omega u\varphi_j\,\mathrm{d}x
\right)\varphi_j.
\end{equation}
Thus, the $B$-orthogonal projection onto $\mathcal V_k$
coincides with the usual $L^2(\Omega)$-orthogonal spectral
projection onto
$
\mathcal V_k
=
\operatorname{span}\{\varphi_1,\ldots,\varphi_k\}.
$
Applying the identity \eqref{eq:identidadedaprojecao2307} to $G_n$, we have
\begin{equation*}
P_kG_n
=
\sum_{j=1}^k
\left(
\int_\Omega G_n\varphi_j\,\mathrm{d}x
\right)\varphi_j.
\end{equation*}
Since $\varphi_j\in L^\infty(\Omega)$ for every
$j=1,\ldots,k$, Hölder's inequality gives
\begin{align*}
\Vert P_kG_n\Vert_{L^\infty(\Omega)}
&\leq
\sum_{j=1}^k
\left\vert 
\int_\Omega G_n\varphi_j\,\mathrm{d}x
\right\vert 
\Vert \varphi_j\Vert_{L^\infty(\Omega)}
\\
&\leq
\left(
\sum_{j=1}^k
\Vert \varphi_j\Vert_{L^\infty(\Omega)}
\right)
\Vert G_n\Vert_{L^2(\Omega)}.
\end{align*}
The estimates for the Moser sequence imply that
\begin{equation*}
\Vert G_n\Vert_{L^2(\Omega)}^2
=
O\left(\frac{1}{\log n}\right).
\end{equation*}
Therefore, there exists $C_{1,k}>0$ such that
\begin{equation}
\Vert P_kG_n\Vert_{L^\infty(\Omega)}
\leq
\frac{C_{1,k}}{\sqrt{\log n}},
\label{eq:projection-Moser-Linfty}
\end{equation}
for $n\in\mathbb{N}$ sufficiently large.

We next estimate the $B$-norm of $P_kG_n$. Since
$P_kG_n\in\mathcal V_k$, the spectral inequality on
$\mathcal V_k$ (Lemma \ref{eq:spectralinequalities} $(i)$) yields
\begin{equation*}
\Vert P_kG_n\Vert_B^2
\leq
\hat{\lambda}_k
\Vert P_kG_n\Vert_{L^2(\Omega)}^2.
\end{equation*}
Moreover, because $P_k$ is also the $L^2(\Omega)$-orthogonal
projection onto $\mathcal V_k$, we have
\begin{equation*}
\Vert P_kG_n\Vert_{L^2(\Omega)}
\leq
\Vert G_n\Vert_{L^2(\Omega)}.
\end{equation*}
Consequently, there exists $C_{2,k}>0$ such that
\begin{equation}
\Vert P_kG_n\Vert_B^2
\leq
\frac{C_{2,k}}{\log n}.
\label{eq:projection-Moser-Bnorm}
\end{equation}

By the definition
$
\widetilde z_n=G_n-P_kG_n,
$
we have $\widetilde z_n\in\mathcal W_k$ and
$
\langle \widetilde z_n,P_kG_n\rangle_B=0.
$
Hence, by the Pythagorean identity,
\begin{equation*}
\Vert G_n\Vert_B^2
=
\Vert \widetilde z_n\Vert_B^2
+
\Vert P_kG_n\Vert_B^2.
\end{equation*}
Using
\begin{equation*}
\Vert G_n\Vert_B^2
=
1+O\left(\frac{1}{\log n}\right)
\end{equation*}
together with \eqref{eq:projection-Moser-Bnorm}, we obtain
\begin{equation*}
\Vert \widetilde z_n\Vert_B^2
=
1+O\left(\frac{1}{\log n}\right).
\end{equation*}
In particular,
$
\Vert \widetilde z_n\Vert_B\rightarrow1,
$
as $n \rightarrow +\infty$. Thus, $z_n$ is well defined for all sufficiently large $n$,
belongs to $\mathcal W_k$, and satisfies
$
\Vert z_n\Vert_B=1.
$
Since $G_n\rightharpoonup0$ in $H_0^1(\Omega)$ and
$\mathcal V_k$ is finite-dimensional, we have
$
P_kG_n\rightarrow0,
$
strongly in $H_0^1(\Omega)$. Therefore,
$
\widetilde z_n
=
G_n-P_kG_n
\rightharpoonup0,
$
weakly in $H_0^1(\Omega)$. Since $\Vert \widetilde z_n\Vert_B\to1$, it follows that
$
z_n
=
\widetilde z_n/\Vert \widetilde z_n\Vert_B\rightharpoonup0,
$
weakly in $H_0^1(\Omega)$.

It remains to establish the pointwise estimate on the
concentration ball. Recall that
\begin{equation*}
G_n(x)
=
\frac{\sqrt{\log n}}{\sqrt{2\pi}}, \quad x\in B(x_0,d/n).
\end{equation*}
In view of \eqref{eq:projection-Moser-Linfty}, we have
\begin{equation*}
\widetilde z_n(x)
=
G_n(x)-P_kG_n(x)
\geq
\frac{\sqrt{\log n}}{\sqrt{2\pi}}
-
\frac{C_{1,k}}{\sqrt{\log n}}.
\end{equation*}
On the other hand, there exists $C_{3,k}>0$ such that
\begin{equation*}
\Vert \widetilde z_n\Vert_B
\leq
\left(
1+\frac{C_{3,k}}{\log n}
\right)^{1/2}.
\end{equation*}
Therefore, uniformly for $x\in B(x_0,d/n)$,
\begin{equation*}
z_n(x)
=
\frac{\widetilde z_n(x)}
{\Vert \widetilde z_n\Vert_B}
\geq
\frac{
\dfrac{\sqrt{\log n}}{\sqrt{2\pi}}
-
\dfrac{C_{1,k}}{\sqrt{\log n}}
}{
\left(
1+\dfrac{C_{3,k}}{\log n}
\right)^{1/2}
}.
\end{equation*}
Using
\begin{equation*}
\left(
1+\frac{C_{3,k}}{\log n}
\right)^{-1/2}
=
1-\frac{C_{3,k}}{2\log n}
+
O\left(\frac{1}{(\log n)^2}\right),
\end{equation*}
we find a constant $C_{4,k}>0$ such that
\begin{equation*}
z_n(x)
\geq
\frac{\sqrt{\log n}}{\sqrt{2\pi}}
-
\frac{C_{4,k}}{\sqrt{\log n}}
+
o\left(\frac{1}{\sqrt{\log n}}\right),
\end{equation*}
uniformly for $x\in B(x_0,d/n)$. For \(n\in\mathbb{N}\) large, the right-hand side is positive. Hence, squaring the
preceding inequality and enlarging the constant if necessary, we obtain, for some constant $C_{B,0} > 0$,
\begin{equation}
z_n^2(x)
\geq
\frac{\log n}{2\pi}
-
\frac{C_{B,0}}{4\pi^2}
+
o_n(1),
\label{eq:projected-Moser-pointwise}
\end{equation}
uniformly for \(x\in B(x_0,d/n)\).

Finally, suppose that
$
t_n^2\geq 4\pi/\alpha_0,
$
for $n\in\mathbb{N}$ sufficiently large. From
\eqref{eq:projected-Moser-pointwise}, we obtain, uniformly
for $x\in B(x_0,d/n)$,
\begin{equation*}
\alpha_0t_n^2z_n^2(x)
\geq
4\pi z_n^2(x)
\geq
2\log n-\frac{C_{B,0}}{\pi}+o_n(1).
\end{equation*}
It follows that
\begin{align*}
\int_{B(x_0,d/n)}
\exp(\alpha_0t_n^2z_n^2)\,\mathrm{d}x
&\geq
\exp\left(
2\log n-\frac{C_{B,0}}{\pi}+o_n(1)
\right)
\left\vert B(x_0,d/n)\right\vert 
\\
&=
\pi d^2
\exp\left(
-\frac{C_{B,0}}{\pi}+o_n(1)
\right).
\end{align*}
Taking the lower limit as $n\to+\infty$, we conclude that
\begin{equation*}
\liminf_{n\to+\infty}
\int_{B(x_0,d/n)}
\exp(\alpha_0t_n^2z_n^2)\,\mathrm{d}x
\geq
\pi d^2
\exp\left(-\frac{C_{B,0}}{\pi}\right).
\end{equation*}
This completes the proof.
\end{proof}

\begin{lemma}\label{lem:bounded-maximizing-parameters}
Assume that \ref{f1}, \ref{f3}-\ref{f5}, \ref{f7}-\ref{f8} hold and $\delta \leq \lambda_{1} + \gamma$. Suppose that
\begin{equation*}
\max_{u\in Q_n}\mathcal{I}(u)
\geq
\frac{2\pi}{\alpha_0},
\end{equation*}
for $n\in\mathbb{N}$ sufficiently large. For each such $n\in\mathbb{N}$, let
\begin{equation*}
u_n=v_n+t_nz_n\in Q_n,
\quad
v_n\in\mathcal V_k,\quad 0\leq t_n\leq R_n,
\end{equation*}
be such that
\begin{equation*}
\mathcal{I}(u_n)=\max_{u\in Q_n}\mathcal{I}(u).
\end{equation*}
Then $(v_n)_{n\in\mathbb N}$ is bounded in $\mathcal V_k$ and
$(t_n)_{n\in\mathbb N}$ is bounded in $\mathbb R$.
\end{lemma}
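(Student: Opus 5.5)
The plan is to work only with the value of the maximum and the coercivity of $\mathcal{I}$ on finite-dimensional pieces, made uniform in $n$. Since $u_n$ maximizes $\mathcal{I}$ over $Q_n$ and this maximum is at least $2\pi/\alpha_0>0$, we have the lower bound
\[
\frac{1}{2}\Vert u_n\Vert_B^2 \;\geq\; \mathcal{I}(u_n) + \int_\Omega F(x,u_n)\,\mathrm{d}x \;\geq\; \frac{2\pi}{\alpha_0}.
\]
The strategy is to combine this with an upper bound showing that $\mathcal{I}(u)$ becomes very negative as soon as $\Vert v\Vert_B$ or $t$ is large, with constants independent of $n$. The $B$-orthogonality of $\mathcal V_k$ and $\mathcal W_k$ gives $\Vert u_n\Vert_B^2=\Vert v_n\Vert_B^2+t_n^2$, so it suffices to show that $\Vert u_n\Vert_B$ is bounded.

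First I would use Lemma~\ref{eq:superquadratic-F-infty} with a fixed $\theta>2$. This gives
\[
\mathcal{I}(u_n)\leq \frac{1}{2}\bigl(\Vert v_n\Vert_B^2+t_n^2\bigr) - C_{1,\theta}\Vert v_n+t_nz_n\Vert_{L^\theta(\Omega)}^\theta + C_{2,\theta}\vert\Omega\vert .
\]
The difficulty is that $z_n$ changes with $n$ and concentrates. Hence no norm equivalence on the $n$-dependent spaces $Z_n=\mathcal V_k\oplus\mathbb R z_n$ is uniform, and Lemma~\ref{lem:finite-dimensional-negativity} cannot simply be quoted. This uniformity is the main obstacle.

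To handle it, I would lower bound $\Vert v+tz_n\Vert_{L^\theta}$ using two facts. The first is that $P_kG_n\to 0$ and $G_n\to0$ in $L^2$, so that $\Vert z_n\Vert_{L^2}\to0$ (see Lemma~\ref{lem:projected-moser-B}). The second is that $\Vert\cdot\Vert_{L^\theta}\geq c\Vert\cdot\Vert_{L^1}$ on the bounded domain, together with the inequality $\Vert \cdot\Vert_{L^\theta}\ge c\Vert\cdot\Vert_{L^2}$ when $\theta\geq2$. Then
\[
\Vert v+tz_n\Vert_{L^\theta}\ \geq\ c\,\Vert v+tz_n\Vert_{L^2}\ \geq\ c\bigl(\Vert v\Vert_{L^2}-t\Vert z_n\Vert_{L^2}\bigr)\ \geq\ c'\Vert v\Vert_B - o_n(1)\,t .
\]
This controls the $v$-direction, up to a small loss in $t$. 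For the $t$-direction I would use the pointwise lower bound \eqref{eq:projected-moser-pointwise}, namely $z_n^2\gtrsim \log n/(2\pi)$ on $B(x_0,d/n)$, and the fact that $\Vert v\Vert_{L^\infty}\le C\Vert v\Vert_B$ on $\mathcal V_k$ (the $\varphi_j$ are bounded). With these, when $t\sqrt{\log n}\gg \Vert v\Vert_B$ we get $v+tz_n\geq \tfrac{t}{2}\sqrt{\log n/2\pi}$ on that ball. Applying \ref{f4} (or \ref{f1}, which gives $F(x,s)\geq e^{(\alpha_0-\epsilon)s^2}$ for large $s$) then yields
\[
\int_\Omega F(x,u)\,\mathrm{d}x\ \geq\ \pi d^2n^{-2}\exp\bigl(c\,t^2\log n\bigr).
\]
This beats $t^2$ once $t$ exceeds a fixed constant $T_0$, uniformly in $n$ large. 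In the remaining regime $\Vert v\Vert_B\gtrsim t\sqrt{\log n}\geq t$, the $L^2$ bound above gives $\int F\geq C\Vert v\Vert_B^\theta - C$, and this dominates $\Vert v\Vert_B^2+t^2\le 2\Vert v\Vert_B^2$.

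Combining the two regimes, there is a constant $M$, independent of $n$, such that $\mathcal{I}(v+tz_n)<2\pi/\alpha_0$ whenever $\Vert v\Vert_B^2+t^2>M$. Since $\mathcal{I}(u_n)\geq2\pi/\alpha_0$, this forces $\Vert v_n\Vert_B^2+t_n^2\leq M$, which gives both conclusions. If the case split proves clumsy, an alternative is to argue by contradiction. Assume $\Vert u_n\Vert_B\to\infty$ and normalize $w_n=u_n/\Vert u_n\Vert_B=\hat v_n+\hat t_nz_n$. Then $\hat v_n\to\hat v$ in the finite-dimensional space $\mathcal V_k$ and $\hat t_n\to\hat t$, and we get a contradiction from $\mathcal{I}(u_n)/\Vert u_n\Vert_B^2\to-\infty$. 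This follows from Fatou's lemma and the superquadratic growth of $F$ if $\hat v\neq0$, and from the concentration estimate if $\hat v=0$, in which case $\hat t=1$.
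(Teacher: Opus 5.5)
Your argument is correct, but it takes a genuinely different route from the paper.

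\textbf{The paper's route.} The paper works with the derivative. Since $\mathcal I\le 0$ on $\partial Q_n$ and $\mathcal I(u_n)\ge 2\pi/\alpha_0>0$, the maximizer lies in the relative interior of $Q_n$ in $Z_n=\mathcal V_k\oplus\mathbb R z_n$. Hence $\Vert u_n\Vert_B^2=\int_\Omega f(x,u_n)u_n\,\mathrm{d}x$. From \ref{f8} the paper also gets $t_n^2\ge 4\pi/\alpha_0$. It then splits according to the behaviour of $t_n/\Vert v_n\Vert_B$:
\begin{itemize}
\item If this ratio stays bounded below, concentration on $B(x_0,d/n)$ together with \ref{f4}, tested against the identity, bounds $t_n$.
\item If it tends to $0$, then $\Vert v_n\Vert_B\to\infty$ and $v_n/\Vert v_n\Vert_B\to w\neq0$. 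Fatou's lemma with $f(x,s)s\ge e^{\alpha s^2}$ then contradicts the identity.
\end{itemize}

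\textbf{Your route and what each buys.} You never differentiate. Instead you prove a coercivity estimate for $\mathcal I$ on the whole family $Z_n$, uniform in $n$. In the $\mathcal V_k$-direction you use $F\ge C_{1,\theta}|s|^\theta-C_{2,\theta}$; in the $z_n$-direction you use an exponential lower bound on $F$ over the concentration ball. This avoids the interior-point argument and \ref{f8}. It also gives slightly more: the maximizers lie in a fixed ball, and $R_n$ could even be chosen independent of $n$. The paper's route, in turn, produces the identity $\Vert u_n\Vert_B^2=\int_\Omega f(x,u_n)u_n\,\mathrm{d}x$ and the bound $t_n^2\ge 4\pi/\alpha_0$, both of which are reused directly in Lemma~\ref{eq:lemadonivelsaddle}.

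\textbf{Bookkeeping points.}
\begin{itemize}
\item As written, your two regimes miss the region $t<T_0$, $\Vert v\Vert_B\ll t\sqrt{\log n}$, where $\Vert v\Vert_B$ can still be of order $\sqrt{\log n}$. This is harmless: there $t$ is bounded, and your $\mathcal V_k$-estimate gives $\mathcal I\le\frac12\Vert v\Vert_B^2+\frac12T_0^2-C\Vert v\Vert_B^\theta+C$.
\item In the concentration regime the term to beat is $\frac12(\Vert v\Vert_B^2+t^2)\lesssim t^2\log n$, not $t^2$. The bound $\pi d^2 n^{ct^2-2}$ still dominates it for $t\ge T_0$ and $n$ large.
\item A cleaner version uses the $L^2$-orthogonality of $\mathcal V_k$ and $\mathcal W_k$, which gives $\Vert v+tz_n\Vert_{L^2}\ge\Vert v\Vert_{L^2}$ with no loss. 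You can then split simply at $\Vert v\Vert_B\ge t$ versus $\Vert v\Vert_B<t$.
\end{itemize}
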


\begin{proof}
Since $\mathcal{I}\leq0$ on $\partial Q_n$ and
\begin{equation*}
\mathcal{I}(u_n)\geq\frac{2\pi}{\alpha_0}>0,
\end{equation*}
the point $u_n$ belongs to the relative interior of $Q_n$ in $Z_{n}$. Therefore,
\begin{equation*}
\mathcal{I}'(u_n)\varphi=0,
\quad
\forall\,\varphi\in Z_n
\coloneqq 
\mathcal V_k\oplus\mathbb Rz_n.
\end{equation*}
Taking $\varphi=u_n$, we obtain
\begin{equation}\label{eq:critical-identity-maximizers}
\Vert u_n\Vert_B^2
=
\int_\Omega f(x,u_n)u_n\,\mathrm{d}x.
\end{equation}
Since $v_n\in\mathcal V_k$, $z_n\in\mathcal W_k$ and
$\Vert z_n\Vert_B=1$, the $B$-orthogonality between \(\mathcal{V}_k\) and \(\mathcal{W}_k\) gives
\begin{equation*}
\Vert u_n\Vert_B^2
=
\Vert v_n\Vert_B^2+t_n^2.
\end{equation*}
Moreover, since the $B$-orthogonal spectral decomposition also coincides with
the $L^2(\Omega)$-orthogonal decomposition, we have
\begin{equation*}
\Vert u_n\Vert_{L^2(\Omega)}^2
=
\Vert v_n\Vert_{L^2(\Omega)}^2
+
t_n^2\Vert z_n\Vert_{L^2(\Omega)}^2.
\end{equation*}
Using \ref{f8} and the spectral inequality on $\mathcal V_k$ (Lemma \ref{eq:spectralinequalities} $(i)$), we obtain
\begin{align*}
\mathcal{I}(u_n)
&\leq
\frac12\Vert v_n\Vert_B^2
+
\frac12t_n^2
-
\frac{\hat{\lambda}_k+\sigma}{2}
\left(
\Vert v_n\Vert_{L^2(\Omega)}^2
+
t_n^2\Vert z_n\Vert_{L^2(\Omega)}^2
\right)\\
&\leq
\frac12t_n^2
-
\frac{\sigma}{2}\Vert v_n\Vert_{L^2(\Omega)}^2
-
\frac{\hat{\lambda}_k+\sigma}{2}
t_n^2\Vert z_n\Vert_{L^2(\Omega)}^2.
\end{align*}
Since \(\mathcal{I}(u_n)\geq2\pi/\alpha_0\), it follows that
\begin{equation}\label{eq:tn-lower-control-vn}
t_n^2
\geq
\frac{4\pi}{\alpha_0}
+
\sigma\Vert v_n\Vert_{L^2(\Omega)}^2
+
(\hat{\lambda}_k+\sigma)
t_n^2\Vert z_n\Vert_{L^2(\Omega)}^2.
\end{equation}
In particular,
\begin{equation}\label{eq:tn-away-zero}
t_n^2\geq\frac{4\pi}{\alpha_0}.
\end{equation}

Set
$
r_n\coloneqq \Vert v_n\Vert_B.
$
If $r_n=0$ for infinitely many $n$, we pass to that subsequence and the
argument below for the first case applies directly. We may therefore assume
that $r_n>0$. Passing to a subsequence, one of the following alternatives
holds:
\begin{itemize}
    \item[$(i)$] $t_n/r_n\geq c_0>0$;\vspace{0.1cm}
    \item[$(ii)$] $t_n/r_n\rightarrow0,$ as $n \rightarrow +\infty$.
\end{itemize}

If $(i)$ holds, then $r_n\leq t_n/c_0$, and consequently,
\begin{equation}\label{eq:un-controlled-by-tn}
\Vert u_n\Vert_B^2
=
r_n^2+t_n^2
\leq
C_0t_n^2,
\end{equation}
for some constant $C_0>0$ independent of $n$. Since $\mathcal V_k$ is finite-dimensional, there exists $C_1>0$ such that
\begin{equation*}
\Vert v\Vert_{L^\infty(\Omega)}
\leq
C_1\Vert v\Vert_B,
\quad
\forall\,v\in\mathcal V_k.
\end{equation*}
Thus,
\begin{equation*}
\frac{\Vert v_n\Vert_{L^\infty(\Omega)}}{t_n}
\leq
\frac{C_1}{c_0}.
\end{equation*}
On the other hand, the pointwise estimate for the projected Moser functions implies that
\[
\inf_{x\in B(x_0,d/n)} z_n(x)\longrightarrow+\infty.
\]
Consequently,
\[
\inf_{x\in B(x_0,d/n)}
\left(
z_n(x)-\frac{|v_n(x)|}{t_n}
\right)
\longrightarrow+\infty.
\]
Hence, for every fixed $\varepsilon\in(0,1)$ and for all sufficiently large $n$
\begin{equation*}
u_n(x)
=
v_n(x)+t_nz_n(x)
\geq
(1-\varepsilon)t_nz_n(x)>0,
\quad
x\in B(x_0,d/n).
\end{equation*}
Fix \(M>0\). By \ref{f4}, there exists \(s_M>0\) such that
\[
f(x,s)s\geq M \exp({\alpha_0s^2}),
\quad x\in\overline\Omega,\ s\geq s_M.
\]
Since $\inf \limits_{x\in B(x_0,d/n)}u_n(x)\longrightarrow+\infty,$ for all sufficiently
large \(n\) we have
\[
f(x,u_n)u_n\geq M \exp({\alpha_0u_n^2})
\quad\text{on }B(x_0,d/n).
\]

It follows from
\eqref{eq:critical-identity-maximizers},
\eqref{eq:un-controlled-by-tn} and the pointwise estimate for $z_n$ that there
exist constants $A_1,A_2,A_3>0$, independent of $n$, such that
\begin{equation}\label{eq:exponential-control-tn}
A_1t_n^2
\geq
A_2
\exp\left\{
\left[
\frac{\alpha_0(1-\varepsilon)^2t_n^2}{2\pi}-2
\right]\log n
-
A_3t_n^2
\right\}.
\end{equation}
The preceding inequality implies that $(t_n)_{n \in \mathbb{N}}$ is bounded. Indeed, if, up to a
subsequence, $t_n\to+\infty$, then the right-hand side of
\eqref{eq:exponential-control-tn} grows faster than the quadratic term on the
left-hand side, which is impossible. Since
$
r_n\leq t_n/c_0,
$
the sequence $(v_n)_{n \in \mathbb{N}}$ is also bounded.

It remains to exclude the second alternative. If $(ii)$ holds, then in view of \eqref{eq:tn-away-zero}, we have that
$
r_n\rightarrow+\infty, 
$
as $n \rightarrow +\infty$.
Define
$
w_n \coloneqq v_n/r_n.
$
Since $\mathcal V_k$ is finite-dimensional, up to a subsequence,
$
w_n\rightarrow w,
$
strongly in $\mathcal V_k$, with $\Vert w\Vert_B=1$.
Moreover, since \(G_n\to0\) a.e. in \(\Omega\), \(P_kG_n\to0\) uniformly in
\(\Omega\), and \(\Vert \widetilde z_n \Vert_B\to1\), we have
\[
z_n=\frac{G_n-P_kG_n}{\Vert \widetilde z_n \Vert_B}\to0
\quad\text{a.e. in }\Omega.
\] 
Since $z_n\to0$ almost everywhere in $\Omega$,
\begin{equation*}
\frac{u_n}{r_n}
=
w_n+\frac{t_n}{r_n}z_n
\rightarrow w,
\quad\text{a.e. in }\Omega.
\end{equation*}
Fix $\alpha\in(0,\alpha_0)$. By \ref{f1} and \ref{f5}, there exists $S>0$
such that
\begin{equation*}
f(x,s)s
\geq
\exp(\alpha s^2),
\quad
x\in\overline{\Omega},\quad \vert s\vert \geq S.
\end{equation*}
Since $t_n/r_n\to0$, we also have
\begin{equation*}
\Vert u_n\Vert_B^2
=
r_n^2+t_n^2
\leq
2r_n^2,
\end{equation*}
for $n\in\mathbb{N}$ sufficiently large. Therefore,
\eqref{eq:critical-identity-maximizers} yields
\begin{equation*}
1
\geq
\frac{1}{2r_n^2}
\int_{\{\vert u_n\vert \geq S\}}
\exp(\alpha u_n^2)\,\mathrm{d}x.
\end{equation*}
Since $w\neq 0$, the set
\[
\{x\in\Omega:\ \vert w(x) \vert >0\}
\]
has positive measure. Moreover,
\[
\{x\in\Omega:\ \vert w(x) \vert >0\}
=
\bigcup_{m=1}^{+\infty}
\left\{x\in\Omega:\ \vert w(x) \vert \geq\frac{1}{m}\right\},
\]
there exists $m_0\in\mathbb{N}$ such that the measurable set
\[
E \coloneqq \left\{x\in\Omega:\ \vert w(x) \vert \geq\frac{1}{m_0}\right\}
\]
has positive measure. Setting $\delta_{0} \coloneqq 1/m_0$, we have
\[
|w(x)|\geq\delta_{0}>0,\qquad x\in E.
\]
Hence, for almost every $x\in E$, we have
\begin{equation*}
\vert u_n(x)\vert 
\geq
\frac{\delta_{0}}{2}r_n
\rightarrow+\infty, \quad \mbox{as } n \rightarrow +\infty.
\end{equation*}
Consequently,
\begin{equation*}
\frac{1}{2r_n^2}\exp({\alpha u_n(x)^2})\chi_{\{\vert u_n \vert \ge S\}}(x)
\to+\infty
\quad\text{for a.e. }x\in E.    
\end{equation*}
Fatou's lemma then implies
\begin{equation*}
\frac{1}{2r_n^2}
\int_{\{\vert u_n \vert \ge S\}}\exp({\alpha u_n^2})\, \mathrm{d} x \to+\infty,    
\end{equation*}
which contradicts the preceding estimate.

Hence, the second alternative cannot occur. Therefore both $(t_n)_{n \in \mathbb{N}}$ and
$(v_n)_{n \in \mathbb{N}}$ are bounded.
\end{proof}
\begin{remark}\label{rmk:CB-beta}
\emph{(On the constants $C_B$ and $\beta$.)}
Let $B(x_0,d)\subset\Omega$ be the ball fixed in \eqref{eq:mosersequenced}, let
$C_{B,0}>0$ be the constant provided by Lemma~\ref{lem:projected-moser-B}, and let
$K_k>0$ be such that
\[
\Vert v \Vert_{L^{\infty}(\Omega)}\leq K_k \Vert v \Vert_{L^{2}(\Omega)},
\qquad v\in V_k,
\]
which exists because $\dim V_k<+\infty$. Set
\begin{equation}\label{eq:def-CB}
C_B:=C_{B,0}+\frac{8\pi^{2}K_k^{2}}{\sigma}.
\end{equation}

\begin{enumerate}
\item[(i)] The constant $C_B$ is determined by the linking construction and by the
assumptions already in force. Indeed, $C_{B,0}$ and $K_k$ depend only on $k$, on
$\Omega$, on the eigenfunctions $\varphi_1,\dots,\varphi_k$ and on the choice of
$B(x_0,d)$, while $\sigma$ is the constant appearing in \ref{f8}. In particular, $C_B$
does not depend on the parameter $\beta$ introduced below.

\item[(ii)] Assumption \ref{f4} enters the argument only through the following
consequence: for every $\beta>0$ there exists $s_\beta>0$ such that
\begin{equation}\label{eq:f4-beta}
f(x,s)s\geq\beta\exp(\alpha_0 s^{2}),
\qquad x\in\Omega,\ s\geq s_\beta.
\end{equation}
Since \ref{f4} is stated as a limit equal to $+\infty$, the constant $\beta$ in
\eqref{eq:f4-beta} may be taken arbitrarily large. Accordingly, throughout the proof
of Lemma~\ref{eq:lemadonivelsaddle} we fix once and for all
\begin{equation}\label{eq:choice-beta}
\beta>\frac{4\,\exp({C_B/\pi})}{\alpha_0 d^{2}},
\end{equation}
together with the corresponding $s_\beta$. This choice is legitimate by~(i): the
right-hand side of \eqref{eq:choice-beta} is already determined at the moment
$\beta$ is chosen.

\item[(iii)] No assumption of the form $sf(x,s)\exp(-\alpha_0 s^{2})\geq\beta_0$
for some prescribed $\beta_0$ is imposed on $f$. Such an assumption would be
unsatisfactory in the present setting, since the admissible threshold involves
$C_B$, a constant produced by the projected Moser estimate of
Lemma~\ref{lem:projected-moser-B} and by the finite-dimensional component of the
linking set, and therefore not available at the level of the statement of the
hypotheses. Requiring the full limit \ref{f4} removes this dependence. 
\end{enumerate}
\end{remark}

\begin{lemma} \label{eq:lemadonivelsaddle}
Assume that \ref{f1}, \ref{f3}-\ref{f5}, \ref{f7}-\ref{f9} hold and $\delta \leq \lambda_{1} + \gamma$. Then, for some $n\in\mathbb{N}$, one has
\begin{equation*}
\max_{u\in Q_n} \mathcal{I}(u)
<
\frac{2\pi}{\alpha_0}.
\end{equation*}
Consequently,
\begin{equation*}
0<\hat{c}_n<\frac{2\pi}{\alpha_0}.
\end{equation*}
\end{lemma}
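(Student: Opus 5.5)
We argue by contradiction. Suppose that $\max_{Q_n}\mathcal{I}\geq 2\pi/\alpha_0$ for all large $n$, and pick maximizers $u_n=v_n+t_nz_n\in Q_n$. By Lemma~\ref{lem:bounded-maximizing-parameters}, the sequences $(t_n)$ and $(v_n)$ are bounded. Its proof also gives two facts we will use: the identity $\Vert u_n\Vert_B^2=\int_\Omega f(x,u_n)u_n\,\mathrm{d}x$, and the lower bound \eqref{eq:tn-lower-control-vn}, namely
\[
t_n^2\geq \frac{4\pi}{\alpha_0}+\sigma\Vert v_n\Vert_{L^2(\Omega)}^2 .
\]
In particular $t_n^2\geq 4\pi/\alpha_0$. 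Passing to a subsequence, we may assume $t_n\to t_0$ and $v_n\to v_0$ in the finite-dimensional space $\mathcal V_k$.

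\textbf{Step 1: identify $t_0$.} Restrict the integral in $\Vert v_n\Vert_B^2+t_n^2=\int_\Omega f(x,u_n)u_n$ to the ball $B(x_0,d/n)$. On this ball, Lemma~\ref{lem:projected-moser-B} gives $z_n\geq \sqrt{\log n/(2\pi)}-O(1)$, while $\vert v_n\vert\leq K_k\Vert v_n\Vert_{L^2(\Omega)}$ stays bounded. Hence $u_n\to+\infty$ uniformly there, and \eqref{eq:f4-beta} applies:
\[
C\geq \beta\int_{B(x_0,d/n)}\exp(\alpha_0u_n^2)\,\mathrm{d}x\gtrsim n^{\alpha_0t_n^2/(2\pi)-2+o(1)} .
\]
If $t_0^2>4\pi/\alpha_0$, the right-hand side blows up. Therefore $t_0^2=4\pi/\alpha_0$. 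Combined with \eqref{eq:tn-lower-control-vn}, this forces $\Vert v_n\Vert_{L^2(\Omega)}\to0$, so $v_0=0$.

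\textbf{Step 2: sharp exponent bookkeeping.} Write
\[
u_n^2\geq t_n^2z_n^2-2t_n z_n\vert v_n\vert ,
\]
dropping the positive term $v_n^2$. By Young's inequality,
\[
2t_n z_n\vert v_n\vert\leq \varepsilon\, t_n^2z_n^2\cdot\frac{1}{\log n}\ \text{(in effect)}\;+\;\ldots
\]
More precisely, we use the pointwise bound $\vert v_n\vert\leq K_k\Vert v_n\Vert_{L^2(\Omega)}$. This gives
\[
\alpha_0 t_n^2z_n^2-2\alpha_0t_nz_nK_k\Vert v_n\Vert_{L^2(\Omega)} .
\]
Since $z_n\sim\sqrt{\log n/(2\pi)}$, the cross term is of size $\Vert v_n\Vert_{L^2(\Omega)}\sqrt{\log n}$. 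To control it, use \eqref{eq:tn-lower-control-vn} in the form
\[
\alpha_0t_n^2\geq 4\pi+\alpha_0\sigma\Vert v_n\Vert_{L^2(\Omega)}^2 .
\]
Inserting the expansion of $z_n^2$ from Lemma~\ref{lem:projected-moser-B}, the exponent on the ball is at least
\[
2\log n-\frac{C_{B,0}}{\pi}+\frac{\alpha_0\sigma}{2\pi}\Vert v_n\Vert_{L^2(\Omega)}^2\log n-c\,K_k\Vert v_n\Vert_{L^2(\Omega)}\sqrt{\log n}+o(1).
\]
Set $y=\Vert v_n\Vert_{L^2(\Omega)}\sqrt{\log n}$. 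The $y$-dependent part is the quadratic $Ay^2-cK_ky$, and minimizing it over $y$ gives a lower bound $-c'K_k^2/\sigma$. This is exactly the role of the extra term $8\pi^2K_k^2/\sigma$ in $C_B$ from \eqref{eq:def-CB}. The outcome is that the exponent is at least $2\log n-C_B/\pi+o(1)$ uniformly on $B(x_0,d/n)$.

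\textbf{Step 3: conclude.} From Step 2,
\[
\Vert u_n\Vert_B^2\geq \beta\,\pi d^2 n^{-2}\,n^{2}e^{-C_B/\pi+o(1)} .
\]
Letting $n\to\infty$, the left-hand side tends to $t_0^2=4\pi/\alpha_0$, since $v_n\to0$. Thus
\[
\frac{4\pi}{\alpha_0}\geq \beta\pi d^2e^{-C_B/\pi},
\]
which contradicts the choice \eqref{eq:choice-beta}. Hence $\max_{Q_n}\mathcal{I}<2\pi/\alpha_0$ for some $n$. Since the identity is admissible in $\Gamma_n$, we get $\hat c_n\leq\max_{Q_n}\mathcal{I}$, and $\hat c_n\geq a>0$ by Lemma~\ref{linking-geometry}.

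The main obstacle is Step 2. The cross term $t_nz_nv_n$ is of order $\Vert v_n\Vert_{L^2(\Omega)}\sqrt{\log n}$, which need not be small. It has to be absorbed by the gain $\sigma\Vert v_n\Vert_{L^2(\Omega)}^2\log n$ coming from \ref{f8} via \eqref{eq:tn-lower-control-vn}, and the constants must be tracked so that they match $C_B$. Hypothesis \ref{f9} is used only to ensure that $\mathcal{I}$ is well defined.
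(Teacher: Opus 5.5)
Your proof is correct and follows essentially the same route as the paper. Both argue by contradiction with maximizers $u_n=v_n+t_nz_n$, get boundedness from Lemma~\ref{lem:bounded-maximizing-parameters}, identify $t_0^2=4\pi/\alpha_0$ and $v_0=0$, absorb the cross term $2t_nz_nv_n$ by the gain $\sigma\Vert v_n\Vert_{L^2(\Omega)}^2z_n^2$ from \ref{f8} through a completed square, and conclude by comparing with the choice \eqref{eq:choice-beta} of $\beta$. The only difference is cosmetic: the paper completes the square directly in $\vert z_n(x)\vert$, which gives $u_n^2\geq \frac{4\pi}{\alpha_0}z_n^2-t_n^2K_k^2/\sigma$ pointwise without needing an upper bound on $z_n$ over the ball. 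Your variable $y=\Vert v_n\Vert_{L^2(\Omega)}\sqrt{\log n}$ gives the same loss $\alpha_0t_n^2K_k^2/\sigma\to 4\pi K_k^2/\sigma\leq 8\pi K_k^2/\sigma$ once $c'$ is actually computed, and the garbled ``Young's inequality'' line in Step~2 should simply be deleted.
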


\begin{proof}
Recall the notation and constants introduced in Remark \ref{rmk:CB-beta}. Set
\[
T^2\coloneqq \frac{4\pi}{\alpha_0}.
\]
Suppose, by contradiction, that
\begin{equation}
\max_{u\in Q_n}\mathcal{I}(u)\geq\frac{2\pi}{\alpha_0}
=\frac{T^2}{2},
\label{eq:critical-linking-contradiction}
\end{equation}
for every sufficiently large $n\in\mathbb{N}$. Since $Q_n$ is compact in the finite-dimensional space
$
Z_n\coloneqq \mathcal V_k\oplus\mathbb Rz_n,
$
there exists
$
u_n=v_n+t_nz_n\in Q_n,
$
with
$
v_n\in\mathcal V_k
$
and
$
0\leq t_n\leq R_n,
$
such that
\begin{equation*}
\mathcal{I}(u_n)=\max_{u\in Q_n}\mathcal{I}(u).
\end{equation*}
By the linking geometry, $\mathcal{I}\leq0$ on $\partial Q_n$.
Therefore, in view of
\eqref{eq:critical-linking-contradiction}, $u_n$ belongs to
the relative interior of $Q_n$. Hence,
\begin{equation*}
\mathcal{I}'(u_n)\varphi=0,
\quad \forall\,\varphi\in Z_n.
\end{equation*}
Taking $\varphi=u_n$, we obtain
\begin{equation}
\Vert u_n\Vert_B^2
=
\int_\Omega f(x,u_n)u_n\,\mathrm{d}x.
\label{eq:maximizer-Nehari-identity}
\end{equation}
Since $v_n\in\mathcal V_k$, $z_n\in\mathcal W_k$, and
$\Vert z_n\Vert_B=1$, the $B$-orthogonality gives
\begin{equation*}
\Vert u_n\Vert_B^2=\Vert v_n\Vert_B^2+t_n^2.
\end{equation*}
Moreover, because the $B$-orthogonal spectral decomposition
also coincides with the $L^2(\Omega)$-orthogonal
decomposition,
\begin{equation*}
\Vert u_n\Vert_{L^{2}(\Omega)}^{2}
=
\Vert v_n\Vert_{L^{2}(\Omega)}^2+t_n^2\Vert z_n\Vert_{L^{2}(\Omega)}^2.
\end{equation*}
By \ref{f8} and the spectral inequality on
$\mathcal V_k$ (Lemma \ref{eq:spectralinequalities} $(i)$), we have
\begin{align*}
\mathcal{I}(u_n)
&\leq
\frac12\bigl(\Vert v_n\Vert_B^2+t_n^2\bigr)
-\frac{\hat\lambda_k+\sigma}{2}
\left(
\Vert v_n\Vert_{L^{2}(\Omega)}^2+t_n^2\Vert z_n\Vert_{L^{2}(\Omega)}^2
\right)
\\
&\leq
\frac{t_n^2}{2}
-\frac{\sigma}{2}\Vert v_n\Vert_{L^{2}(\Omega)}^2
-\frac{\hat\lambda_k+\sigma}{2}
t_n^2\Vert z_n\Vert_{L^{2}(\Omega)}^2.
\end{align*}
Combining this inequality with
\eqref{eq:critical-linking-contradiction}, we obtain
\begin{equation}
t_n^2
\geq
T^2+\sigma\Vert v_n\Vert_{L^{2}(\Omega)}^2
+(\hat\lambda_k+\sigma)t_n^2\Vert z_n\Vert_{L^{2}(\Omega)}^2.
\label{eq:critical-energy-gap}
\end{equation}
In particular,
\begin{equation}
t_n^2\geq T^2=\frac{4\pi}{\alpha_0}.
\label{eq:tn-critical-lower-bound}
\end{equation}
By Lemma \ref{lem:bounded-maximizing-parameters}, the sequences $(t_n)_{n \in \mathbb{N}}$ and $(v_n)_{n \in \mathbb{N}}$
are bounded. Thus, up to a subsequence,
$
t_n\rightarrow t_0\geq0
$
and
$v_n\rightarrow v_0$ strongly in $\mathcal V_k$, as $n \rightarrow +\infty$. Since $z_n\rightharpoonup0$ weakly in $H_0^1(\Omega)$ and the
embedding
$
H_0^1(\Omega)\hookrightarrow L^2(\Omega)
$
is compact, we also have
$
\Vert z_n\Vert_{L^{2}(\Omega)}\rightarrow0,
$
as $n \rightarrow +\infty$. Passing to the limit in \eqref{eq:critical-energy-gap}, we
obtain
\begin{equation}
t_0^2\geq T^2+\sigma\Vert v_0\Vert_{L^{2}(\Omega)}^2.
\label{eq:limit-critical-gap}
\end{equation}

We claim that
\begin{equation}
t_0^2=T^2
\quad\text{and}\quad
v_0=0.
\label{eq:critical-limit-identification}
\end{equation}
Suppose first that $t_0^2>T^2$. Choose $\vartheta \in(0,1)$ such
that
$
\alpha_0\vartheta^2t_0^2>4\pi.
$
Since $(v_n)_{n \in \mathbb{N}}$ is bounded in the finite-dimensional space
$\mathcal V_k$, it is bounded in $L^\infty(\Omega)$.
On the other hand,
\[
\inf_{x\in B(x_0,d/n)} z_n(x)\to+\infty.
\] 
Hence, for all sufficiently
large $n\in\mathbb{N}$,
\begin{equation*}
u_n(x)=v_n(x)+t_nz_n(x)
\geq \vartheta t_nz_n(x)>0,
\quad x\in B(x_0,d/n).
\end{equation*}
Fix $\beta > \frac{4\, \exp({C_B/\pi})}{\alpha_0 d^{2}}$, as in \eqref{eq:choice-beta}. By \ref{f4}, there exists
$s_\beta>0$ such that
\begin{equation}
f(x,s)s
\geq
\beta\exp(\alpha_0s^2),
\quad
x\in\overline\Omega,\quad s\geq s_\beta.
\label{eq:critical-asymptotic-lower-bound}
\end{equation}
Since
\[
\inf_{x\in B(x_0,d/n)} u_n(x)\to+\infty,
\] estimate
\eqref{eq:critical-asymptotic-lower-bound} applies there for
all sufficiently large $n$. Furthermore, by \ref{f5},
$f(x,s)s\geq0$ for every $s\in\mathbb R$. Consequently,
\begin{align*}
\int_\Omega f(x,u_n)u_n\,\mathrm{d}x
&\geq
\beta
\int_{B(x_0,d/n)}
\exp(\alpha_0u_n^2)\,\mathrm{d}x
\\
&\geq
\beta
\int_{B(x_0,d/n)}
\exp(\alpha_0\vartheta^2t_n^2z_n^2)\,\mathrm{d}x.
\end{align*}
Since $\alpha_0\vartheta^2t_n^2>4\pi$ for $n\in\mathbb{N}$ large, the pointwise estimate from Lemma \ref{lem:projected-moser-B}
implies that
\begin{equation*}
\int_{B(x_0,d/n)}
\exp(\alpha_0\vartheta^2t_n^2z_n^2)\,\mathrm{d}x \rightarrow + \infty, \quad \mbox{as } n \rightarrow +\infty.
\end{equation*}
Therefore,
\begin{equation*}
\int_\Omega f(x,u_n)u_n\,\mathrm{d}x\rightarrow+\infty, \quad \mbox{as } n \rightarrow +\infty.
\end{equation*}
This contradicts \eqref{eq:maximizer-Nehari-identity}, since
$(u_n)_{n \in \mathbb{N}}$ is bounded in $H_0^1(\Omega)$. Hence,
$
t_0^2\leq T^2.
$
Together with \eqref{eq:limit-critical-gap}, this gives
$
t_0^2=T^2
$
and
$
\Vert v_0\Vert_{L^{2}(\Omega)}=0.
$
Therefore $v_0=0$, proving
\eqref{eq:critical-limit-identification}.

We now establish the sharp concentration estimate for
$u_n=v_n+t_nz_n$. Since $\mathcal V_k$ is finite-dimensional,
there exists $K_k>0$ such that
\begin{equation}
\Vert v\Vert_{L^\infty(\Omega)}
\leq K_k\Vert v\Vert_{L^{2}(\Omega)},
\quad v\in\mathcal V_k.
\label{eq:Vk-L2-Linfty}
\end{equation}
From \eqref{eq:critical-energy-gap}, we have
\begin{equation}
t_n^2-T^2\geq\sigma\Vert v_n\Vert_{L^{2}(\Omega)}^2.
\label{eq:tn-vn-gap}
\end{equation}
Therefore, for every $x\in B(x_0,d/n)$,
\begin{align*}
u_n^2(x)
&=
\bigl(v_n(x)+t_nz_n(x)\bigr)^2
\\
&=
T^2z_n^2(x)
+\bigl(t_n^2-T^2\bigr)z_n^2(x)
+2t_nz_n(x)v_n(x)
+v_n^2(x)
\\
&\geq
T^2z_n^2(x)
+\sigma\Vert v_n\Vert_{L^{2}(\Omega)}^2z_n^2(x)
-2t_nK_k\Vert v_n\Vert_{L^{2}(\Omega)}\vert z_n(x)\vert .
\end{align*}
Completing the square, we obtain
\begin{equation*}
\sigma\Vert v_n\Vert_{L^{2}(\Omega)}^2z_n^2(x)
-2t_nK_k\Vert v_n\Vert_{L^{2}(\Omega)}\vert z_n(x)\vert 
\geq
-\frac{t_n^2K_k^2}{\sigma},
\end{equation*}
which implies that
\begin{equation}
u_n^2(x)
\geq
T^2z_n^2(x)
-\frac{t_n^2K_k^2}{\sigma}.
\label{eq:un-projected-Moser-lower-bound}
\end{equation}
Since $t_n^2\to T^2$, we have $t_n^2\leq2T^2$ for $n\in\mathbb{N}$ large. Consequently,
\begin{equation}
\alpha_0u_n^2(x)
\geq
4\pi z_n^2(x)-\frac{8\pi K_k^2}{\sigma}.
\label{eq:un-exponential-lower-bound}
\end{equation}

Let $C_{B,0}>0$ be the constant obtained in the pointwise
estimate of Lemma \ref{lem:projected-moser-B}, namely,
\begin{equation*}
z_n^2(x)
\geq
\frac{\log n}{2\pi}
-\frac{C_{B,0}}{4\pi^2}
+o_n(1)
\end{equation*}
uniformly for $x\in B(x_0,d/n)$. By \eqref{eq:def-CB} we have that,
\begin{equation}
C_B
=
C_{B,0}
+\frac{8\pi^2K_k^2}{\sigma}.
\label{eq:definition-CB}
\end{equation}
Combining this estimate with
\eqref{eq:un-exponential-lower-bound}, we obtain,
for $x\in B(x_0,d/n)$,
\begin{equation*}
\alpha_0u_n^2(x)
\geq
2\log n-\frac{C_B}{\pi}+o_n(1).
\end{equation*}
Therefore,
\begin{align*}
\int_{B(x_0,d/n)}
\exp(\alpha_0u_n^2)\,\mathrm{d}x
&\geq
\exp\left(
2\log n-\frac{C_B}{\pi}+o_n(1)
\right)
\vert B(x_0,d/n)\vert 
\\
&=
\pi d^2
\exp\left(
-\frac{C_B}{\pi}+o_n(1)
\right).
\end{align*}
Taking the lower limit gives
\begin{equation}
\liminf_{n\to+\infty}
\int_{B(x_0,d/n)}
\exp(\alpha_0u_n^2)\,\mathrm{d}x
\geq
\pi d^2\exp\left(-\frac{C_B}{\pi}\right).
\label{eq:un-sharp-concentration}
\end{equation}
Using the lower growth estimate on \(B(x_0,d/n)\), the positivity from
\ref{f5} outside this ball, and \eqref{eq:critical-asymptotic-lower-bound}, we obtain
\begin{equation}
\liminf_{n\to+\infty}
\int_\Omega f(x,u_n)u_n\,\mathrm{d}x
\geq
\beta \pi d^2
\exp\left(-\frac{C_B}{\pi}\right).
\label{eq:nonlinear-sharp-lower-bound}
\end{equation}
On the other hand, by
\eqref{eq:maximizer-Nehari-identity},
\begin{equation*}
\int_\Omega f(x,u_n)u_n\,\mathrm{d}x
=
\Vert u_n\Vert_B^2
=
\Vert v_n\Vert_B^2+t_n^2.
\end{equation*}
Since $v_n\to0$ in $\mathcal V_k$ and $t_n^2\to T^2$, we
obtain
\begin{equation*}
\lim_{n\to+\infty}
\int_\Omega f(x,u_n)u_n\,\mathrm{d}x
=
T^2
=
\frac{4\pi}{\alpha_0}.
\end{equation*}
Combining this identity with
\eqref{eq:nonlinear-sharp-lower-bound}, we obtain
\begin{equation*}
\frac{4\pi}{\alpha_0}
\geq
\beta\pi d^2
\exp\left(-\frac{C_B}{\pi}\right).
\end{equation*}
Consequently,
\begin{equation*}
\beta
\leq
\frac{4\exp({C_B/\pi})}{\alpha_0d^2},
\end{equation*}
which contradicts the choice of $\beta$ in \eqref{eq:choice-beta}. Therefore, there exists
$n\in\mathbb N$ such that
\begin{equation*}
\max_{u\in Q_n}\mathcal{I}(u)<\frac{2\pi}{\alpha_0}.
\end{equation*}

Finally, since the identity map belongs to $\Gamma_n$, we have
\begin{equation*}
\hat{c}_n\leq\max_{u\in Q_n}\mathcal{I}(u),
\end{equation*}
which together with $\hat{c}_n\geq a>0$, yields
\begin{equation*}
0<\hat{c}_n<\frac{2\pi}{\alpha_0}.
\end{equation*}
This completes the proof.
\end{proof}

\subsection{Palais-Smale condition in the saddle point case}
We now establish the compactness result required to conclude the
saddle point argument. The boundedness of Palais-Smale sequences and
the convergence of the nonlinear terms have already been proved in
Lemmas~\ref{eq:limitacaoC} and~\ref{eq:limitesu0C}. Therefore, we only
record the growth estimate provided by the local assumption
\ref{f9}, prove the nontriviality of the weak limit, and verify the
strong convergence.

\begin{lemma}\label{lem:nontrivial-limit-saddle}
Assume that \ref{f1}, \ref{f3}, \ref{f5}, and \ref{f9}
hold. Let $(u_n)_{n\in\mathbb N}$ be a $(PS)_c$ sequence for
$\mathcal{I}$ with
\begin{equation*}
0<c<\frac{2\pi}{\alpha_0}.
\end{equation*}
If
$
u_n\rightharpoonup u_0,
$
weakly in $H_0^1(\Omega)$, then $u_0\neq0$.
\end{lemma}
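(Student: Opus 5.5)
We will argue by contradiction, following the scheme of Lemma~\ref{eq:lemadiferentezero}, with \ref{f9} replacing \ref{f2} in the small-$s$ regime. Suppose $u_0=0$. By Lemma~\ref{eq:limitacaoC} (which uses only \ref{f3} and \ref{f5}), the sequence $(u_n)_{n\in\mathbb N}$ is bounded. Hence, up to a subsequence, \eqref{eq:convergenciaweakly2205} holds with $u_0=0$. Lemma~\ref{eq:limitesu0C}(ii) (which uses \ref{f1}, \ref{f3}, \ref{f5}) then gives $\int_\Omega F(x,u_n)\,\mathrm{d}x\to 0$. Since $\mathcal I(u_n)\to c$, we get $\tfrac12\Vert u_n\Vert_B^2\to c<2\pi/\alpha_0$. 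Therefore $\alpha_0\Vert u_n\Vert_B^2<\beta<4\pi$ for some $\beta$ and all large $n$. As in Lemma~\ref{eq:lemadiferentezero}, we may choose $q>1$ close to $1$ and $\alpha>\alpha_0$ close to $\alpha_0$ so that $q\alpha\Vert u_n\Vert_B^2<\beta$ for $n$ large.

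The growth bound replacing \eqref{eq:TH3C} comes from combining \ref{f9} on $\{|s|\le\delta_1\}$ with \ref{f1} on $\{|s|\ge\delta_1\}$. On the second set, $|f(x,s)|\le C_\alpha\exp(\alpha s^2)\le C_\alpha\delta_1^{-1}|s|\exp(\alpha s^2)$. This yields
\[
|f(x,s)s|\le C_1 s^2+C_2 s^2\exp(\alpha s^2),\qquad (x,s)\in\overline\Omega\times\mathbb R .
\]
Any bound of the form $C_1s^2+C_2|s|^r\exp(\alpha s^2)$ with $r\ge1$ suffices.

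We then apply Hölder's inequality with exponents $q$ and $q'=q/(q-1)$, together with the Trudinger--Moser inequality in the $B$-norm (Lemma~\ref{eq:TMeditadaC}) applied to $u_n/\Vert u_n\Vert_B$:
\[
\int_\Omega |f(x,u_n)u_n|\,\mathrm{d}x\le C_1\Vert u_n\Vert_{L^2(\Omega)}^2+C_2\Vert u_n\Vert_{L^{2q'}(\Omega)}^2\Bigl(\int_\Omega \exp\bigl(q\alpha\Vert u_n\Vert_B^2 (u_n/\Vert u_n\Vert_B)^2\bigr)\mathrm{d}x\Bigr)^{1/q}.
\]
The integral on the right is bounded by $C|\Omega|$, uniformly in $n$. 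Since $u_n\to0$ in every $L^p(\Omega)$ with $p<\infty$, the right-hand side tends to $0$. Consequently,
\[
o_n(1)=\mathcal I'(u_n)u_n=\Vert u_n\Vert_B^2+o_n(1),
\]
so $\Vert u_n\Vert_B\to0$ and $c=\lim\mathcal I(u_n)=0$. This contradicts $c>0$, and therefore $u_0\neq0$.

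The only delicate point is the growth estimate. Unlike the ground-state case, \ref{f2} is not available here, and $f$ need not be $o(s)$ at the origin; \ref{f9} only gives $|f(x,s)|\le C|s|$ near $0$. This is harmless, because the quadratic term $C_1\Vert u_n\Vert_{L^2(\Omega)}^2$ vanishes by the compact embedding, so no smallness of $C_1$ is needed. We must also check that the exponential-part constant is uniform in $x$; this holds because \ref{f1} is uniform in $x\in\overline\Omega$. Finally, the set where $\Vert u_n\Vert_B=0$ is trivial to handle, since there the estimate holds directly.
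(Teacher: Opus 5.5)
Your proof is correct and follows the paper's own argument essentially step by step. You assume $u_0=0$ and use Lemma~\ref{eq:limitesu0C}(ii) to get $\frac12\Vert u_n\Vert_B^2\to c<2\pi/\alpha_0$; you then combine H\"older's inequality with the $B$-norm Trudinger--Moser inequality to show $\int_\Omega f(x,u_n)u_n\,\mathrm{d}x\to0$, which forces $c=0$.

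The only difference is cosmetic. You use the growth bound $C_1s^2+C_2s^2\exp(\alpha s^2)$, hence $\Vert u_n\Vert_{L^{2q'}(\Omega)}^2$, where the paper uses $C_1s^2+C_2|s|\exp(\alpha s^2)$, hence $\Vert u_n\Vert_{L^{q'}(\Omega)}$. Note that the paper's form already follows from \ref{f1} alone, so \ref{f9} is not actually needed at this step.
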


\begin{proof}
Assume, by contradiction, that $u_0=0$. In view of Lemma~\ref{eq:limitesu0C}~$(ii)$ we have
\begin{equation*}
\int_\Omega F(x,u_n)\,\mathrm{d}x
\rightarrow
\int_\Omega F(x,u_0)\,\mathrm{d}x
=
0.
\end{equation*}
Since $\mathcal{I}(u_n)\to c$, it follows that
\begin{equation*}
\frac12\Vert u_n\Vert_B^2
\rightarrow c
<
\frac{2\pi}{\alpha_0}.
\end{equation*}
Consequently, we may choose $\alpha>\alpha_0$, $q>1$ close
to $1$, and $\tau<4\pi$ such that
$
q\alpha\Vert u_n\Vert_B^2<\tau<4\pi
$
for $n\in\mathbb{N}$ sufficiently large. By \ref{f1} and \ref{f9} we have that,
\begin{equation} \label{eq:asin}
\vert f(x,u_n)u_n\vert
\leq
C_1\vert u_n\vert^2
+
C_2\vert u_n\vert\exp(\alpha u_n^2).
\end{equation}
Hence, by H\"older's inequality and
Lemma~\ref{eq:TMeditadaC},
\begin{equation*}
\left\vert
\int_\Omega f(x,u_n)u_n\,\mathrm{d}x
\right\vert
\leq
C_1\Vert u_n\Vert_{L^2(\Omega)}^2
+
C_3\Vert u_n\Vert_{L^{q'}(\Omega)},
\end{equation*}
where $q'=q/(q-1)$. In view of Lemma~\ref{eq:limitacaoC} and the compact Sobolev
embeddings, we have that
$
u_n\rightarrow 0,
$
strongly in $L^p(\Omega)$, for every $1\leq p<+\infty$. Therefore,
\begin{equation*}
\int_\Omega f(x,u_n)u_n\,\mathrm{d}x
\rightarrow 0, \quad \mbox{as } n\rightarrow\infty.
\end{equation*}
Since $(u_n)$ is bounded and $\mathcal{I}'(u_n)\to0$, we have
$
\mathcal{I}'(u_n)u_n=o_n(1).
$
Thus,
\begin{equation*}
\Vert u_n\Vert_B^2
=
\int_\Omega f(x,u_n)u_n\,\mathrm{d}x+o_n(1)
\rightarrow0.
\end{equation*}
Hence
\begin{equation*}
\mathcal{I}(u_n)
=
\frac12\Vert u_n\Vert_B^2
-
\int_\Omega F(x,u_n)\,\mathrm{d}x
\rightarrow0,
\end{equation*}
which contradicts $\mathcal{I}(u_n)\to c>0$. Therefore $u_0\neq0$.
\end{proof}

\begin{proposition}\label{prop:ps-saddle}
Assume that \ref{f1}, \ref{f3}, \ref{f5}, and \ref{f9}
hold. Then $\mathcal{I}$ satisfies the $(PS)_c$ condition for every
\begin{equation*}
0<c<\frac{2\pi}{\alpha_0}.
\end{equation*}
\end{proposition}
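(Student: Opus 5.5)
Take a $(PS)_c$ sequence $(u_n)_{n\in\mathbb N}$ with $0<c<2\pi/\alpha_0$. By Lemma~\ref{eq:limitacaoC}, which uses only \ref{f3} and \ref{f5}, it is bounded. So, up to a subsequence, \eqref{eq:convergenciaweakly2205} holds with some weak limit $u_0$, and $\Vert u_n\Vert_B\to\rho_0$. Lemma~\ref{eq:limitesu0C} gives $f(x,u_n)\to f(x,u_0)$ in $L^1(\Omega)$ and $\int_\Omega F(x,u_n)\,\mathrm{d}x\to\int_\Omega F(x,u_0)\,\mathrm{d}x$. Lemma~\ref{lem:nontrivial-limit-saddle} gives $u_0\neq0$, hence $\rho_0>0$.

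Next I show that $u_0$ is a critical point. For $\varphi\in C_c^\infty(\Omega)$, the $L^1$ convergence of $f(x,u_n)$, together with weak convergence in the $B$-inner product, gives $\mathcal I'(u_0)\varphi=\lim_n\mathcal I'(u_n)\varphi=0$. By density, $\mathcal I'(u_0)=0$. In particular $\Vert u_0\Vert_B^2=\int_\Omega f(x,u_0)u_0\,\mathrm{d}x$, so
\[
\mathcal I(u_0)=\int_\Omega\Bigl[\tfrac12 f(x,u_0)u_0-F(x,u_0)\Bigr]\mathrm{d}x\geq0
\]
by \ref{f5}. This replaces the use of \ref{f6} and Lemma~\ref{eq:lemadecrescenteTHC} in the ground state argument; it is the step where the missing monotonicity must be bypassed.

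Now I argue as in Proposition~\ref{lem:compactness_critical_level}. Passing to the limit in $\mathcal I(u_n)$ gives $c=\tfrac12\rho_0^2-\int_\Omega F(x,u_0)\,\mathrm{d}x$, and weak lower semicontinuity gives $\mathcal I(u_0)\le c$. Suppose $\Vert u_0\Vert_B<\rho_0$. Then
\[
\rho_0^2-\Vert u_0\Vert_B^2=2c-2\mathcal I(u_0)\le 2c<4\pi/\alpha_0 .
\]
This yields \eqref{eq:rho_tm_bound}, namely $\alpha_0\rho_0^2<4\pi/(1-\Vert u_0\Vert_B^2/\rho_0^2)$.

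Set $v_n=u_n/\Vert u_n\Vert_B$, so that $v_n\rightharpoonup v_0=u_0/\rho_0$ with $0<\Vert v_0\Vert_B<1$. Choose $q>1$ and $\alpha>\alpha_0$ close to $1$ and $\alpha_0$ respectively, so that $q\alpha\Vert u_n\Vert_B^2<\beta<4\pi/(1-\Vert v_0\Vert_B^2)$. Lemma~\ref{eq:lionsLemm} then gives $\sup_n\int_\Omega\exp(q\alpha u_n^2)\,\mathrm{d}x<\infty$. Using the growth bound \eqref{eq:asin} from \ref{f1} and \ref{f9}, namely $|f(x,s)|\le C_1|s|+C_2\exp(\alpha s^2)$, together with Hölder's inequality, I get $\int_\Omega f(x,u_n)(u_n-u_0)\,\mathrm{d}x\to0$.

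Then $o_n(1)=\mathcal I'(u_n)(u_n-u_0)=\Vert u_n\Vert_B^2-\Vert u_0\Vert_B^2+o_n(1)$. Here the compactness of $B$ handles the nonlocal term, exactly as in the earlier proof. Hence $\rho_0=\Vert u_0\Vert_B$, a contradiction. Therefore $\Vert u_n\Vert_B\to\Vert u_0\Vert_B$, and weak convergence plus convergence of norms in the Hilbert space $(H_0^1(\Omega),\langle\cdot,\cdot\rangle_B)$ gives $u_n\to u_0$ strongly.

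The main obstacle is ruling out $\mathcal I(u_0)<0$, which is what makes the estimate $\rho_0^2-\Vert u_0\Vert_B^2<4\pi/\alpha_0$ work. In the ground state case this came from \ref{f6}; here it must come from first proving $\mathcal I'(u_0)=0$ via the $L^1$ convergence of $f(x,u_n)$ and then using \ref{f5}. I will check carefully that the density argument is legitimate. The point is that $f(x,u_0)\in L^1(\Omega)$ and $\varphi$ is bounded, and the $H_0^1$ extension follows from $\mathcal I\in C^1$.
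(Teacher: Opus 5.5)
Your proposal is correct and follows essentially the same route as the paper. It uses boundedness, then the $L^1$ convergence from Lemma~\ref{eq:limitesu0C} to obtain $\mathcal{I}'(u_0)=0$ and hence $\mathcal{I}(u_0)\geq 0$ via \ref{f5}. It then uses $\rho_0^2-\Vert u_0\Vert_B^2\leq 2c<4\pi/\alpha_0$ with Lemma~\ref{eq:lionsLemm} to force $\Vert u_n\Vert_B\to\Vert u_0\Vert_B$. The differences are cosmetic: you phrase the last step as a contradiction, as in Proposition~\ref{lem:compactness_critical_level}, and you use the bound $|f(x,s)|\leq C_1|s|+C_2\exp(\alpha s^2)$, which is the form actually needed to estimate $\int_\Omega f(x,u_n)(u_n-u_0)\,\mathrm{d}x$.
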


\begin{proof}
Let $(u_n)_{n\in\mathbb N}\subset H_0^1(\Omega)$ be a
$(PS)_c$ sequence. By Lemma~\ref{eq:limitacaoC}, $(u_n)_{n \in \mathbb{N}}$ is
bounded in $H_0^1(\Omega)$. Hence, up to a subsequence, there
exist $u_0\in H_0^1(\Omega)$ and $\rho_0\geq0$ such that
$
u_n\rightharpoonup u_0
$
weakly in $H_0^1(\Omega)$
and
$
\Vert u_n\Vert_B\rightarrow\rho_0,
$
as $n\rightarrow\infty$. Moreover, by Lemma~\ref{eq:limitesu0C}, we have
\begin{equation*}
f(x,u_n)\rightarrow f(x,u_0),
\quad\text{strongly in }L^1(\Omega)
\end{equation*}
and
\begin{equation*}
\int_\Omega F(x,u_n)\,\mathrm{d}x
\rightarrow
\int_\Omega F(x,u_0)\,\mathrm{d}x, \quad \mbox{as } n\rightarrow\infty.
\end{equation*}
It follows from Lemma~\ref{lem:nontrivial-limit-saddle} that $u_0\neq0$. We first prove that $\mathcal{I}'(u_0)=0$. Let
$\varphi\in C_c^\infty(\Omega)$. Since $\mathcal{I}'(u_n)\to0$, we
have
\begin{equation*}
\langle u_n,\varphi\rangle_B
-
\int_\Omega f(x,u_n)\varphi\,\mathrm{d}x
=
o_n(1).
\end{equation*}
Passing to the limit and using
Lemma~\ref{eq:limitesu0C}~$(i)$, we obtain
\begin{equation*}
\langle u_0,\varphi\rangle_B
=
\int_\Omega f(x,u_0)\varphi\,\mathrm{d}x.
\end{equation*}
Since $\mathcal{I}\in C^1(H_0^1(\Omega),\mathbb{R})$, we have
$\mathcal{I}'(u_0)\in H^{-1}(\Omega)$. Hence, by the density of
$C_c^\infty(\Omega)$ in $H_0^1(\Omega)$, it follows that
$\mathcal{I}'(u_0)=0$ in $H^{-1}(\Omega)$.
It remains to prove that $u_n\to u_0$ strongly in
$H_0^1(\Omega)$. Since $\mathcal{I}'(u_0)=0$, assumption \ref{f5}
gives
\begin{align*}
\mathcal{I}(u_0)
&=
\mathcal{I}(u_0)-\frac12\mathcal{I}'(u_0)u_0
\\
&=
\int_\Omega
\left[
\frac12f(x,u_0)u_0-F(x,u_0)
\right]\mathrm{d}x
\geq0.
\end{align*}
Furthermore,
\begin{equation*}
c
=
\frac12\rho_0^2
-
\int_\Omega F(x,u_0)\,\mathrm{d}x,
\end{equation*}
and therefore
\begin{equation*}
\rho_0^2-\Vert u_0\Vert_B^2
=
2\bigl(c-\mathcal{I}(u_0)\bigr)
\leq2c
<
\frac{4\pi}{\alpha_0}.
\end{equation*}
At this point, the argument used in the proof of
Proposition~\ref{lem:compactness_critical_level} can be repeated,
with \eqref{eq:asin} replacing \eqref{eq:TH3C}. Consequently,
\begin{equation*}
\int_\Omega
f(x,u_n)(u_n-u_0)\,\mathrm{d}x
\rightarrow 0,
\quad \mbox{as } n\rightarrow\infty.
\end{equation*}
As $\mathcal{I}'(u_n)\to0$ and $(u_n-u_0)$ is bounded in
$H_0^1(\Omega)$, we have
\begin{equation*}
\mathcal{I}'(u_n)(u_n-u_0)=o_n(1).
\end{equation*}
Hence,
\begin{equation*}
\langle u_n,u_n-u_0\rangle_B=o_n(1).
\end{equation*}
Using $u_n\rightharpoonup u_0$, we conclude that
\[
\Vert u_n\Vert_B^2
\rightarrow
\Vert u_0\Vert_B^2,
\quad \mbox{as } n\rightarrow\infty.
\]
Thus,
\[
u_n\rightarrow u_0,
\quad \mbox{strongly in }H_0^1(\Omega).
\]
Therefore, $\mathcal{I}$ satisfies the $(PS)_c$ condition.
\end{proof}

\subsection{Proof of Theorem~\ref{eq:teoremadoissaddle}}

Fix $n\in\mathbb N$ as provided by
Lemma~\ref{eq:lemadonivelsaddle}. By the linking construction,
the functional $\mathcal{I}$ possesses a $(PS)_{\hat{c}_n}$ sequence, where
$
0<\hat{c}_n<2\pi/\alpha_0.
$
In view of Proposition~\ref{prop:ps-saddle} we obtain a sequence such that 
$
u_j\rightarrow u
$
strongly in $H_0^1(\Omega)$, for some $u\in H_0^1(\Omega)$. Consequently,
$
\mathcal{I}'(u)=0
$
and
$
\mathcal{I}(u)=\hat{c}_n>0.
$
In particular, $u\neq0$, and hence $u$ is a nontrivial weak
solution of \eqref{nonlocalproblem}. Finally, define
$
v=L[u],
$
or equivalently,
$
\delta v=B[u].
$
By the resolvent correspondence established in Section \ref{sec2C} the pair
$(u,v)$ is a nontrivial weak solution of system \eqref{SC}.

\begin{remark}\label{rem:two-solutions}
(On the nature of the two solutions.) Theorems~\ref{eq:mudouagoraC} and~\ref{eq:teoremadoissaddle} do not provide
two solutions for the same nonlinearity. They concern disjoint classes of
nonlinearities and are proved by means of different variational arguments.
In the ground state case, condition~\ref{f2} implies that
\[
F(x,s)=o(s^{2}) \quad \text{as } s\to 0,
\]
so that the quadratic part of $\mathcal{I}$ dominates near the origin.
Consequently, $u=0$ is a strict local minimum of $\mathcal{I}$. The solution
is then obtained through the mountain pass geometry and is characterized as
a least-energy critical point, namely,
\[
\mathcal{I}(u_{0})=c_{S}.
\]

In the saddle point case, this local behavior cannot occur. Indeed,
assumption~\ref{f8} requires
\[
F(x,s)\geq \frac{1}{2}(\hat\lambda_{k}+\sigma)s^{2}
\]
for every $s\in\mathbb{R}$, where $\hat\lambda_{k}+\sigma>0$, and is therefore
incompatible with condition~\ref{f2}. In this case, the origin is a saddle
point. More precisely, assumptions~\ref{f7}-\ref{f8} imply that
$\mathcal{I}$ is positive on a sufficiently small sphere of $W_{k}$ and
negative on $V_{k}\setminus\{0\}$, where
\[
V_{k}=\operatorname{span}\{\varphi_{1},\dots,\varphi_{k}\}.
\]
Thus, $V_{k}$ provides a $k$-dimensional family of descending directions
from the origin. The solution is obtained at a linking level
$\hat c_{n}>0$, associated with a minimax construction in the
$(\dim V_{k}+1)$-dimensional space generated by $V_{k}$ and the projected
Moser function. In contrast with the solution given by
Theorem~\ref{eq:mudouagoraC}, the argument does not yield a least-energy
characterization.

The monotonicity condition~\ref{f6}, which is essential for the comparison
between the mountain pass and Nehari levels in
Theorem~\ref{eq:mudouagoraC}, is not used in
Theorem~\ref{eq:teoremadoissaddle}. In both cases, a critical point of the
reduced problem~\eqref{nonlocalproblem} gives rise to a solution of the
FitzHugh-Nagumo system~\eqref{SC} through the resolvent relation
$\delta v=B[u]$. Nevertheless, the two theorems apply to different
assumptions on the reaction term $f$ and rely on different variational
constructions. 
\end{remark}
\vspace{0.5cm}

\noindent \textbf{Acknowledgments.}  J. M. do \'O acknowledges partial support from Conselho Nacional de Desenvolvimento Científico e Tecnológico (CNPq) through grants 312340/2021-4, 409764/2023-0, 443594/2023-6, 312435/2026-6, CAPES MATH AMSUD grant 88887.878894/2023-00 and Para\'iba State Research Foundation (FAPESQ), grant no. 3034/2021. J. C. de Albuquerque acknowledges partial support from CNPq through grant 301849/2025-0.  H. H. C. e Silva acknowledges support from Para\'iba State Research Foundation (FAPESQ) through a postdoctoral fellowship, grant no. 740/2026.




\bibliography{references.bib}
\bibliographystyle{abbrv}

\end{document}